\newtheorem{theorem}{Theorem}[section]
\newtheorem{lemma}[theorem]{Lemma}
\newtheorem{proposition}[theorem]{Proposition}
\newtheorem{corollary}[theorem]{Corollary}
\theoremstyle{plain}
\theoremstyle{definition}
\newtheorem{remark}[theorem]{Remark}
\numberwithin{equation}{section}
\renewcommand{\labelenumi}{\textup{(\theenumi)}}
\newcommand{\Homeo}{\operatorname{Homeo}}
\newcommand{\id}{\operatorname{id}}
\newcommand{\Ker}{\operatorname{Ker}}
\newcommand{\sgn}{\operatorname{sgn}}
\newcommand{\Ad}{\operatorname{Ad}}
\def\det{{{\operatorname{det}}}}
\newcommand{\N}{\mathbb{N}}
\newcommand{\R}{\mathbb{R}}
\newcommand{\RZ}{\mathbb{R}/{\mathbb{Z}}}
\newcommand{\Z}{\mathbb{Z}}
\newcommand{\Zp}{{\mathbb{Z}}_+}
\newcommand{\f}{\tilde{f}}
\title{Continuous orbit equivalence,
flow equivalence  of Markov shifts 
and circle actions on Cuntz--Krieger algebras}
\author{Kengo Matsumoto \\
Department of Mathematics \\
Joetsu University of Education \\
Joetsu, 943-8512, Japan
}
\date{}
\begin{document}
\maketitle

\def\det{{{\operatorname{det}}}}

\begin{abstract}
We will study circle actions 
on Cuntz--Krieger algebras 
trivially acting on its canonical maximal 
abelian $C^*$-subalgebras from the view points of 
continuous orbit equivalence of one-sided topological Markov shifts
and  
flow equivalence of two-sided topological Markov shifts.
\end{abstract}




\def\OA{{{\mathcal{O}}_A}}
\def\OB{{{\mathcal{O}}_B}}
\def\OZ{{{\mathcal{O}}_Z}}
\def\OTA{{{\mathcal{O}}_{\tilde{A}}}}
\def\SOA{{{\mathcal{O}}_A}\otimes{\mathcal{K}}}
\def\SOB{{{\mathcal{O}}_B}\otimes{\mathcal{K}}}
\def\SOZ{{{\mathcal{O}}_Z}\otimes{\mathcal{K}}}
\def\SOTA{{{\mathcal{O}}_{\tilde{A}}\otimes{\mathcal{K}}}}
\def\DA{{{\mathcal{D}}_A}}
\def\DB{{{\mathcal{D}}_B}}
\def\DZ{{{\mathcal{D}}_Z}}
\def\DTA{{{\mathcal{D}}_{\tilde{A}}}}
\def\SDA{{{\mathcal{D}}_A}\otimes{\mathcal{C}}}
\def\SDB{{{\mathcal{D}}_B}\otimes{\mathcal{C}}}
\def\SDZ{{{\mathcal{D}}_Z}\otimes{\mathcal{C}}}
\def\SDTA{{{\mathcal{D}}_{\tilde{A}}\otimes{\mathcal{C}}}}
\def\Max{{{\operatorname{Max}}}}
\def\Per{{{\operatorname{Per}}}}
\def\PerB{{{\operatorname{PerB}}}}
\def\Homeo{{{\operatorname{Homeo}}}}
\def\HA{{{\frak H}_A}}
\def\HB{{{\frak H}_B}}
\def\HSA{{H_{\sigma_A}(X_A)}}
\def\Out{{{\operatorname{Out}}}}
\def\Aut{{{\operatorname{Aut}}}}
\def\Ad{{{\operatorname{Ad}}}}
\def\Inn{{{\operatorname{Inn}}}}
\def\det{{{\operatorname{det}}}}
\def\exp{{{\operatorname{exp}}}}
\def\cobdy{{{\operatorname{cobdy}}}}
\def\Ker{{{\operatorname{Ker}}}}
\def\ind{{{\operatorname{ind}}}}
\def\id{{{\operatorname{id}}}}
\def\supp{{{\operatorname{supp}}}}
\def\co{{{\operatorname{co}}}}
\def\Sco{{{\operatorname{Sco}}}}
\def\ActA{{{\operatorname{Act}_{\DA}(\mathbb{R}/{\mathbb{Z}},\OA)}}}
\def\ActB{{{\operatorname{Act}_{\DB}(\mathbb{R}/{\mathbb{Z}},\OB)}}}
\def\RepOA{{{\operatorname{Rep}(\mathbb{R}/{\mathbb{Z}},\OA)}}}
\def\RepDA{{{\operatorname{Rep}(\mathbb{R}/{\mathbb{Z}},\DA)}}}
\def\RepDB{{{\operatorname{Rep}(\mathbb{R}/{\mathbb{Z}},\DB)}}}
\def\U{{{\mathcal{U}}}}

\section{Introduction and Preliminaries}

For an $N\times N$ irreducible matrix
$A=[A(i,j)]_{i,j=1}^N$ 
with entries in $\{0,1\}$,
let us denote by 
$X_A$ the shift space 
\begin{equation}
X_A = \{ (x_n )_{n \in \N} \in \{1,\dots,N \}^{\N}
\mid
A(x_n,x_{n+1}) =1 \text{ for all } n \in {\N}
\} \label{eq:Markovshift}
\end{equation}
of the one-sided topological Markov shift for $A$.
We assume that 
$1< N \in {\N}$
and
$A$ is not any permutation matrix.
Hence 
$A$  satisfies condition (I) in the sense of Cuntz--Krieger \cite{CK}.
We endow $X_A$ with the relative topology 
of the product topology
on $\{1,\dots,N\}^{\N}$, so that 
$X_A$ is a compact Hausdorff space  
and homeomorphic to the Cantor discontinuum.
Let us denote by
$\sigma_A$ 
the shift map on $X_A$ which is defined by 
$\sigma_{A}((x_n)_{n \in {\N}})=(x_{n+1} )_{n \in \N}$.
It is a continuous surjection on $X_A$.
The topological dynamical system 
$(X_A, \sigma_A)$ is called the one-sided topological Markov shift for $A$.
The topological Markov shifts are often called 
shifts of finite type (cf. \cite{Kitchens}, \cite{LM}).
The two-sided topological Markov shift
is similarly defined as a topological 
dynamical system
$(\bar{X}_A, \bar{\sigma}_A)$
on the set $\bar{X}_A$
of two-sided
sequences 
$(x_n)_{n \in \Z}$ 
instead of one-sided sequences 
$(x_n )_{n \in \N}$ in \eqref{eq:Markovshift}
with the shift homeomorphism
$\bar{\sigma}_{A}((x_n)_{n \in {\Z}})=(x_{n+1} )_{n \in \Z}$
on 
$\bar{X}_A$.
J. Cuntz and W. Krieger have introduced a $C^*$-algebra 
$\OA$ associated to topological Markov shift $(X_A,\sigma_A)$
(\cite{CK}).
The $C^*$-algebra is called the Cuntz--Krieger algebra,
which is a universal unique $C^*$-algebra generated by
partial isometries
$S_1,\dots,S_N$
subject to the relations:
\begin{equation} 
\sum_{j=1}^N S_j S_j^* = 1, \qquad
S_i^* S_i = \sum_{j=1}^N A(i,j) S_jS_j^*, \quad i=1,\dots,N. \label{eq:CK}
\end{equation} 
For $t \in \RZ$,
the correspondence
$S_i \rightarrow e^{2 \pi\sqrt{-1}t}S_i,
\, i=1,\dots,N$
gives rise to an automorphism
of $\OA$ denoted by
$\rho^A_t$.
The automorphisms
$\rho^A_t, t \in \RZ$
yield an action of the circle group $\RZ$
on $\OA$ called the gauge action.
Let us denote by $\DA$ the $C^*$-subalgebra of $\OA$
generated by the projections of the form:
$S_{i_1}\cdots S_{i_n}S_{i_n}^* \cdots S_{i_1}^*,
i_1,\dots,i_n =1,\dots,N$.
The subalgebra $\DA$ is canonically isomorphic to the commutative 
$C^*$-algebra $C(X_A)$
of the complex valued continuous functions on $X_A$
by identifying the projection
$S_{i_1}\cdots S_{i_n}S_{i_n}^* \cdots S_{i_1}^*
$
with the characteristic function
$\chi_{U_{i_1\cdots i_n}} \in C(X_A)$
of the cylinder set
$U_{i_1\cdots i_n}$
for the word
${i_1\cdots i_n}$.
Cuntz and Krieger have proved that if one-sided topological Markov shifts $(X_A,\sigma_A)$ and   
$(X_B,\sigma_B)$ are topologically conjugate,
then there exists an isomorphism 
$\varPhi: \OA\rightarrow \OB$
such that
$\varPhi(\DA) = \DB$
and
$\varPhi\circ \rho^A_t = \rho^B_t \circ \varPhi, t \in \RZ$
(\cite[2.17 Proposition]{CK}).

Continuous orbit equivalence of one-sided topological Markov shifts
is a weaker equivalence relation than one-sided topological conjugacy
 and
 gives rise to isomorphic Cuntz--Krieger algebras (\cite{MaPacific}).
Let $A$ and $B$ be irreducible square matrices with entries in $\{0,1 \}$.
One-sided topological Markov shifts
$(X_A, \sigma_A)$ and $(X_B,\sigma_B)$ 
are said to be continuously orbit equivalent 
if there exist a homeomorphism
$h: X_A \rightarrow X_B$ 
and continuous functions 
$k_1,l_1: X_A\rightarrow \Zp,
 k_2,l_2: X_B\rightarrow \Zp 
$
such that
\begin{align}
\sigma_B^{k_1(x)} (h(\sigma_A(x))) 
& = \sigma_B^{l_1(x)}(h(x))
\quad 
\text{ for} \quad 
x \in X_A,  \label{eq:orbiteq1x} \\
\sigma_A^{k_2(y)} (h^{-1}(\sigma_B(y))) 
& = \sigma_A^{l_2(y)}(h^{-1}(y))
\quad 
\text{ for } \quad 
y \in X_B. \label{eq:orbiteq2y}
\end{align}
In \cite{MMKyoto}, it has been proved that 
the following three assertions are equivalent
(cf. \cite{MaPacific}, \cite{MaPAMS}, 
\cite{MatuiPLMS}, \cite{MatuiPre2012}):
\begin{enumerate}
\renewcommand{\theenumi}{\roman{enumi}}
\renewcommand{\labelenumi}{\textup{(\theenumi)}}
\item
 $(X_A, \sigma_A)$ and $(X_B,\sigma_B)$ 
are continuously orbit equivalent.
\item There exists an isomorphism $\Phi:\mathcal{O}_A\to\mathcal{O}_B$ 
such that $\Phi(\mathcal{D}_A)=\mathcal{D}_B$. 
\item $\mathcal{O}_A$ and $\mathcal{O}_B$ 
are isomorphic and $\sgn(\det(\id-A))= \sgn(\det(\id-B))$. 
\end{enumerate}

Let $g$ be a strictly positive continuous function on $\bar{X}_A$.
Denote by 
$\bar{X}_A^g$
the compact Hausdorff space obtained from
\begin{equation*}
\{ (\bar{x}, r ) \in \bar{X}_A\times\R \mid \bar{x}\in\bar{X}_A,
0 \le r \le g(x) \} 
\end{equation*}
by identifying
$(\bar{x}, g(\bar{x}))$
and
$(\bar{\sigma}_A(\bar{x}),0)$
for each $\bar{x} \in \bar{X}_A$.
Let
$
\bar{\sigma}_{A,t}^g, t \in \R
$
be the flow on
$\bar{X}_A^g$
defined by 
\begin{equation*}
\bar{\sigma}_{A,t}^g([(\bar{x},r)]) 
=[(\bar{x},r+t)]
\quad
\text{ for }
[(\bar{x},r)] \in \bar{X}_A^g.
\end{equation*}
The dynamical system
$(\bar{X}_A^g, \bar{\sigma}_A^g)$
is called the suspension flow of 
$(\bar{X}_A, \bar{\sigma}_A)$
by ceiling function $g$.
Two-sided topological Markov shifts
$(\bar{X}_A, \bar{\sigma}_A)$
and
$(\bar{X}_B, \bar{\sigma}_B)$
are said to be flow equivalent if 
for a strictly positive continuous function $g$ on $\bar{X}_A$
there exists 
a strictly positive continuous function $\tilde{g}$ on $\bar{X}_B$
such that 
$(\bar{X}_A^g, \bar{\sigma}_A^g)$
is topologically conjugate to
$(\bar{X}_B^{\tilde{g}}, \bar{\sigma}_B^{\tilde{g}})$.
Throughout the paper,
denote by
${\mathcal{K}}$
the $C^*$-algebra of compact operators on 
the separable infinite dimensional Hilbert space
$\ell^2(\N)$
and
${\mathcal{C}}$
its maximal abelian  $C^*$-subalgebra consisting of diagonal elements on
$\ell^2(\N)$.
We have seen that 
the following three assertions are equivalent
(\cite{MMKyoto}, \cite{Franks}, \cite{Ro}
cf. \cite{BF}, \cite{CK}, \cite{PS}) :
\begin{enumerate}
\renewcommand{\theenumi}{\roman{enumi}}
\renewcommand{\labelenumi}{\textup{(\theenumi)}}
\item
 $(\bar{X}_A, \bar{\sigma}_A)$ and 
 $(\bar{X}_B, \bar{\sigma}_B)$ 
are flow equivalent.
\item There exists an isomorphism 
$\bar{\Phi}:\SOA\to\SOB$ 
such that $\bar{\Phi}(\SDA)=\SDB$. 
\item $\SOA$ and $\SOB$ 
are isomorphic and $\sgn(\det(\id-A))= \sgn(\det(\id-B))$. 
\end{enumerate}
Hence the continuous orbit equivalence of one-sided topological Markov shifts
is regarded as a one-sided counterpart of 
the flow equivalence of two-sided topological Markov shifts.

Let us denote by
$(\bar{H}^A,\bar{H}^A_+)$
the ordered cohomology group 
for the two-sided topological Markov shift $(\bar{X}_A,\bar{\sigma}_A)$,
which is defined as the quotient group of the ordered abelian group
$C(\bar{X}_A,{\mathbb{Z}})$ of all ${\mathbb{Z}}$-valued continuous functions 
on $\bar{X}_A$ by the subgroup
$\{ \xi - \xi \circ \bar{\sigma}_A \mid \xi \in C(\bar{X}_A,{\mathbb{Z}}) \}$.
The classes of nonnegative functions in $C(\bar{X}_A,{\mathbb{Z}})$ 
form the positive cone $\bar{H}^A_+$ 
(cf. \cite{BH}, \cite{Po}).
The ordered cohomology group
$(H^A,H^A_+)$
for the one-sided topological Markov shift $(X_A,\sigma_A)$
is similarly defined (\cite{MMKyoto}).
The latter ordered group  
$(H^A,H^A_+)$ is naturally isomorphic to the former one  
$(\bar{H}^A,\bar{H}^A_+)$ (\cite[Lemma 3.1]{MMKyoto}).
In \cite{BH}, Boyle--Handelman have proved that 
the ordered group
$(\bar{H}^A,\bar{H}^A_+)$ 
is a complete invariant for  flow equivalence  of 
two-sided topological Markov shift
$(\bar{X}_A, \bar{\sigma}_A)$.
Suppose that $(X_A, \sigma_A)$ and $(X_B,\sigma_B)$ 
are continuously orbit equivalent
via a homeomorphism $h:X_A\rightarrow X_B$
satisfying \eqref{eq:orbiteq1x} and \eqref{eq:orbiteq2y}.
The homomorphism
$\Psi_h: C(X_B,\Z) \rightarrow C(X_A,\Z)$
defined by
\begin{equation}
\Psi_{h}(f)(x)
= \sum_{i=0}^{l_1(x)-1} f(\sigma_B^i(h(x))) 
- \sum_{j=0}^{k_1(x)-1} f(\sigma_B^j(h(\sigma_A(x))))
\label{eq:Psihfx}
\end{equation}
for
$f \in C(X_B,\Z), \ x \in X_A$
and its inverse 
$\Psi_{h^{-1}}: C(X_A,\Z) \rightarrow C(X_B,\Z)$
give rise to isomorphisms of abelian groups
between $C(X_A,\Z)$ and $C(X_B,\Z)$.
They furthermore 
induce isomorphisms between  
$(H^A,H^A_+)$ and 
$(H^B,H^B_+)$
as ordered groups
(\cite{MMPre2014}).

In this paper, we will study actions of  the circle group 
$\RZ$ on the Cuntz--Krieger algebras 
from the view points of 
continuous orbit equivalence of one-sided topological Markov shifts 
and
flow equivalence of two-sided topological Markov shifts.
Let us denote by $\ActA$ 
the set of actions of 
${\RZ}$ on $\OA$  
 trivially acting on the maximal abelian $C^*$-subalgebra 
$\DA$.
For $f \in C(X_A, \Z)$,
define the one-parameter unitary group
$U_t(f), t \in \RZ$ in $\DA$
by 
\begin{equation}
U_t(f) = \exp({2\pi \sqrt{-1} t f}), \label{eq:utf}
\end{equation}
and an automorphism
$\rho_t^{A,f}$ on $\OA$ for each $t \in \RZ$ 
by 
\begin{equation}
\rho_t^{A,f}(S_i) = U_t(f) S_i, \qquad i=1,\dots,N. \label{eq:rhotf}
\end{equation}
We know that 
$\rho_t^{A,f}$ belongs to
$\ActA$.
For $\alpha \in \ActA$,
a continuous map
$t \in \RZ \rightarrow u_t \in \OA$
is called a unitary one-cocycle relative to $\alpha$ 
if $u_t, t \in \RZ$ are unitaries
and satisfy 
$u_{t+s} = u_t \alpha_t(u_s), t,s \in \RZ$. 
For
$\alpha^1, \alpha^2 \in \ActA$,
we write
$\alpha^1\sim \alpha^2$
if there exists a unitary one-cocycle 
$u_t \in \OA$ relative to 
$\alpha^2$ 
such that
$\alpha^1_t = \Ad u_t \circ \alpha^2_t$
for all $t \in \RZ$.
In Section 2, 
we show that the set $\ActA$ has a natural structure of ordered group.
We will then see that
the map
$f \in C(X_A,\Z) \rightarrow \rho^{A,f} \in \ActA$
gives rise to an isomorphism of groups and induces
an isomorphism
from $H^A$ onto $\ActA/\sim$ as ordered groups
(Proposition \ref{prop:HACT}).
The action $\rho^{A,1}$ for the constant function $1$
is the gauge action $\rho^A$
which is an order unit of $\ActA$.
In Section 3,
we will see that 
the map
$\Psi_h:C(X_B,\Z) \rightarrow C(X_A,\Z)$
defined in 
\eqref{eq:Psihfx}
controls actions of $\RZ$ on the Cuntz--Krieger algebras
trivially acting  on its maximal abelian $C^*$-subalgebras
as in the following theorem.
\begin{theorem}[{Theorem \ref{thm:COETFAE}}]
Suppose that the one-sided topological Markov shifts
$(X_A, \sigma_A)$ and $(X_B,\sigma_B)$
are continuously orbit equivalent via a homeomorphism 
$h:X_A\rightarrow X_B$.
Then there exists an isomorphism
$\Phi:\OA \rightarrow \OB$ satisfying the following conditions
\begin{enumerate}
\renewcommand{\theenumi}{\roman{enumi}}
\renewcommand{\labelenumi}{\textup{(\theenumi)}}
\item
$\Phi(\DA) = \DB$,
\item
$
\Phi \circ \rho^{A,f}_t = \rho^{B,\Psi_{h^{-1}}(f)}_t \circ \Phi
\text{ for }
f \in C(X_A,\Z), \,\, 
 t \in \RZ,
$
\item 
$\Psi_{h^{-1}}:C(X_A,\Z) \rightarrow C(X_B,\Z)$ 
induces 
an isomorphism from  
$H^A$
to  $H^B$
as ordered abelian groups.
\end{enumerate}
\end{theorem}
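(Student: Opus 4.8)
The plan is to exhibit $\Phi$ through the \'etale groupoid realization of the Cuntz--Krieger algebras and to track the cocycles that implement the two actions. Write $\OA = C^*(G_A)$, where $G_A = \{(x,m-n,y)\in X_A\times\Z\times X_A : \sigma_A^m x = \sigma_A^n y,\ m,n\in\Zp\}$ is the groupoid attached to $(X_A,\sigma_A)$, with unit space $X_A$ and $\DA = C(X_A)$; likewise for $B$. The implication (i)$\Rightarrow$(ii) recalled in the introduction is realized by a topological groupoid isomorphism $\varphi\colon G_A\to G_B$ whose restriction to unit spaces is $h$, and the induced $*$-isomorphism $\Phi=\varphi_*$, $\Phi(\xi)=\xi\circ\varphi^{-1}$, then automatically carries $\DA=C(X_A)$ onto $\DB=C(X_B)$. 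This yields (i). I expect the genuine work hidden here to be the continuity and invertibility of $\varphi$: these rest on the functions $k_1,l_1,k_2,l_2$ being continuous, hence locally constant, the relations \eqref{eq:orbiteq1x} and \eqref{eq:orbiteq2y} providing $\varphi$ and $\varphi^{-1}$ symmetrically.

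Next I would recognize each $\rho^{A,f}$ as a cocycle action. For $f\in C(X_A,\Z)$ set $c^A_f(x,m-n,y)=\sum_{i=0}^{m-1} f(\sigma_A^i x)-\sum_{j=0}^{n-1} f(\sigma_A^j y)$. A short check, using that $\sigma_A^{m'}x=\sigma_A^{n'}y$ with $m'-n'=m-n$ forces the two Birkhoff sums to agree termwise past the common tail, shows $c^A_f$ is a well-defined continuous $\Z$-valued cocycle, and that the gauge-type action $\xi\mapsto e^{2\pi\sqrt{-1}t\,c^A_f}\xi$ agrees on generators with $\rho^{A,f}$ as defined in \eqref{eq:utf}, \eqref{eq:rhotf}; this is the mechanism behind Proposition \ref{prop:HACT}. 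Under this dictionary, since $\Phi=\varphi_*$, condition (ii) is equivalent to the single cocycle identity $c^A_f = c^B_{\Psi_{h^{-1}}(f)}\circ\varphi$ on $G_B$.

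The heart of the argument is this identity, which I would verify on the generating shift morphisms $\tau_x=(x,1,\sigma_A x)$. Every element of $G_A$ is a product of such $\tau_x$, their inverses, and units, and a continuous cocycle is determined by its values on the $\tau_x$; so it suffices to match there. One has $c^A_f(\tau_x)=f(x)$. The orbit relation \eqref{eq:orbiteq1x} reads $\sigma_B^{l_1(x)}h(x)=\sigma_B^{k_1(x)}h(\sigma_A x)$, so $\varphi(\tau_x)=(h(x),\,l_1(x)-k_1(x),\,h(\sigma_A x))$, whence for any $g$, $c^B_g(\varphi(\tau_x))=\sum_{i=0}^{l_1(x)-1} g(\sigma_B^i h(x))-\sum_{j=0}^{k_1(x)-1} g(\sigma_B^j h(\sigma_A x))=\Psi_h(g)(x)$ by \eqref{eq:Psihfx}. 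Taking $g=\Psi_{h^{-1}}(f)$ and using $\Psi_{h^{-1}}=\Psi_h^{-1}$ (recorded in the introduction) gives $c^B_{\Psi_{h^{-1}}(f)}(\varphi(\tau_x))=f(x)=c^A_f(\tau_x)$, which establishes (ii). The main obstacle is precisely the legitimacy of ``determined on the $\tau_x$'': it requires the generation statement for $G_A$ together with the well-definedness check above, after which the index matching in \eqref{eq:Psihfx} is routine. The same computation can be carried out concretely on the generators $S_i$ if one prefers to avoid groupoids.

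Finally, for (iii): $\Psi_{h^{-1}}$ is a group isomorphism $C(X_A,\Z)\to C(X_B,\Z)$, and the cocycle picture shows it sends coboundaries $\xi-\xi\circ\sigma_A$ to coboundaries, so it descends to an isomorphism $H^A\to H^B$; preservation of the positive cone $H^A_+$ is the ordered-group statement recalled from \cite{MMPre2014}. Alternatively, one obtains (iii) structurally from (ii): conjugation by $\Phi$ is an ordered-group isomorphism $\ActA/\!\sim\ \to\ \ActB/\!\sim$, and under the identifications $H^A\cong\ActA/\!\sim$ and $H^B\cong\ActB/\!\sim$ of Proposition \ref{prop:HACT} it coincides with the map induced by $\Psi_{h^{-1}}$, so the latter is an isomorphism of ordered abelian groups.
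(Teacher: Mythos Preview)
Your argument is correct and takes a genuinely different route from the paper's. The paper works in a concrete Hilbert space model: it represents $\OA$ on $\ell^2(X_A)$ via \eqref{eq:siAex}, defines the unitary $U_h e_x^A=e_{h(x)}^B$, verifies by hand that $\Phi=\Ad(U_h)$ carries $\OA$ onto $\OB$ (reproducing \cite[Proposition~5.6]{MaPacific}), and then checks the intertwining relation $\rho^{B,f}_t\circ\Phi=\Phi\circ\rho^{A,\Psi_h(f)}_t$ by an explicit computation on basis vectors $e_y^B$, tracking where each term $S_\nu^B{S_\xi^B}^*Q_\nu^{(i)}$ sends them and matching the resulting exponential with $\Psi_h(f)$. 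Your groupoid/cocycle reformulation packages the same content more conceptually: once $\rho^{A,f}$ is recognized as multiplication by $e^{2\pi\sqrt{-1}t\,c^A_f}$ and $\Phi$ as the pushforward along a groupoid isomorphism $\varphi$, condition (ii) reduces to the single identity $c^B_g(\varphi(\tau_x))=\Psi_h(g)(x)$, which is literally the defining formula \eqref{eq:Psihfx}. The paper's approach is self-contained, avoids the groupoid $C^*$-algebra machinery, and yields an explicit expression for $\Phi(S_i^A)$; yours makes the appearance of $\Psi_h$ transparent (it \emph{is} the transported cocycle) and would extend immediately to other \'etale groupoid settings. One small slip: your cocycle identity should be stated ``on $G_A$'' rather than ``on $G_B$'', since both sides are functions on $G_A$.
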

If one may take 
$l_1(x) = k_1(x) +1, x \in X_A$
in \eqref{eq:orbiteq1x}
and
$l_2(y) = k_2(y) +1, y \in X_B$
in \eqref{eq:orbiteq2y},
then 
$(X_A,\sigma_A)$ and $(X_B,\sigma_B)$
are said to be {\it eventually one-sided conjugate\/}.
By using above theorem, we then obtain the following result.
\begin{theorem}[{Corollary \ref{coro:evstcoe}}]
There exists an isomorphism
$\Phi:\OA \rightarrow \OB$ 
such that 
\begin{equation*}
\Phi(\DA) = \DB
\quad 
\text{  and }
\quad
\Phi \circ \rho^A_t = \rho^B_t \circ \Phi, 
\qquad t \in {\RZ}
\end{equation*}
if and only if 
$(X_A, \sigma_A)$ and $(X_B,\sigma_B)$
are eventually one-sided conjugate.
\end{theorem}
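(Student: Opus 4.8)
The plan is to derive both implications from the correspondence $f\mapsto\rho^{A,f}$ of Proposition \ref{prop:HACT} together with the intertwining relation of Theorem \ref{thm:COETFAE}, the crux being the two elementary identities $\Psi_{h^{-1}}(1) = l_2 - k_2$ and $\Psi_h(1) = l_1 - k_1$, which follow by collapsing the telescoping sums in \eqref{eq:Psihfx} at the constant function $1$.

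For the \emph{if} direction, I assume eventual one-sided conjugacy, realized by a homeomorphism $h$ with $l_1 = k_1+1$ and $l_2 = k_2+1$. Since this is in particular a continuous orbit equivalence, Theorem \ref{thm:COETFAE} furnishes an isomorphism $\Phi$ with $\Phi(\DA)=\DB$ and $\Phi\circ\rho^{A,f}_t = \rho^{B,\Psi_{h^{-1}}(f)}_t\circ\Phi$ for all $f$. Evaluating the defining formula of $\Psi_{h^{-1}}$ at the constant function $1$ gives $\Psi_{h^{-1}}(1)(y) = l_2(y) - k_2(y) = 1$, so $\Psi_{h^{-1}}(1)$ is again the constant function $1$. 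Taking $f=1$ in the intertwining relation and recalling $\rho^{A,1}=\rho^A$ and $\rho^{B,1}=\rho^B$ then yields $\Phi\circ\rho^A_t = \rho^B_t\circ\Phi$, as required.

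For the \emph{only if} direction, I suppose $\Phi(\DA)=\DB$ and $\Phi\circ\rho^A_t = \rho^B_t\circ\Phi$. By the equivalence (ii)$\Rightarrow$(i) recalled in the Introduction the shifts are continuously orbit equivalent; I take the homeomorphism $h$ and the functions $k_i,l_i$ determined by $\Phi$, so that the relation $\Phi\circ\rho^{A,f}_t = \rho^{B,\Psi_{h^{-1}}(f)}_t\circ\Phi$ applies to this very $\Phi$. Putting $f\equiv 1$ gives $\Phi\circ\rho^A_t = \rho^{B,\,l_2-k_2}_t\circ\Phi$, while the hypothesis gives $\Phi\circ\rho^A_t = \rho^{B,1}_t\circ\Phi$; cancelling $\Phi$ and invoking the injectivity of $f\mapsto\rho^{B,f}$ from Proposition \ref{prop:HACT} forces $l_2 - k_2 = 1$. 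Applying $\Phi^{-1}$ to the intertwining relation and repeating the argument on the $A$-side, where the relevant function is $\Psi_h(1) = l_1 - k_1$, forces $l_1 - k_1 = 1$. Hence the orbit equivalence may be taken with $l_1 = k_1+1$ and $l_2 = k_2+1$, i.e. the shifts are eventually one-sided conjugate.

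The step I expect to be the main obstacle is the \emph{only if} direction's requirement that the intertwining relation of Theorem \ref{thm:COETFAE} holds for the \emph{given} $\Phi$ with its associated data $(h,k_i,l_i)$, rather than merely for some isomorphism produced from $h$. This amounts to checking that the functions $k_i,l_i$ governing the orbit equivalence are exactly those read off from $\Phi$ on the generators, so that $\Psi_{h^{-1}}$ computed from them is genuinely compatible with $\Phi$; once this is secured, the exact (not merely cohomological) equalities $l_i - k_i = 1$ drop out of the injectivity in Proposition \ref{prop:HACT} at the level of actions, which is what separates eventual conjugacy from a weaker, coboundary-valued conclusion.
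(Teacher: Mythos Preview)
Your \emph{if} direction is correct and matches the paper: apply Theorem \ref{thm:COETFAE} and use $\Psi_{h^{-1}}(1_A) = l_2 - k_2 = 1$ to see that the resulting $\Phi$ intertwines the gauge actions.

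The \emph{only if} direction has precisely the gap you flagged, and it is a real one. Theorem \ref{thm:COETFAE} builds from a given $h$ a \emph{specific} isomorphism $\varPhi_h = \Ad(U_h)$ satisfying the intertwining relation; it says nothing about an arbitrary $\Phi$ with $\Phi(\DA)=\DB$. The homeomorphism $h$ is recovered from $\Phi|_{\DA}$ via $\Phi(a)=a\circ h^{-1}$, and the cocycle functions $k_i,l_i$ are then properties of $h$ alone, not of $\Phi$ on the generators; the given $\Phi$ and $\varPhi_h$ agree on $\DA$ but may differ on the $S_i$, so you cannot simply substitute $\Phi$ into the conclusion of Theorem \ref{thm:COETFAE} and invoke injectivity of $f\mapsto\rho^{B,f}$.

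The paper closes this gap in Theorem \ref{thm:fg}(i), from which the present corollary is then immediate by taking $f=1_B$, $g=1_A$. The extra ingredient is the structure of automorphisms of $\OA$ fixing $\DA$ pointwise: any such $\alpha$ satisfies $\alpha(S_i)=VS_i$ for a single unitary $V\in\DA$ (\cite[Theorem 6.5(1)]{MaPacific}). Applying this to $\alpha=\varPhi_h^{-1}\circ\Phi$ gives $\Phi(S_i)=\varPhi_h(VS_i)$; expanding both sides of the hypothesis $\Phi\circ\rho^{A,g}_t(S_i)=\rho^{B,f}_t\circ\Phi(S_i)$, using that $\varPhi_h$ satisfies the intertwining of Theorem \ref{thm:COETFAE}, and that $\Phi=\varPhi_h$ on $\DA$, the factor $V$ cancels and one is left with $U_t(g)=U_t(\Psi_h(f))$, hence $g=\Psi_h(f)$. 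With $f=g=1$ this reads $\Psi_h(1_B)=1_A$, i.e.\ $l_1-k_1=1$; the companion equality $l_2-k_2=1$ then follows from $\Psi_{h^{-1}}=\Psi_h^{-1}$. So your outline is on the right track, but the comparison of $\Phi$ with $\varPhi_h$ via this $\DA$-fixing automorphism lemma is the missing step.
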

For flow equivalence of two-sided topological Markov shifts,
we have the following theorem
by using Parry--Sullivan's theorem \cite{PS} 
which says that the flow equivalence relation is 
generated by topological conjugacies of two-sided topological Markov shifts and expansions of the underlying directed graphs.
\begin{theorem}[{Theorem \ref{thm:FETFAE}}]
Suppose that the two-sided topological Markov shifts
$(\bar{X}_A, \bar{\sigma}_A)$ and $(\bar{X}_B,\bar{\sigma}_B)$
are flow equivalent.
Then there exist an isomorphism
$\Phi:\SOA \rightarrow \SOB$,
a homomorphism
$\varphi:C(X_A,\Z) \rightarrow C(X_B,\Z)$
of ordered abelian groups
and 
a unitary one-cocycle
 $u_t^f \in \U(M(\SOA))$ relative to 
$\rho^{A,f} \otimes\id$
for each function
$f\in C(X_A,\Z)$
satisfying the following conditions
\begin{enumerate}
\renewcommand{\theenumi}{\roman{enumi}}
\renewcommand{\labelenumi}{\textup{(\theenumi)}}
\item
$\Phi(\SDA) = \SDB$,
\item
$\Phi \circ \Ad(u_t^f) \circ (\rho^{A,f}_t\otimes \id)
 = (\rho^{B,\varphi(f)}_t\otimes\id) \circ \Phi
\text{ for }
 f \in C(X_A,\Z), \, t \in \RZ,
$
\item
$\varphi:C(X_A,\Z) \rightarrow C(X_B,\Z)$
induces 
an isomorphism
from  
$H^A$
to  $H^B$
as ordered abelian groups.
\end{enumerate}
\end{theorem}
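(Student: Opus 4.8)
The plan is to run the Parry--Sullivan reduction \cite{PS}: since the flow equivalence relation is generated by topological conjugacies of the two-sided shifts and by expansions of the underlying directed graphs, it suffices to produce a triple $(\Phi,\varphi,\{u^f\})$ satisfying (i)--(iii) for each of these two elementary moves, and then to verify that the collection of such triples is closed under composition and inversion. Closure under composition is a cocycle computation: if $A\to C$ is witnessed by $(\Phi_1,\varphi_1,\{u^f\})$ and $C\to B$ by $(\Phi_2,\varphi_2,\{v^g\})$, then $\Phi:=\Phi_2\circ\Phi_1$ and $\varphi:=\varphi_2\circ\varphi_1$ together with
\[
w^f_t = \Phi_1^{-1}\!\left(v^{\varphi_1(f)}_t\right) u^f_t
\]
do the job; substituting into (ii) and using the identities $\Phi_2\circ\Ad(v)\circ\Phi_1=\Ad(\Phi_2(v))\circ\Phi$ and $\Phi^{-1}\circ\Phi_2=\Phi_1^{-1}$ shows that $w^f$ is again a unitary one-cocycle relative to $\rho^{A,f}\otimes\id$ and intertwines the two actions as required. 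Inverses are handled by the analogous bookkeeping, replacing $(\Phi,\varphi,u^f)$ by $(\Phi^{-1},\varphi^{-1},\tilde u^{g})$ with $\tilde u^g_t=\Phi\big((u^{\varphi^{-1}(g)}_t)^*\big)$, which one checks is a cocycle relative to $\rho^{B,g}\otimes\id$.

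For the topological conjugacy move I would reduce further to elementary state splittings (Williams' theorem), each of which yields, at the level of the one-sided shifts, either a one-sided topological conjugacy or an eventual one-sided conjugacy; both are special cases of continuous orbit equivalence. Hence Theorem \ref{thm:COETFAE} applies directly and furnishes a diagonal-preserving isomorphism $\Phi_0:\OA\to\OB$ with $\Phi_0\circ\rho^{A,f}_t=\rho^{B,\Psi_{h^{-1}}(f)}_t\circ\Phi_0$ and with $\Psi_{h^{-1}}$ inducing an order-isomorphism $H^A\to H^B$. Tensoring with $\id_{\mathcal K}$ and setting $\varphi=\Psi_{h^{-1}}$ and $u^f_t=1$ gives the required triple; here the stabilization is merely cosmetic, since conjugate one-sided shifts already have diagonal-preservingly isomorphic (unstabilized) Cuntz--Krieger algebras.

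The substantial case is the expansion move $A\rightsquigarrow\tilde A$, in which one subdivides an edge by inserting a new symbol. Here $\OA$ and $\OTA$ need not be isomorphic---the class of the order unit changes---so the stabilization $\otimes\,\mathcal K$ is genuinely used. The plan is to write down an explicit diagonal-preserving stable isomorphism $\Phi:\SOA\to\SOTA$ realizing the standard edge-subdivision equivalence of graph algebras, so that $\Phi(\SDA)=\SDTA$, and to define $\varphi:C(X_A,\Z)\to C(X_{\tilde A},\Z)$ by pulling functions back along the subdivision and extending by the natural value on the new symbol. Because the subdivided edge now has length two, the gauge degree carried by $\rho^{A,f}$ is redistributed across the two new edges, so that the transported action $\Phi\circ(\rho^{A,f}_t\otimes\id)\circ\Phi^{-1}$ differs from $\rho^{\tilde A,\varphi(f)}_t\otimes\id$ by an inner automorphism; this discrepancy is precisely the unitary one-cocycle $u^f_t\in\U(M(\SOA))$, which I expect to exhibit explicitly in terms of the partial isometries attached to the new symbol. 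The passage to the multiplier algebra is forced because the stable isomorphism is implemented through full corners and infinite systems of matrix units.

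The main obstacle is exactly this expansion step: verifying the intertwining identity (ii) with the explicitly constructed cocycle, and confirming that $\varphi$ induces an order-isomorphism $H^A\to H^{\tilde A}$. The latter amounts to the fact that edge subdivision leaves the suspension flow, and hence the ordered cohomology, unchanged; one may argue this through the natural isomorphism between the one-sided and two-sided ordered cohomology groups \cite[Lemma 3.1]{MMKyoto} together with Boyle--Handelman's flow-equivalence invariance of $(\bar H^A,\bar H^A_+)$ \cite{BH}. Once both generating moves carry the required data, property (iii) for the final $\Phi$ follows because a composition of order-isomorphisms on cohomology is again an order-isomorphism, while property (i) is immediate from the fact that $\Phi(\SDA)=\SDB$ is preserved under the composition and inversion formulas recorded in the first paragraph.
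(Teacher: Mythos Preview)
Your overall architecture---Parry--Sullivan reduction, with separate treatment of conjugacy and expansion, glued via cocycle bookkeeping---matches the paper's, and your composition/inversion formulas are correct (the paper leaves these implicit).  The expansion sketch is also on target: the paper realizes $\OA$ as the full corner $P\OTA P$ (Lemma~\ref{lem:Phi0A}), builds a partial isometry $v\in M(\SOTA)$ with $v^*v=1$, $vv^*=P\otimes 1$ preserving diagonals (Lemmas preceding Proposition~\ref{prop:Phit}), and takes $u^{\tilde f}_t=v^*(\rho^{\tilde A,\tilde f}_t\otimes\id)(v)$ exactly as you anticipate.

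The genuine gap is your handling of the topological conjugacy move.  You assert that each elementary state splitting yields, on the one-sided shifts, a one-sided conjugacy or an eventual one-sided conjugacy, and then invoke Theorem~\ref{thm:COETFAE} with $u^f_t=1$.  Out-splits do give one-sided conjugacies, but in-splits in general do \emph{not} give eventual one-sided conjugacy in the sense of this paper: the natural collapsing map $X_B\to X_A$ is not injective (the lift of an initial symbol depends on a nonexistent predecessor), and in fact strong shift equivalent matrices need not be eventually one-sided conjugate.  By Corollary~\ref{coro:evstcoe}(i), eventual one-sided conjugacy is exactly the existence of a gauge-equivariant diagonal-preserving isomorphism $\OA\to\OB$, so your plan would force $u^f_t=1$ through every conjugacy step, which is too strong.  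The paper treats an elementary equivalence $A=CD$, $B=DC$ quite differently (Section~4): it forms $Z=\begin{bmatrix}0&C\\ D&0\end{bmatrix}$, realizes $\OA$ and $\OB$ as complementary full corners $P_C\OZ P_C$ and $P_D\OZ P_D$, and extracts the stable diagonal-preserving isomorphism and the cocycle $u^f_t=w^*(\rho^{Z,f}_t\otimes\id)(w)$ from the corner embeddings.  The accompanying homomorphism is $\phi(f)=\sum_{d\in E_D}S_d f S_d^*$, which is not a $\Psi_{h^{-1}}$ for any homeomorphism $h$; that it induces an order-isomorphism $H^A\to H^B$ is checked directly via $(\psi\circ\phi)(f)=f\circ\sigma_A$.

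A smaller point: for (iii) in the expansion step you appeal to Boyle--Handelman to conclude $H^A\cong H^{\tilde A}$ abstractly, but what is needed is that your \emph{constructed} $\varphi$ induces the isomorphism.  The paper verifies this by writing down explicit continuous orbit maps $\xi:X_A\to X_{\tilde A}$, $\eta:X_{\tilde A}\to X_A$ and checking $\Psi_\xi\circ\Psi_\eta=\id$ and $[(\Psi_\eta\circ\Psi_\xi)(\tilde f)]=[\tilde f]$ directly (Lemma~\ref{lem:Psietaxi}).
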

\medskip

In this paper,
we denote 
by $\N$
the set of positive integers
and
by $\Zp$
the set of nonnegative integers,
respectively.
For a topological Markov shift
$(X_A,\sigma_A)$,
a word $\mu =(\mu_1, \dots, \mu_k)$ 
for $\mu_i \in \{1,\dots,N\}$
is said to be admissible for $X_A$ 
if there exists 
$(x_n)_{n \in \N} \in X_A$ 
such that 
$(\mu_1, \dots, \mu_k) =(x_1, \dots, x_k)$.
The length of $\mu$ is $k$, 
which is denoted by $|\mu|$.
 We denote by 
$B_k(X_A)$ the set of all admissible words of length $k$.
The cylinder set 
$\{ (x_n )_{n \in \Bbb N} \in X_A 
\mid x_1 =\mu_1,\dots, x_k = \mu_k \}$
for $\mu=(\mu_1,\dots,\mu_k) \in B_k(X_A)$
is denoted by $U_\mu$.  

\medskip

This paper is a revised version of the paper entitled 
{\it Continuous orbit equivalence, flow equivalence of Markov shifts and torus actions on Cuntz--Krieger algebras}, arXiv:1501.06965.

\section{Circle actions}
We fix  
an $N\times N$ irreducible matrix
$A=[A(i,j)]_{i,j=1}^N$ with entries in $\{0,1\}$.
Let $S_1,\dots, S_N$
be the canonical generating partial isometries of the Cuntz-Krieger 
algebra $\OA$ satisfying the relations \eqref{eq:CK}.
For a word $\mu =(\mu_1, \dots, \mu_k) \in B_k(X_A)$,
denote by $S_\mu$ the partial isometry
$S_{\mu_1}\cdots S_{\mu_k}$.  
Let us denote by
${\operatorname{Act}}(\RZ,\OA)$
the set of all continuous actions of  the circle group
${\mathbb{R}}/{\mathbb{Z}} $ as automorphisms of $\OA$.
We set
\begin{equation}
\operatorname{Act}_{\DA}(\RZ,\OA)
=\{ \alpha \in \operatorname{Act}(\RZ,\OA) \mid
\alpha_t(a) = a \text{ for all } t \in \RZ, a \in \DA\}.
\end{equation}
We denote by 
$\operatorname{Rep}(\RZ,\OA)$
(resp.
$\operatorname{Rep}(\RZ,\DA)$)
the set of unitary representations of $\RZ$ into 
the unitary group of $\OA$ (resp. of $\DA$).
We are assuming that the matrix $A$ 
is irreducible and not any permutation matrix,
so that the subalgebra $\DA$ is maximal abelian in $\OA$ (\cite{CK}).
Hence if $\alpha \in \ActA$ is of the form 
$\alpha_t = \Ad(u_t), t \in \RZ$
for some  
$u \in \operatorname{Rep}(\RZ,\OA)$,
then
$u_t \in \DA$ for all $t \in \RZ$
and hence 
$u \in \operatorname{Rep}(\RZ,\DA)$.

Recall that 
for a function $f\in C(X_A,\Z)$ 
and $t \in \RZ$,
an automorphism
$\rho_t^{A,f}\in\Aut(\OA)$ 
is defined by 
$
\rho_t^{A,f}(S_i) = U_t(f) S_i, i=1,\dots,N, t \in \RZ
$
for the unitary
$
U_t(f) = \exp({2\pi \sqrt{-1} t f})\in \DA
$
as in \eqref{eq:utf} and \eqref{eq:rhotf}.
It is easy to see that
the automorphisms
$\rho_t^{A,f}, t \in \RZ$ 
yield an action of 
$\RZ$ on $\OA$
 such that
$\rho^{A,f}_t(a) =a$
for all $a \in \DA$
 so that
$\rho^{A,f} \in \ActA$.
\begin{lemma}
An action $\alpha \in \operatorname{Act}(\RZ,\OA)$
belongs to
 $\ActA$ if and only if 
there exists a continuous function
$f \in C(X_A,\Z)$ 
such that 
$\alpha_t = \rho_t^{A,f}$ 
for all
$t \in \RZ$.
\end{lemma}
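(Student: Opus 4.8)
The forward implication is already recorded in the discussion preceding the statement ($\rho^{A,f}\in\ActA$ for every $f\in C(X_A,\Z)$), so the content to prove is the reverse: given $\alpha\in\ActA$, I must produce $f\in C(X_A,\Z)$ with $\alpha_t=\rho_t^{A,f}$ for all $t$. The plan is to extract from $\alpha$ a one-parameter family of unitaries lying \emph{inside} $\DA$ that implements $\alpha$ on the generators by left multiplication, and then to identify that family with $U_t(f)$. Concretely, I would set
$$
u_t = \sum_{j=1}^N \alpha_t(S_j) S_j^*, \qquad t \in \RZ,
$$
and aim to show $u\in\operatorname{Rep}(\RZ,\DA)$ together with $\alpha_t(S_i)=u_t S_i$ for every $i$.

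First I would verify that each $u_t$ is unitary and implements $\alpha_t$ on generators. Since $\alpha_t$ fixes $\DA$ pointwise and the source/range projections $S_i^*S_i,\ S_iS_i^*$ lie in $\DA$, the element $\alpha_t(S_i)$ is a partial isometry with $\alpha_t(S_i)^*\alpha_t(S_i)=S_i^*S_i$ and $\alpha_t(S_i)\alpha_t(S_i)^*=S_iS_i^*$. Using the orthogonality relation $S_i^*S_j=\delta_{ij}S_i^*S_i$ (valid because $\sum_i S_iS_i^*=1$ forces the $S_iS_i^*$ to be mutually orthogonal) together with $\alpha_t(S_i^*S_j)=S_i^*S_j$, a direct computation gives $u_t u_t^*=\sum_i \alpha_t(S_i)\alpha_t(S_i)^*=\alpha_t(1)=1$ and likewise $u_t^*u_t=1$. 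The same orthogonality collapses the defining sum to $u_t S_i=\alpha_t(S_i)S_i^*S_i=\alpha_t(S_i)$.

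The main obstacle is proving $u_t\in\DA$; this is where maximal abelianness of $\DA$ enters. I would show that $u_t$ commutes with $\DA$. Fix $d\in\DA$. Since $d$ commutes with $S_jS_j^*\in\DA$ and $S_j^*(1-S_jS_j^*)=0$, one obtains $S_j^* d=(S_j^* d S_j)S_j^*$ with $d_j':=S_j^* d S_j\in\DA$, and $S_j d_j'=(S_jS_j^*)dS_j=dS_j$. Feeding these into the definition of $u_t$ and using $\alpha_t|_{\DA}=\id$,
$$
u_t d = \sum_j \alpha_t(S_j) d_j' S_j^* = \sum_j \alpha_t(S_j d_j') S_j^* = \sum_j \alpha_t(d S_j) S_j^* = d \sum_j \alpha_t(S_j) S_j^* = d\, u_t .
$$
Thus $u_t$ commutes with $\DA$, and since $\DA$ is maximal abelian in $\OA$ its relative commutant equals $\DA$, whence $u_t\in\DA$.

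Finally I would identify the family $\{u_t\}$. As $u_t\in\DA$ and $\alpha_t$ fixes $\DA$, combining $\alpha_{t+s}(S_i)=\alpha_t(u_s S_i)=u_s u_t S_i$ with $\alpha_{t+s}(S_i)=u_{t+s}S_i$ gives $u_{t+s}=u_t u_s$, with $u_0=1$ and $t\mapsto u_t$ norm-continuous (inherited from continuity of $\alpha$); hence $u\in\operatorname{Rep}(\RZ,\DA)$. Under $\DA\cong C(X_A)$, a continuous representation of the circle $\RZ$ into the unitaries of $C(X_A)$ decomposes through the spectral projections $p_n=\int_0^1 e^{-2\pi\sqrt{-1}nt}u_t\,dt\in\DA$; norm-continuity forces all but finitely many $p_n$ to vanish, so $f:=\sum_n n\,p_n$ is a well-defined element of $\DA$ with integer spectrum, that is $f\in C(X_A,\Z)$, and $u_t=\exp(2\pi\sqrt{-1}t f)=U_t(f)$. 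Then $\alpha_t(S_i)=u_t S_i=\rho_t^{A,f}(S_i)$ for all $i$ and all $t$, and since the $S_i$ generate $\OA$ we conclude $\alpha_t=\rho_t^{A,f}$, completing the proof.
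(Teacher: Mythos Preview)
Your proof is correct and follows the same overall strategy as the paper's: extract a unitary representation $u:\RZ\to\U(\DA)$ with $\alpha_t(S_i)=u_tS_i$, then identify $u_t$ with $U_t(f)$ for some $f\in C(X_A,\Z)$. The difference is that the paper outsources both steps to external references---\cite{MaJOT2000} for the existence of $u_t\in\U(\DA)$, and \cite[Lemma 6.4]{MaPre2014} for the passage from $u$ to $f$---whereas you supply explicit, self-contained arguments: the formula $u_t=\sum_j\alpha_t(S_j)S_j^*$ together with a direct commutation check against $\DA$ and maximal abelianness for the first step, and the spectral-projection argument (with norm continuity forcing only finitely many $p_n$ to survive) for the second. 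Your version thus stands on its own; the paper's is shorter but leans on prior work.
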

\begin{proof}
It suffices to show the only if part.
Suppose that $\alpha \in \operatorname{Act}(\RZ,\OA)$
belongs to $\ActA$.
By \cite[Lemma 4.6, Corollary 4.7]{MaJOT2000},
there exists a unitary $u_t \in {\U}(\DA)$
for each $t \in \RZ$
 such that
$\alpha_t(S_i) = u_t S_i$ for 
$i=1,\dots,N$ (cf. \cite{Cu2}).
As $\alpha_s(u_t)= u_t$
for $s,t \in \RZ$,
we see that for $i=1,\dots,N$
\begin{equation*}
u_t u_s S_i 
= u_t \alpha_s(S_i) 
= \alpha_s(u_t S_i) 
= \alpha_s(\alpha_t(S_i)) 
= \alpha_{s+t}(S_i) 
= u_{s+t} S_i 
\end{equation*}
so that
$u_t u_s = u_{s+t}$.
The continuity of $u:\RZ \rightarrow  {\U}(\DA)$
follows from 
that of the action $\alpha$.
Hence
there exists a unitary representation
$u \in \RepDA$
satisfying
$\alpha_t(S_i) = u_t S_i$ for 
$i=1,\dots,N$ and $t \in \RZ$.
As in \cite[Lemma 6.4]{MaPre2014},
there exists a continuous function
$f \in C(X_A,\Z)$ 
such that 
$u_t = U_t(f)$ 
and hence
$\alpha_t(S_i) = \rho_t^{A,f}(S_i)$ 
for  $i=1,\dots,N, \,t \in \RZ$.
\end{proof}
For $\alpha, \beta \in \ActA$,
take 
$f,g \in C(X_A,\Z)$
such that
$
\alpha_t(S_i) = U_t(f)S_i,
$
$ 
\beta_s(S_i) = U_s(g)S_i,  
s,t \in \RZ, i=1,\dots,N.
$
The commutativity  
$
U_s(g)U_t(f)= U_t(f)U_s(g)
$
implies 
$\alpha_t \circ \beta_s = \beta_s \circ \alpha_t$
for $s,t \in \RZ$. 
Hence the set $\ActA$ has a structure of abelian group by the product
$(\alpha\cdot\beta)_t = \alpha_t\circ\beta_t$ for $t \in \RZ$.
As the set $C(X_A,\Z)$ 
has a structure of abelian group by pointwise sums of functions,
we get the following.
\begin{proposition} \label{prop:abelgr}
The correspondence
$
f \in C(X_A,\Z) \rightarrow \rho^{A,f} \in \ActA
$
yields an isomorphism of abelian groups.
\end{proposition}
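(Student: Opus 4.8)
The plan is to verify directly the three properties making the assignment $f \mapsto \rho^{A,f}$ an isomorphism of abelian groups: surjectivity, the homomorphism property, and injectivity. Surjectivity requires no new work, as it is precisely the content of the preceding Lemma, which shows that every $\alpha \in \ActA$ equals $\rho^{A,f}$ for a suitable $f \in C(X_A,\Z)$. It therefore remains to check that the correspondence carries pointwise addition of functions to the group product $(\alpha\cdot\beta)_t = \alpha_t\circ\beta_t$, and that distinct functions yield distinct actions.

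For the homomorphism property I would evaluate the composite $\rho^{A,f}_t \circ \rho^{A,g}_t$ on the canonical generators $S_1,\dots,S_N$. The key structural input is that $U_t(g)\in\DA$ and that $\rho^{A,f}_t$, belonging to $\ActA$, fixes $\DA$ pointwise; combining this with the fact that $\rho^{A,f}_t$ is an automorphism gives
\begin{equation*}
(\rho^{A,f}_t \circ \rho^{A,g}_t)(S_i)
= \rho^{A,f}_t\bigl(U_t(g)\,S_i\bigr)
= U_t(g)\,U_t(f)\,S_i
= U_t(f+g)\,S_i,
\end{equation*}
where the last step uses $U_t(f)U_t(g)=\exp(2\pi\sqrt{-1}\,t(f+g))=U_t(f+g)$, valid since $\DA$ is commutative. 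Because an automorphism of $\OA$ is determined by its values on the $S_i$, this yields $\rho^{A,f}_t\circ\rho^{A,g}_t=\rho^{A,f+g}_t$ for every $t\in\RZ$, that is $\rho^{A,f}\cdot\rho^{A,g}=\rho^{A,f+g}$.

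For injectivity I would assume $\rho^{A,f}=\rho^{A,g}$ and deduce $f=g$. The hypothesis gives $U_t(f)S_i=U_t(g)S_i$ for all $i$ and all $t$; multiplying on the right by $S_i^*$, summing over $i$, and invoking the Cuntz--Krieger relation $\sum_{i=1}^N S_iS_i^*=1$ produces $U_t(f)=U_t(g)$ for every $t\in\RZ$. Under the identification $\DA\cong C(X_A)$ this reads $\exp(2\pi\sqrt{-1}\,t\,f(x))=\exp(2\pi\sqrt{-1}\,t\,g(x))$ for all $x\in X_A$ and all $t$. If $f(x)\neq g(x)$ for some $x$, then $f(x)-g(x)$ is a nonzero integer and one can choose $t\in\RZ$ with $t\bigl(f(x)-g(x)\bigr)\notin\Z$, contradicting the equality of exponentials; hence $f(x)=g(x)$ for every $x$, so $f=g$.

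All three verifications are computationally routine, and I would not expect any of them to present a genuine obstacle. The one place where the hypotheses are essential is the homomorphism step, where the triviality of the $\ActA$-action on $\DA$ is exactly what allows $\rho^{A,f}_t$ to pass through $U_t(g)$; the substantive content of the statement—surjectivity—is already secured by the Lemma, whose proof in turn rests on the maximal abelianness of $\DA$ in $\OA$ and the cocycle analysis of \cite{MaJOT2000}.
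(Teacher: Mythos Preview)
Your proof is correct and follows essentially the same approach as the paper: surjectivity is taken from the preceding lemma, the homomorphism property is checked on generators via the multiplicativity $U_t(f)U_t(g)=U_t(f+g)$, and injectivity reduces to the observation that $U_t(f)=1$ for all $t\in\RZ$ forces $f=0$. The paper's write-up is terser (it simply records the identities $U_t(f_1+f_2)=U_t(f_1)U_t(f_2)$, $U_t(-f)=U_t(f)^*$, $U_t(0)=1$ and declares injectivity ``clear''), but the underlying argument is the same as yours.
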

\begin{proof}
Since the identities
\begin{equation*}
U_t(f_1 + f_2) = U_t(f_1) U_t(f_2),
\qquad
U_t(-f) = U_{t}(f)^*,
\qquad
U_t(0) = 1
\end{equation*}
hold for
$f_1, f_2, f \in C(X_A,\Z)$,
we have
\begin{equation*}
\rho_t^{A,f_1 + f_2} = \rho_t^{A,f_1}\circ \rho_t^{A,f_2},
\qquad
\rho_t^{A,-f} = (\rho_{t}^{A,f})^{-1},
\qquad
\rho_t^{A,0} = \id.
\end{equation*}
Hence the correspondence
$f \in C(X_A,\Z) \rightarrow \rho^{A,f} \in \ActA$
yields a homomorphism of groups.
Its surjectivity comes from the preceding lemma.
It is clear to see that $\rho_t^{A,f} = \id$ for all $t \in \RZ$
if and only if $f$ is identically zero.
\end{proof}
Under the identification
between
the algebras 
$C(X_A)$ and $\DA$
through the correspondence 
between
$\chi_{U_\mu} \in C(X_A)$
and
$S_\mu S_\mu^* \in \DA$
for $\mu \in B_k(X_A)$,
we have
$ \sum_{i=1}^N S_i f S_i^* = f \circ \sigma_A$
for $f \in C(X_A)$
so that the equalities
\begin{equation}
S_i U_t(f) 
= \sum_{j=1}^N S_j U_t(f) S_j^* S_i 
= U_t(f\circ\sigma_A) S_i, 
\quad i=1,\dots,N \label{eq:siu}
\end{equation}
hold.
For a function $f \in C(X_A,\Z)$, 
denote by $[f]$ the class of $f$ in the ordered cohomology group
$H^A$. 
We then have the following lemma. 
\begin{lemma} \label{lem:HA}
For $f_1, f_2 \in C(X_A,\Z)$,
the following two conditions are equivalent:
\begin{enumerate}
\renewcommand{\theenumi}{\roman{enumi}}
\renewcommand{\labelenumi}{\textup{(\theenumi)}}
\item
$[f_1] = [f_2]$ in $H^A$.
\item
$\rho_t^{A,f_2} = \Ad(u_t) \circ \rho_t^{A,f_1}$ on $\OA$
for some $u \in \RepDA$.
\end{enumerate}
\end{lemma}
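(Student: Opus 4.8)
The plan is to reduce condition (ii), which is an equality of automorphisms of $\OA$, to an equality of $\Z$-valued functions, and that equality will be exactly the coboundary relation defining $H^A$. Since $S_1,\dots,S_N$ generate $\OA$ and both $\rho_t^{A,f_2}$ and $\Ad(u_t)\circ\rho_t^{A,f_1}$ are automorphisms, (ii) holds if and only if the two sides agree on each generator, that is
$U_t(f_2)S_i = u_t\,U_t(f_1)\,S_i\,u_t^{*}$ for all $i$ and all $t\in\RZ$. Because $u_t\in\U(\DA)$, I would then invoke the commutation relation $S_i g = (g\circ\sigma_A)S_i$ for $g\in\DA$ (the same computation that yields \eqref{eq:siu}) to move $u_t^{*}$ across $S_i$ and rewrite the right-hand side as $u_t\,U_t(f_1)\,(u_t^{*}\circ\sigma_A)\,S_i$.

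The first key step is to cancel the generators. Both $U_t(f_2)$ and $u_t\,U_t(f_1)\,(u_t^{*}\circ\sigma_A)$ lie in $\DA$; multiplying the identity $U_t(f_2)S_i = u_t\,U_t(f_1)\,(u_t^{*}\circ\sigma_A)\,S_i$ on the right by $S_i^{*}$ and summing over $i$, the relation $\sum_{i=1}^N S_iS_i^{*}=1$ produces the equivalent diagonal identity
\begin{equation*}
U_t(f_2) = u_t\, U_t(f_1)\,(u_t^{*}\circ\sigma_A) \qquad (t\in\RZ).
\end{equation*}
Conversely this identity clearly implies agreement on each $S_i$, so (ii) is equivalent to it.

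The second step is to parametrize $\RepDA$. Exactly as in the proof of the preceding lemma (via \cite[Lemma 6.4]{MaPre2014}), every $u\in\RepDA$ has the form $u_t = U_t(\xi)$ for a unique $\xi\in C(X_A,\Z)$, and conversely each such $\xi$ gives an element of $\RepDA$. Substituting $u_t=U_t(\xi)$ and using that $\DA$ is abelian together with the identities $U_t(\xi)^{*}=U_t(-\xi)$ from Proposition \ref{prop:abelgr} and $U_t(\xi)\circ\sigma_A = U_t(\xi\circ\sigma_A)$, the diagonal identity collapses to $U_t(f_2)=U_t(f_1+\xi-\xi\circ\sigma_A)$ for all $t\in\RZ$.

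Finally, since $f_2$ and $f_1+\xi-\xi\circ\sigma_A$ are both $\Z$-valued, the equality of the one-parameter unitary groups $U_t(\cdot)$ for every $t$ forces equality of the exponents, i.e. $f_2 = f_1+\xi-\xi\circ\sigma_A$, equivalently $f_2-f_1 = \xi-\xi\circ\sigma_A$. This is precisely the assertion that $[f_1]=[f_2]$ in $H^A$, so reading the chain of equivalences one way yields (ii)$\Rightarrow$(i) and the other way (i)$\Rightarrow$(ii). The only genuine subtleties are the legitimacy of cancelling the $S_i$ (handled by $\sum_i S_iS_i^{*}=1$) and the passage from equality of the exponentials $U_t(\cdot)$ to equality of the integer-valued exponents; the identification of $\RepDA$ with $C(X_A,\Z)$ is the crucial input, but it is already available from the cited results.
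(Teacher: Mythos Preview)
Your argument is correct and follows essentially the same route as the paper's proof: both use the commutation identity \eqref{eq:siu} to move $u_t^{*}$ across $S_i$, invoke \cite[Lemma 6.4]{MaPre2014} to write $u_t=U_t(\xi)$ for some $\xi\in C(X_A,\Z)$, and reduce to $U_t(f_2)=U_t(f_1+\xi-\xi\circ\sigma_A)$. The only cosmetic difference is that you package both directions as a single chain of equivalences and make the cancellation of the $S_i$ explicit via $\sum_i S_iS_i^{*}=1$, whereas the paper treats (i)$\Rightarrow$(ii) and (ii)$\Rightarrow$(i) separately.
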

\begin{proof}
(i) $\Rightarrow$ (ii):
Since $[f_1] = [f_2]$ in $H^A$,
there exists a continuous function $b \in C(X_A,\Z)$ such that
$f_2 = f_1 + b- b\circ \sigma_A$.
Put
$u_t = U_t(b) \in {\U}(\DA)$ for $t \in \RZ$.
By the identity \eqref{eq:siu}, one sees that
\begin{equation*}
u_t \rho_t^{f_1}(S_i) u_t^*
 = U_t(b) U_t(f_1) S_i U_t(b)^* 
 = U_t(b) U_t(f_1) U_t(-b \circ \sigma_A) S_i 
  = U_t(f_2) S_i 
\end{equation*} 
so that
we have
$\Ad(u_t) \circ \rho_t^{A,f_1} = \rho_t^{A,f_2}$.

(ii) $\Rightarrow$ (i):
By \cite[Lemma 6.4]{MaPre2014},
there exists a continuous function
$b \in C(X_A,\Z)$ 
such that 
$u_t = U_t(b)$. 
The equality
$\rho_t^{A,f_2}(S_i) = \Ad(u_t) \circ \rho_t^{A,f_1}(S_i) $
implies that
\begin{equation*}
U_t(f_2) S_i
= U_t(b + f_1 - b\circ \sigma_A) S_i, \qquad i=1,\dots,N 
\end{equation*}
so that
$f_2 = f_1 + b- b\circ \sigma_A$
and hence
$[f_1] = [f_2]$.
\end{proof}
For
$\alpha^1, \alpha^2 \in \ActA$,
we write
$\alpha^1\sim \alpha^2$
if there exists a unitary one-cocycle 
$u_t  \in \OA, t \in \RZ$
relative to $\alpha^2$
such that
$\alpha^1_t = \Ad u_t \circ \alpha^2_t$
for $t \in \RZ$.
In this case, we know that 
$u_t$ belongs to $\DA$
and satisfies
$u_{t+s} = u_t u_s$ for $t,s \in \RZ$
because $\alpha^i_t(a) =a$ for all $a \in \DA$
and $\DA$ is maximal abelian in $\OA$.
It is easy to see that 
$\sim$ is an equivalence relation in $\ActA$.
We write
$[\alpha]$ for the equivalence class of $\alpha \in \ActA$. 
For $\alpha \in \ActA$, we put
\begin{equation*}
\delta(\alpha)_t = \frac{d}{dt}\left(\sum_{i=1}^N \alpha_t(S_i)S_i^*\right)(t).
\end{equation*}
Define 
$\alpha \geq 0$ if 
$\frac{1}{2\pi\sqrt{-1}}\delta(\alpha)_0 $ is a positive operator in $\OA$. 
We write
$[\alpha]\geq 0$ 
if there exists $\alpha' \in \ActA$ such that
$\alpha'\geq 0$ and $\alpha \sim \alpha'$. 
As
$\delta(\rho^{A,f})_0 = 2\pi\sqrt{-1} f$,
we have
\begin{proposition}\label{prop:HACT}
The isomorphism
$f \in C(X_A,\Z) \rightarrow \rho^{A,f} \in \ActA$
of groups induces an isomorphism
$[f] \in H^A\rightarrow [\rho^{A,f}] \in \ActA/\sim$
 of ordered abelian groups.
\end{proposition}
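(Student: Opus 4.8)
The plan is to assemble the ordered-group isomorphism entirely from the machinery already in place: Proposition~\ref{prop:abelgr} gives the group isomorphism $C(X_A,\Z)\to\ActA$, and Lemma~\ref{lem:HA} pins down precisely when two such actions are $\sim$-equivalent. First I would check that $\sim$ is a congruence for the product on $\ActA$, so that $\ActA/\sim$ is genuinely an abelian group: if $\alpha^1_t=\Ad(u_t)\circ\beta^1_t$ and $\alpha^2_t=\Ad(v_t)\circ\beta^2_t$ with $u,v\in\RepDA$, then since $\beta^1_t$ fixes $v_t\in\DA$ one gets $(\alpha^1\cdot\alpha^2)_t=\Ad(u_tv_t)\circ(\beta^1\cdot\beta^2)_t$, where $t\mapsto u_tv_t$ is again in $\RepDA$. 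Hence $\sim$ respects products and the quotient group is well defined.

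Next I would identify the relation $\sim$ on the image with cohomology. As recorded after the definition of $\sim$, any unitary one-cocycle relative to $\rho^{A,f_1}\in\ActA$ automatically lies in $\DA$, and because $\rho^{A,f_1}_t$ acts trivially on $\DA$ the cocycle identity collapses to $u_{t+s}=u_tu_s$; thus such cocycles are exactly the elements of $\RepDA$. Therefore condition~(ii) of Lemma~\ref{lem:HA} is literally the assertion $\rho^{A,f_1}\sim\rho^{A,f_2}$, and Lemma~\ref{lem:HA} reads: $[f_1]=[f_2]$ in $H^A$ if and only if $\rho^{A,f_1}\sim\rho^{A,f_2}$. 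This one equivalence yields at once both the well-definedness and the injectivity of $[f]\mapsto[\rho^{A,f}]$. Surjectivity is the opening Lemma of this section (every $\alpha\in\ActA$ equals some $\rho^{A,f}$), and the homomorphism property descends from Proposition~\ref{prop:abelgr} through the congruence established above.

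It then remains to match the positive cones. Using the identity $\delta(\rho^{A,g})_0=2\pi\sqrt{-1}\,g$ recorded just before the statement, together with $\DA\cong C(X_A)$, we have $\tfrac{1}{2\pi\sqrt{-1}}\delta(\rho^{A,g})_0=g$, so $\rho^{A,g}\geq0$ holds exactly when $g$ is a positive operator in $\DA$, i.e.\ when $g(x)\geq0$ for every $x\in X_A$. Feeding this into the definition of the order on $\ActA/\sim$ and using surjectivity and Lemma~\ref{lem:HA}: the statement $[\rho^{A,f}]\geq0$ means some $\alpha'\sim\rho^{A,f}$ satisfies $\alpha'\geq0$; writing $\alpha'=\rho^{A,g}$ this says there is a nonnegative $g\in C(X_A,\Z)$ with $[g]=[f]$, which is exactly $[f]\in H^A_+$. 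Hence the bijection carries $H^A_+$ onto the positive cone of $\ActA/\sim$, finishing the ordered isomorphism.

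I expect the only delicate point to be the bookkeeping around the two ``up to equivalence'' notions of positivity: both $H^A_+$ and the relation $[\alpha]\geq0$ are defined by the \emph{existence} of a suitable representative inside an equivalence class, so the argument must keep these quantifiers aligned and repeatedly invoke Lemma~\ref{lem:HA} to pass between cohomology classes and $\sim$-classes. Beyond that, the proposition is a direct transcription of the group isomorphism through the $\delta$-computation.
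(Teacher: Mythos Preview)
Your proposal is correct and follows exactly the route the paper intends: the paper does not write out a separate proof for this proposition but simply records the identity $\delta(\rho^{A,f})_0 = 2\pi\sqrt{-1}\,f$ and lets the result follow from Proposition~\ref{prop:abelgr} and Lemma~\ref{lem:HA}, which is precisely what you have unpacked in detail. Your explicit verification that $\sim$ is a congruence and your careful alignment of the two existential definitions of positivity are sound and only make the implicit argument rigorous.
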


\section{Continuous orbit equivalence and circle actions}
Let
$h : X_A \rightarrow X_B$
be a homeomorphism
which gives rise to a continuous orbit equivalence
between
$(X_A, \sigma_A)$
and
$(X_B,\sigma_B)$.
Take $k_1, l_1 \in C(X_A,\Zp)$
satisfying \eqref{eq:orbiteq1x}.
For $f \in C(X_B,\Z )$, define
$\Psi_{h}(f) \in C(X_A,\Z)$ 
by the formula
\eqref{eq:Psihfx}.
The map
$\Psi_h: C(X_B,\Z) \rightarrow C(X_A,\Z)$
does not depend on the choice of the functions 
$k_1, l_1$ 
as long as they are satisfying \eqref{eq:orbiteq1x}
(\cite[Lemma 4.2]{MMPre2014}).
Thus $\Psi_h: C(X_B,\Z) \rightarrow C(X_A,\Z)$
gives rise to a homomorphism of abelian groups,
which is actually an isomorphism (\cite[Proposition 4.5]{MMPre2014}).
As the equality
$\Psi_h(f - f \circ\sigma_B) = f \circ h - f \circ h \circ \sigma_A$
holds,
$\Psi_h$ induces a homomorphism
from $H^B$ to $H^A$
which yields an isomorphism of ordered groups
(\cite{MMKyoto}, \cite{MMPre2014}).
For $f \in C(X_A,\Z)$ 
and $n \in \Zp$, let us denote by $f^n$ 
the function 
$f^n(x) =\sum_{i=0}^{n-1}f (\sigma_A^i(x)), x \in X_A$.
The following identity 
is useful in our further discussions.
\begin{lemma}
For a word
$\mu = (\mu_1,\dots,\mu_n) \in B_n(X_A)$,
we have
\begin{equation}
\rho_t^{A,f}(S_\mu) = U_t(f^n)S_\mu,
\qquad f \in C(X_A,\Z),\, t \in \RZ.
\end{equation}
\end{lemma}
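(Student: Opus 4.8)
The plan is to prove the formula
\[
\rho_t^{A,f}(S_\mu) = U_t(f^n)S_\mu
\]
by induction on the length $n = |\mu|$, using the defining relation $\rho_t^{A,f}(S_i) = U_t(f)S_i$ together with the commutation identity \eqref{eq:siu}, namely $S_i U_t(g) = U_t(g\circ\sigma_A) S_i$. The base case $n=1$ is immediate from the definition, since $f^1 = f$.

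For the inductive step, I would write $\mu = (\mu_1,\dots,\mu_n)$ and split $S_\mu = S_{\mu_1} S_{\mu'}$ where $\mu' = (\mu_2,\dots,\mu_n)$ is the admissible word of length $n-1$ obtained by deleting the first symbol. Applying the homomorphism property of the automorphism $\rho_t^{A,f}$ gives
\[
\rho_t^{A,f}(S_\mu) = \rho_t^{A,f}(S_{\mu_1}) \, \rho_t^{A,f}(S_{\mu'})
= U_t(f) S_{\mu_1} \, U_t((f')^{n-1}) S_{\mu'},
\]
where $f'$ denotes $f$ viewed with the shift applied appropriately and the induction hypothesis has been used on $S_{\mu'}$. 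The key step is then to commute $U_t((f)^{n-1})$ past $S_{\mu_1}$ using \eqref{eq:siu}, which introduces a composition with $\sigma_A$.

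The heart of the matter is the bookkeeping of the shift in the cocycle exponent. Using $S_{\mu_1} U_t(g) = U_t(g\circ\sigma_A) S_{\mu_1}$ with $g$ the function $\sum_{i=0}^{n-2} f\circ\sigma_A^i$ coming from the inductive hypothesis applied to the length-$(n-1)$ word, I obtain
\[
\rho_t^{A,f}(S_\mu) = U_t(f) \, U_t\Bigl(\textstyle\sum_{i=0}^{n-2} f\circ\sigma_A^{i+1}\Bigr) \, S_{\mu_1} S_{\mu'}
= U_t\Bigl(f + \textstyle\sum_{i=1}^{n-1} f\circ\sigma_A^{i}\Bigr) S_\mu,
\]
where the multiplicativity $U_t(f_1)U_t(f_2) = U_t(f_1+f_2)$ from Proposition \ref{prop:abelgr} has been applied. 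The telescoping of the indices shows that the exponent is exactly $\sum_{i=0}^{n-1} f\circ\sigma_A^i = f^n$, completing the induction.

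I expect the \emph{main obstacle} to be purely notational rather than conceptual: one must be careful that the inductive hypothesis applied to the truncated word $\mu'$ produces the partial sum $\sum_{i=0}^{n-2} f\circ\sigma_A^i$ and that conjugating by $S_{\mu_1}$ via \eqref{eq:siu} shifts every index up by one, so that reassembling with the leading factor $U_t(f)$ recovers the full sum defining $f^n$. There is no genuine analytic difficulty here, since the unitaries $U_t(\cdot)$ all lie in the commutative algebra $\DA$ and their products add exponents by Proposition \ref{prop:abelgr}; the only place requiring attention is applying the commutation relation \eqref{eq:siu} to the single generator $S_{\mu_1}$ rather than to all of $S_\mu$ at once.
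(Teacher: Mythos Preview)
Your proof is correct and follows essentially the same approach as the paper: both arguments reduce to repeated application of the commutation identity \eqref{eq:siu} and the additivity $U_t(f_1)U_t(f_2)=U_t(f_1+f_2)$. The paper simply writes out $\rho_t^{A,f}(S_{\mu_1}\cdots S_{\mu_n}) = U_t(f)S_{\mu_1}\cdots U_t(f)S_{\mu_n}$ and pushes the $U_t(f)$ factors leftward one at a time (starting from the right), while you organize the same telescoping as an induction on $|\mu|$; the content is identical.

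One minor remark: the notation $(f')^{n-1}$ with ``$f'$ denotes $f$ viewed with the shift applied appropriately'' is unnecessary and slightly misleading. The inductive hypothesis applied to $\mu'=(\mu_2,\dots,\mu_n)$ gives $\rho_t^{A,f}(S_{\mu'})=U_t(f^{n-1})S_{\mu'}$ with the \emph{same} $f$; no shift enters until you commute $U_t(f^{n-1})$ past $S_{\mu_1}$ via \eqref{eq:siu}, exactly as you do in the displayed computation that follows.
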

\begin{proof}
By the identity 
$S_i U_t(f)
=U_t(f\circ \sigma_A) S_i
$
as in 
\eqref{eq:siu},
we have 
\begin{align*}
\rho_t^{A,f}(S_{\mu_1 \cdots \mu_n}) 
& = U_t(f)S_{\mu_1}\cdots  U_t(f)S_{\mu_{n-2}}
      U_t(f)S_{\mu_{n-1}} U_t(f)S_{\mu_n} \\
& = U_t(f)S_{\mu_1} \cdots U_t(f)S_{\mu_{n-2}} 
      U_t(f + f\circ \sigma_A)S_{\mu_{n-1}} S_{\mu_n} \\
& = U_t(f + f\circ \sigma_A + \cdots +f\circ \sigma_A^{n-1})
S_{\mu_1}\cdots S_{\mu_{n-2}} S_{\mu_{n-1}} S_{\mu_n} \\
& = U_t(f^n)S_\mu.
\end{align*}
\end{proof}

The following result is a generalization of 
\cite[2.17 Proposition]{CK}.
\begin{theorem}\label{thm:COETFAE}
If the one-sided topological Markov shifts
$(X_A, \sigma_A)$ and $(X_B,\sigma_B)$
are continuously orbit equivalent
via a homeomorphism $h:X_A\rightarrow X_B$,
then there exists an isomorphism
$\Phi:\OA \rightarrow \OB$ such that 
\begin{equation*}
\Phi(\DA) = \DB
\quad 
\text{  and }
\quad
\Phi \circ \rho^{A,\Psi_{h}(f)}_t = \rho^{B,f}_t \circ \Phi
\quad
\text{ for }
f \in C(X_B,\Z), \,\, 
 t \in \RZ.
\end{equation*}
\end{theorem}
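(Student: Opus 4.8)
The plan is to isolate the genuinely new content. The existence of some isomorphism $\Phi\colon\OA\to\OB$ with $\Phi(\DA)=\DB$ is exactly the implication (i)$\Rightarrow$(ii) recalled in the introduction, and the construction of \cite{MaPacific} produces such a $\Phi$ whose restriction to the diagonals $\DA\cong C(X_A)$, $\DB\cong C(X_B)$ is the $*$-isomorphism induced by the homeomorphism $h$ itself. I would fix this particular $\Phi$ and then, for $f\in C(X_B,\Z)$, set $\beta^f_t:=\Phi^{-1}\circ\rho^{B,f}_t\circ\Phi$. Because $\rho^{B,f}_t$ fixes $\DB$ pointwise and $\Phi(\DA)=\DB$, the action $\beta^f$ fixes $\DA$ pointwise, so $\beta^f\in\ActA$. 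By Proposition \ref{prop:abelgr} there is then a \emph{unique} $g_f\in C(X_A,\Z)$ with $\beta^f=\rho^{A,g_f}$, and $f\mapsto g_f$ is automatically a group homomorphism. The whole theorem thereby reduces to the single identity $g_f=\Psi_h(f)$, since that identity is precisely $\rho^{A,\Psi_h(f)}_t=\Phi^{-1}\circ\rho^{B,f}_t\circ\Phi$.

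To compute $g_f$ I would pass to the groupoid picture, writing $\OA=C^*(G_A)$ for the Deaconu--Renault groupoid $G_A=\{(x,n,y)\mid \sigma_A^k(x)=\sigma_A^l(y),\ n=k-l\}$ with $\DA=C(G_A^{(0)})=C(X_A)$, in which $\rho^{A,f}$ is the dual action attached to the continuous $\Z$-cocycle $c^A_f(x,n,y)=f^k(x)-f^l(y)$; the lemma $\rho^{A,f}_t(S_\mu)=U_t(f^{|\mu|})S_\mu$ is exactly the generator-level shadow of this cocycle. Under the groupoid rigidity available because $A$ satisfies condition (I), the diagonal-preserving isomorphism $\Phi$ corresponds to a groupoid isomorphism $\varphi\colon G_A\to G_B$ whose map on units is $h$, and conjugation by $\Phi$ transports $c^B_f$ to a continuous $\Z$-cocycle $c^B_f\circ\varphi$ on $G_A$; thus $g_f$ is the unique function with $c^A_{g_f}=c^B_f\circ\varphi$. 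A continuous $\Z$-cocycle on $G_A$ is determined by its values on the generating elements $(x,1,\sigma_A(x))$, so it suffices to evaluate there. The orbit relation \eqref{eq:orbiteq1x} reads $\sigma_B^{l_1(x)}(h(x))=\sigma_B^{k_1(x)}(h(\sigma_A(x)))$, whence $\varphi(x,1,\sigma_A(x))=(h(x),\,l_1(x)-k_1(x),\,h(\sigma_A(x)))$ and $c^B_f\circ\varphi$ evaluated there equals $\sum_{i=0}^{l_1(x)-1}f(\sigma_B^i(h(x)))-\sum_{j=0}^{k_1(x)-1}f(\sigma_B^j(h(\sigma_A(x))))$, which is precisely $\Psi_h(f)(x)$ by \eqref{eq:Psihfx}. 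Since $c^A_{g_f}(x,1,\sigma_A(x))=g_f(x)$, this gives $g_f=\Psi_h(f)$, as required.

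The hard part is the passage from a diagonal-preserving $*$-isomorphism to the groupoid isomorphism $\varphi$, together with the verification that its unit map is $h$ and that its degree shift on the generator is $l_1-k_1$; once this is in place the identity $g_f=\Psi_h(f)$ is a one-line cocycle evaluation. If one prefers to stay inside $\OA,\OB$ and avoid groupoids, the same content must be extracted by hand: one writes the explicit form of $\Phi(S_i)$ coming from \cite{MaPacific} (a finite sum of words $T_\mu T_\nu^*$ in the generators of $\OB$, indexed by a clopen partition of $X_A$ on which $k_1,l_1$ are constant), applies $\rho^{B,f}_t$ via $\rho^{B,f}_t(T_\mu)=U_t(f^{|\mu|})T_\mu$ to convert the shifts by $k_1$ and $l_1$ into the two Birkhoff sums of \eqref{eq:Psihfx}, and reads off that $\Phi^{-1}\circ\rho^{B,f}_t\circ\Phi$ sends $S_i$ to $U_t(\Psi_h(f))S_i$. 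Either way, independence of $g_f$ from the partition and from the choice of $k_1,l_1$ is guaranteed in advance by Proposition \ref{prop:abelgr} together with the known fact that $\Psi_h$ itself does not depend on these choices.
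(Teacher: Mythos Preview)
Your proposal is correct, and your alternative ``stay inside $\OA,\OB$'' paragraph is essentially what the paper actually does. The paper represents $\OA,\OB$ concretely on $\ell^2(X_A),\ell^2(X_B)$, takes $\Phi=\Ad(U_h)$ with $U_he_x^A=e_{h(x)}^B$, writes out $\Phi(S_i^A)=\sum_{\nu,\xi}S_\nu^B{S_\xi^B}^*Q_\nu^{(i)}$ with $|\nu|=l_1$ and $|\xi|=K_1=k_1$, and then applies $\rho^{B,f}_t$ term by term, so that the lemma $\rho^{B,f}_t(S_\mu)=U_t(f^{|\mu|})S_\mu$ produces exactly the two Birkhoff sums in \eqref{eq:Psihfx}. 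There is no reduction via Proposition~\ref{prop:abelgr} and no groupoid language; the intertwining relation is verified vector by vector on the basis $\{e_y^B\}$.

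Your primary route through the Deaconu--Renault groupoid is a genuinely different and more conceptual organization: it isolates the content as the single cocycle identity $c^B_f\circ\varphi=c^A_{\Psi_h(f)}$, which makes the appearance of $\Psi_h$ inevitable rather than the result of a computation. The one place to be careful is the step you flag yourself, namely that $\varphi(x,1,\sigma_A(x))$ has second coordinate $l_1(x)-k_1(x)$. The orbit relation \eqref{eq:orbiteq1x} only shows that $(h(x),\,l_1(x)-k_1(x),\,h(\sigma_A(x)))$ lies in $G_B$; identifying it as the image under $\varphi$ requires either reading it off from the explicit form of $\Phi(S_i)$ in \cite{MaPacific} (which is exactly the paper's formula), or invoking essential principality under condition (I) so that on the dense set of aperiodic $h(x)$ the arrow is determined by its source and range, then extending by continuity. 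Either way this step ultimately rests on the same concrete information the paper uses directly, so your groupoid packaging clarifies the structure without eliminating the underlying computation; the paper's approach, in exchange, is entirely self-contained and avoids citing groupoid reconstruction.
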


\begin{proof}
The first part of the proof below follows essentially the proof of 
\cite[Proposition 5.6.]{MaPacific}
(cf. \cite[Proposition 6.3]{MaPre2014}).
We need the several notations used in the first part to proceed to
the second part, so that the first part
overlaps   
\cite[Proposition 5.6.]{MaPacific}
and
\cite[Proposition 6.3]{MaPre2014}.
Let us denote by 
$\HA$ (resp. $\HB$) the Hilbert space
with its complete orthonormal system
$\{ e_x^A \mid x \in X_A\}$ 
(resp. $\{ e_y^B \mid y \in X_B\}$).
Define the partial isometries 
$S_i^A, i=1,\dots,N$ on $\HA$
 by 
\begin{equation}
S_i^A e_x^A =
\begin{cases}
e_{ix}^A & \text{ if } ix \in X_A,\\
0 & \text{ otherwise,}
\end{cases}
\label{eq:siAex}
\end{equation} 
which 
satisfy the relations
\eqref{eq:CK}. 
For the $M \times M$ matrix $B =[B(i,j)]_{i,j=1}^M$,
define the 
partial isometries  $S_i^B, i=1,\dots,M$ 
on $\HB$
satisfying the relations
\eqref{eq:CK} for $B$
in a similar way.
The Cuntz--Krieger algebra 
$\OA$ (resp.  $\OB$) 
is identified with
the $C^*$-algebra 
$C^*(S_1^A,\dots, S_N^A)$  on $\HA$
(resp. $C^*(S_1^B,\dots, S_M^B)$  on $\HB$)
generated by $S_1^A,\dots, S_N^A$
(resp. $S_1^B,\dots, S_M^B$).
For the continuous function 
$
k_1:X_A \rightarrow \Zp
$ 
in \eqref{eq:orbiteq1x},
let
$K_1 = \Max\{k_1(x) \mid x \in X_A\}$.
By adding 
$K_1 - k_1(x)$ to both $k_1(x)$ and $l_1(x)$,
one may assume that
$k_1(x) = K_1$ for all $x \in X_A$.

We will see that
$\Phi = \Ad (U_h)$
defined by 
the unitary
$
U_h e_x^A = e_{h(x)}^B,
x \in X_A
$
satisfies the desired properties.
We fix $i \in \{1,\dots,N\}$
and
set
$X_B^{(i)} = \{ y \in X_B \mid ih^{-1}(y) \in X_A \}$.
For $y \in X_B^{(i)}$,
put
$z = ih^{-1}(y) \in X_A$.
As $h(\sigma_A(z)) =y$
with \eqref{eq:orbiteq1x}, 
one has
$h(z) 
\in  \sigma_B^{-l_1(z)}(\sigma_B^{K_1}(y))$
and 
\begin{equation}
h(z) = (\mu_1(z),\dots,\mu_{l_1(z)}(z), y_{K_1+1}, y_{K_1+2}, \dots ) 
\label{eq:hz}
\end{equation}
for some
$(\mu_1(z),\dots,\mu_{l_1(z)}(z)) \in B_{l_1(z)}(X_B)$.
Put
$
L_1= \Max\{l_1(z) \mid z = i h^{-1}(y), \, y \in X_B^{(i)} \}.
$
The set 
$
W^{(i)} =\{(\mu_1(z),\dots,\mu_{l_1(z)}(z)) \in B_{l_1(z)}(X_B) 
\mid z = i h^{-1}(y), \, y \in X_B^{(i)} \}
$
of words 
is a finite subset of
$\cup_{j=0}^{L_1}B_j(X_B)$.
For $\nu=(\nu_1,\dots, \nu_j) \in W^{(i)}$,
put the clopen set
$
E_\nu^{(i)}
$
 in $X_B^{(i)}$
$$
E_\nu^{(i)} =\{ y \in X_B^{(i)} 
\mid \mu_1(z) = \nu_1,\dots,\mu_{l_1(z)}(z) =\nu_j, \, z = i h^{-1}(y)  \}
$$
so that 
$
X_B^{(i)} 
= \cup_{\nu \in W^{(i)} }E_\nu^{(i)}.
$
Let
$
Q_\nu^{(i)} 
$
be
the characteristic function
$
\chi_{E_\nu^{(i)}}
$
on $X_B$
for the clopen sets 
$
E_\nu^{(i)}.
$
For $y \in X_B^{(i)}$ 
and
$ \nu \in W^{(i)}$,  
we have
$ y \in E_\nu^{(i)}$ 
if and only if
$Q_\nu^{(i)}e_y^B = e_y^B$.
By \eqref{eq:hz},
the equality
\begin{equation*}
e_{h(ih^{-1}(y))}^B 
= \sum_{\nu \in W^{(i)}}
 S_{\nu}^B \sum_{\xi\in B_{K_1}(X_B)}{S_\xi^B}^* Q_\nu^{(i)} e_y^B
 \quad
 \text{ for } y \in X_B^{(i)}
\end{equation*}
holds. 
As
$
U_h S_i^A U_h^* e_y^B 
= 
e_{h(ih^{-1}(y))}^B 
$
if
$y \in X_B^{(i)},$
and
$0$
otherwise, we have
\begin{equation*}
U_h S_i^A U_h^* e_y^B  
= \sum_{\nu \in W^{(i)}}\sum_{\xi\in B_{K_1}(X_B)}
 S_{\nu}^B {S_\xi^B}^* Q_\nu^{(i)} e_y^B
\qquad \text{ for } y \in X_B^{(i)}
\end{equation*}
so that 
\begin{equation*}
U_h S_i^A U_h^*  
= \sum_{\nu \in W^{(i)}} \sum_{\xi\in B_{K_1}(X_B)}
 S_{\nu}^B {S_\xi^B}^* Q_\nu^{(i)}.
\end{equation*}
As $ Q_\nu^{(i)}\in \DB$,
we have
$\Ad(U_h) (S_i^A) \in \OB$  
so that 
$\Ad(U_h) (\OA) \subset \OB.$  
Since $U_h^* = U_{h^{-1}}$, 
we symmetrically have
$\Ad(U_h^*) (\OB) \subset \OA$
so that 
$\Ad(U_h) (\OA) = \OB.$
It is direct to see that 
$\Ad(U_h) (f) = f \circ h^{-1}$ for $ f \in \DA$
from the definition $U_h e_x^A = e_{h(x)}^B, x \in X_A$
so that we have
$\Ad(U_h) (\DA) = \DB.$

We will next show that 
$
\Ad(U_h) \circ  \rho^{A,\Psi_h(f)}_t
= \rho^{B,f}_t \circ \Ad(U_h)
$
for $f \in C(X_B,\Z),
t \in \RZ.
$
It follows that 
\begin{equation}
(\rho^{B,f}_t \circ \Ad(U_h)) (S_i^A) 
 =  \sum_{\nu \in W^{(i)}} \sum_{\xi\in B_{K_1}(X_B)}
    \rho_t^{B,f}(S_{\nu}^B {S_\xi^B}^*) Q_\nu^{(i)}. \label{eq:rhoBtf}
\end{equation}
Since
$Q_\nu^{(i)}e_y^B \ne 0$ if and only if
$Q_\nu^{(i)}e_y^B  = e_y^B$ and $\nu_1 = \mu_1(z),\dots, \nu_j = \mu_{l_1(z)}(z)$.
For $y \in E_\nu^{(i)}$
with $(y_1,\dots,y_{K_1}) = (\xi_1,\dots,\xi_{K_1})$, 
we have 
$S_{\nu}^B {S_\xi^B}^* Q_\nu^{(i)}e_y^B = e_{h(z)}^B = e_{h(ih^{-1}(y))}^B$
and
$\nu \sigma_B^{K_1}(y) = h(z).$
As
\begin{equation*}
\rho_t^{B,f}({S_\xi^B}^*)e_y^B 
=\exp\{-2\pi \sqrt{-1}t f^{K_1}(y) \}{S_\xi^B}^* e_y^B 
=\exp\{-2\pi \sqrt{-1}t f^{K_1}(h(\sigma_A(z)))\}
e_{\sigma_B^{K_1}(y)}^B 
\end{equation*}
and
\begin{equation*}
\rho_t^{B,f}(S_{\nu}^B) e_{\sigma_B^{K_1}(y)}^B
  = U_t(f^{l_1(z)} )  e_{\nu \sigma_B^{K_1}(y)}^B   
  =\exp\{2\pi \sqrt{-1}t f^{l_1(z)} (h(z))\} e_{h(z)}^B, 
\end{equation*}
we have  
\begin{align*}
    \rho_t^{B,f}(S_{\nu}^B {S_\xi^B}^* Q_\nu^{(i)} ) e_y^B 
=  & \rho_t^{B,f}(S_{\nu}^B) \rho_t^{B,f}({S_\xi^B}^*)e_y^B \\
=  & \rho_t^{B,f}(S_{\nu}^B) 
     \exp\{-2\pi \sqrt{-1}t f^{K_1}(h(\sigma_A(z)))\}
     e_{\sigma_B^{K_1}(y)}^B  \\
=  & \exp\{-2\pi \sqrt{-1}t f^{K_1}(h(\sigma_A(z))) \}
     \rho_t^{B,f}(S_{\nu}^B) e_{\sigma_B^{K_1}(y)}^B  \\
=  & \exp\{-2\pi \sqrt{-1}t f^{K_1}(h(\sigma_A(z))) \}
     \exp\{2\pi \sqrt{-1}t f^{l_1(z)}(h(z)) \}
     e_{h(z)}^B \\
= & \exp\{2\pi \sqrt{-1}t(\Psi_h(f)(z))\} e_{h(z)}^B \\ 
= & U_h \exp\{2\pi \sqrt{-1}t\Psi_h(f)\} S_i^A e_{h^{-1}(y)}^A \\
= & U_h \rho_t^{A,\Psi_h(f)}(S_i^A ) U_h^* e_y^B. 
\end{align*}
By \eqref{eq:rhoBtf}, we get
\begin{equation*}
(\rho^{B,f}_t \circ \Ad(U_h)) (S_i^A) e_y^B
=  U_h \rho_t^{A,\Psi_h(f)}(S_i^A ) U_h^* e_y^B 
\end{equation*}
so that 
\begin{equation*}
\rho^{B,f}_t \circ \Ad(U_h)  
=\Ad(U_h) \circ  \rho_t^{A,\Psi_h(f)}
\quad \text{ for } t \in \RZ.
\end{equation*}
By setting 
$\Phi = \Ad(U_h): \OA\rightarrow \OB$,
we have the desired isomorphism.
\end{proof}
In the proof of \cite[Theorem 5.7]{MaPacific}
we see  that if there exists an isomorphism
$\Phi:\OA \rightarrow \OB$   
such that $\Phi(\DA) = \DB$,
we can take a homeomorphism
$h:X_A\rightarrow X_B$ which gives rise to a continuous orbit equivalence
between $(X_A,\sigma_A)$ and 
$(X_B,\sigma_B)$
and satisfies
$\Phi(f) = f \circ h^{-1}$
for $f \in \DA$.
We apply Theorem \ref{thm:COETFAE} to 
obtain the following theorem.
\begin{theorem} \label{thm:fg}
Let $f\in C(X_B,\Z),g \in C(X_A,\Z)$.
\begin{enumerate}
\renewcommand{\theenumi}{\roman{enumi}}
\renewcommand{\labelenumi}{\textup{(\theenumi)}}
\item
There exists an isomorphism
$\Phi:\OA\rightarrow \OB$ satisfying  
\begin{equation}
\Phi(\DA)=\DB
\quad
\text{ and }
\quad
\Phi\circ \rho^{A,g}_t = 
\rho^{B,f}_t\circ \Phi, \, \qquad t \in \RZ \label{eq:PhiAgBf}
\end{equation}
if and only if there exists a homeomorphism
$h: X_A\rightarrow X_B$ 
which gives rise to a continuous orbit equivalence
between $(X_A,\sigma_A)$ and $(X_B,\sigma_B)$
such that
$\Psi_h(f) =g$.
\item
There exist an 
isomorphism
$\Phi:\OA\rightarrow \OB$   
and
 a unitary one-cocycle 
$v_t, \, t \in \RZ$ 
in $\OA$ relative to
$\rho^{A,g}$
 satisfying  
\begin{equation}
\Phi(\DA) =\DB
\quad
\text{ and }
\quad
\Phi\circ \Ad(v_t) \circ \rho^{A,g}_t 
= 
\rho^{B,f}_t\circ \Phi, \, \qquad t \in \RZ
\label{eq:cocyclePhiAgBf}
\end{equation}
if and only if there exists a homeomorphism
$h: X_A\rightarrow X_B$ 
which gives rise to a continuous orbit equivalence
between $(X_A,\sigma_A)$ and $(X_B,\sigma_B)$
such that
$[\Psi_h(f)] =[g]$ in $H^A$.
\end{enumerate}
\end{theorem}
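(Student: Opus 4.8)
The plan is to obtain (i) from Theorem~\ref{thm:COETFAE} and its converse \cite[Theorem 5.7]{MaPacific}, and then to reduce (ii) to (i) using Lemma~\ref{lem:HA}.

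\emph{Part (i).} The ``if'' direction is immediate: if $h$ is a continuous orbit equivalence with $\Psi_h(f)=g$, then Theorem~\ref{thm:COETFAE} produces $\Phi$ with $\Phi(\DA)=\DB$ and $\Phi\circ\rho^{A,\Psi_h(f)}_t=\rho^{B,f}_t\circ\Phi$, and substituting $\Psi_h(f)=g$ gives \eqref{eq:PhiAgBf}. For the ``only if'' direction, suppose $\Phi$ satisfies \eqref{eq:PhiAgBf}. Since $\Phi(\DA)=\DB$, the proof of \cite[Theorem 5.7]{MaPacific} recalled before the theorem yields a homeomorphism $h\colon X_A\to X_B$ implementing a continuous orbit equivalence with $\Phi(a)=a\circ h^{-1}$ for $a\in\DA$. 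I would then compare $\Phi$ with the canonical isomorphism $\Phi_0=\Ad(U_h)$ built in the proof of Theorem~\ref{thm:COETFAE}, which also restricts to $a\mapsto a\circ h^{-1}$ on $\DA$ and satisfies $\Phi_0\circ\rho^{A,\Psi_h(f)}_t=\rho^{B,f}_t\circ\Phi_0$. Setting $\Psi=\Phi_0^{-1}\circ\Phi\in\Aut(\OA)$, the agreement of $\Phi$ and $\Phi_0$ on $\DA$ gives $\Psi(\DA)=\DA$ and $\Psi|_{\DA}=\id$, and eliminating $\rho^{B,f}$ from the two intertwining identities yields $\Psi\circ\rho^{A,g}_t=\rho^{A,\Psi_h(f)}_t\circ\Psi$ for all $t\in\RZ$.

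The decisive step is to upgrade this conjugacy to the pointwise equality $g=\Psi_h(f)$. For this I would use the structural fact already invoked in Section~2 (from \cite[Lemma 4.6, Corollary 4.7]{MaJOT2000}): an automorphism of $\OA$ fixing $\DA$ pointwise carries each $S_i$ to $wS_i$ for a single unitary $w\in\U(\DA)$. Applying $\Psi\circ\rho^{A,g}_t=\rho^{A,\Psi_h(f)}_t\circ\Psi$ to $S_i$, writing $\Psi(S_i)=wS_i$, and using that $w$, $U_t(g)$ and $U_t(\Psi_h(f))$ all lie in the abelian algebra $\DA$, one cancels the unitary $w$ and sums against $S_i^*$ over $i$ to get $U_t(g)=U_t(\Psi_h(f))$ for every $t\in\RZ$; differentiating at $t=0$ forces the two integer-valued functions to agree, so $g=\Psi_h(f)$. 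I expect this passage from ``conjugate actions'' to ``equal cocycle functions'' to be the only genuine obstacle, and it rests entirely on the diagonal-scalar normal form of $\Psi$.

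\emph{Part (ii).} I would reduce this to (i) together with Lemma~\ref{lem:HA}. Given $\Phi$ and a one-cocycle $v_t$ relative to $\rho^{A,g}$ satisfying \eqref{eq:cocyclePhiAgBf}, the action $\alpha_t:=\Ad(v_t)\circ\rho^{A,g}_t$ fixes $\DA$ pointwise, because $\rho^{B,f}$ fixes $\DB=\Phi(\DA)$; hence by the lemma characterizing $\ActA$ one has $\alpha=\rho^{A,g'}$ for some $g'\in C(X_A,\Z)$, and since $\alpha\sim\rho^{A,g}$ while $\DA$ is maximal abelian, $v_t\in\DA$ is a representation and Lemma~\ref{lem:HA} gives $[g']=[g]$. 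As $\Phi$ now intertwines $\rho^{A,g'}$ with $\rho^{B,f}$, part (i) furnishes a continuous orbit equivalence $h$ with $\Psi_h(f)=g'$, so $[\Psi_h(f)]=[g]$ in $H^A$. Conversely, given $h$ with $[\Psi_h(f)]=[g]$, Theorem~\ref{thm:COETFAE} supplies $\Phi$ intertwining $\rho^{A,\Psi_h(f)}$ with $\rho^{B,f}$, and Lemma~\ref{lem:HA} supplies $u\in\RepDA$ with $\rho^{A,\Psi_h(f)}_t=\Ad(u_t)\circ\rho^{A,g}_t$; taking $v_t:=u_t$, which is a one-cocycle relative to $\rho^{A,g}$ since $\rho^{A,g}$ fixes $\DA$, yields \eqref{eq:cocyclePhiAgBf}.
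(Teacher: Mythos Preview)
Your proposal is correct and follows essentially the same route as the paper's proof: for (i) you compare $\Phi$ with the canonical isomorphism $\Ad(U_h)$, use that the resulting automorphism fixing $\DA$ has the form $S_i\mapsto wS_i$ with $w\in\U(\DA)$ (the paper cites \cite[Theorem 6.5(1)]{MaPacific} for this rather than \cite{MaJOT2000}, but the content is the same), and cancel to get $U_t(g)=U_t(\Psi_h(f))$; for (ii) you reduce to (i) via Lemma~\ref{lem:HA} exactly as the paper does, only packaging the ``only if'' direction through Lemma~2.1 instead of writing out $v_t=U_t(b)$ explicitly. These are cosmetic differences; the argument is the same.
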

\begin{proof}
(i) 
The if part follows from Theorem \ref{thm:COETFAE}.
We will show the only if part.
Represent the algebras 
$\OA$ on $\HA$ and $\OB$ on $\HB$
as in the proof of Theorem \ref{thm:COETFAE}.
Suppose that there exists an isomorphism
$\Phi:\OA\rightarrow \OB$ satisfying
\eqref{eq:PhiAgBf}.
Take a homeomorphism
$h:X_A\rightarrow X_B$ which gives rise to a continuous orbit equivalence
between $(X_A,\sigma_A)$ 
and 
$(X_B,\sigma_B)$
and satisfies
$\Phi(f) = f \circ h^{-1}$
for $f \in \DA$.
Let $\varPhi_h: \OA \rightarrow \OB$
be the isomorphism $\Ad(U_h)$
defined in the proof of Theorem \ref{thm:COETFAE}
so that
\begin{equation}
\varPhi_h(\DA)=\DB
\quad
\text{ and }
\quad
\varPhi_h\circ \rho_t^{A,\Psi_h(f)} = 
\rho^{B,f}_t\circ \varPhi_h, \, \qquad t \in \RZ \label{eq:varPhiAgBf}.
\end{equation}
By the construction of $\varPhi_h$, we know that
$\varPhi_h(f) = f \circ h^{-1}$
for $f \in \DA$.
Hence the automorphism
$\alpha = \varPhi_h^{-1}\circ \Phi $ on $\OA$
satisfies $\alpha|_{\DA} = \id$.
By
\cite[Theorem 6.5 (1)]{MaPacific},
there exists a unitary one-cocycle
$V_\alpha$ in $\DA$ relative to $\alpha$ such that
$\alpha(S_\mu) = V_\alpha(k) S_\mu$ for $\mu \in B_k(X_A)$.
Hence
$\alpha(S_i) = V_\alpha(1)S_i$,
$V_\alpha(1) \in \DA$ 
so that
$\Phi(S_i) = \varPhi_h(V_\alpha(1)S_i)$ 
for $i=1,\dots,N$.
By \eqref{eq:PhiAgBf} and \eqref{eq:varPhiAgBf}, we see
\begin{equation*}
\Phi\circ \rho^{A,g}_t(S_i) 
= 
(\varPhi_h\circ \rho_t^{A,\Psi_h(f)}\circ
\varPhi_h^{-1}) \circ \Phi(S_i), \qquad t \in \RZ,
\, i=1,\dots, N. \label{eq:PhivarPhiAgBf}
\end{equation*}
The left hand side above equals
\begin{equation*}
\Phi(\rho^{A,g}_t(S_i)) 
= 
\Phi(U_t(g)S_i)
= 
\Phi(U_t(g)) \Phi(S_i).
\end{equation*}
The right hand side above equals
\begin{align*}
& (\varPhi_h\circ \rho_t^{A,\Psi_h(f)}\circ
\varPhi_h^{-1}) \circ \Phi(S_i) \\
= & \varPhi_h(V_\alpha(1) \rho_t^{A,\Psi_h(f)}(S_i)) 
=  \varPhi_h(U_t(\Psi_h(f)) ) \varPhi_h(V_\alpha(1) S_i)
=  \varPhi_h(U_t(\Psi_h(f)) ) \Phi(S_i). 
\end{align*}
Hence we have
$
\Phi(U_t(g)) \Phi(S_i)
= 
\varPhi_h(U_t(\Psi_h(f)) ) \Phi(S_i).
$
As both $U_t(g), U_t(\Psi_h(f))$
belong to $\DA$ and
$\Phi = \varPhi_h $ on $\DA$,
we obtain that 
$U_t(g) = U_t(\Psi_h(f))$
 and
hence $g =  \Psi_h(f)$.

(ii) 
We first show the if part.
Suppose that
$[\Psi_h(f)] =[g]$.
We may take $b \in C(X_A,\Z)$ such that 
$\Psi_h(f) = g + b -b\circ\sigma_A$.
By (i),  
there exists an isomorphism
$\Phi: \OA \rightarrow \OB$
such that
$\Phi(\DA) = \DB$
and
$
\Phi\circ \rho^{A,g+b-b\circ\sigma_A}_t 
= 
\rho^{B,f}_t\circ \Phi.
$
Since the  unitary 
$v_t = U_t(b)$
satisfies 
$
\Ad(v_t) \circ
\rho^{A,g}_t
=\rho^{A,g+b-b\circ\sigma_A}_t, 
$ 
the equality
\eqref{eq:cocyclePhiAgBf}
holds.

For the proof of the only if part, 
assume that there exist an 
isomorphism
$\Phi:\OA\rightarrow \OB$   
and
 a unitary one-cocycle 
$v_t, \, t \in \RZ$ 
in $\OA$ relative to
$\rho^{A,g}$
 satisfying the equality \eqref{eq:cocyclePhiAgBf}.
As $\rho^{A,g}_t|_{\DA} = \id$ and
$\rho^{B,f}_t|_{\DB} = \id$,
the equality \eqref{eq:cocyclePhiAgBf} 
implies that $\Phi(v_t)$ 
commutes with elements of 
$\Phi(\DA) = \DB$.
Since $\DB$ is maximal abelian in $\OB$,
the unitaries
$\Phi(v_t), t \in \RZ$ belong to $\DB$
and hence
$v_t, t \in \RZ$ belong to $\DA$
which satisfy
$\rho^{A,g}_s(v_t) = v_t$ for $s,t \in \RZ$.
The map $t \in \RZ \rightarrow v_t \in \DA$
then yields a unitary representation of $\RZ$.
Take $b \in C(X_A,\Z)$ 
such that 
$v_t = U_t(b)$
and hence 
 the equality
$
\Phi\circ \rho^{A,g+b-b\circ\sigma_A}_t 
= 
\rho^{B,f}_t\circ \Phi
$
follows.
By (i), there exists a homeomorphism
$h:X_A \rightarrow X_B$ which gives rise to a continuous orbit equivalence
between $(X_A,\sigma_A)$ and $(X_B,\sigma_B)$
such that
$\varPhi_h(f) = g+b-b\circ\sigma_A$
so that 
$[\Psi_h(f) ] =[g]$.
\end{proof}
Under the assumption of Theorem \ref{thm:COETFAE},
let us define the functions 
$c_1 \in C(X_A,\Z)
$ and
$ 
 c_2 \in C(X_B,\Z)
$
 by 
$c_1(x) = l_1(x) -k_1(x), x \in X_A$
and
$c_2(y) = l_2(y) -k_2(y), y \in X_B$,
respectively,
They are called the cocycle functions 
which play a key r{\^{o}}le in \cite{MaPre2014} and \cite{MMPre2014}.
They do not depend on the choices of the functions
$k_1, l_1$ and $k_2,l_2$ as long as they are satisfying 
\eqref{eq:orbiteq1x} and \eqref{eq:orbiteq2y},
respectively (\cite[Lemma 4.1]{MMPre2014}).
By the formula \eqref{eq:Psihfx},
we know $\Psi_h(1_B) = c_1$ and similarly 
$\Psi_{h^{-1}}(1_A) = c_2$,
where 
$1_A, 1_B$ are the constant functions on $X_A, X_B$ taking its values $1$,
respectively.
The actions 
$\rho_t^{A,1_A}$ for the constant function $1_A$
and
$\rho_t^{B,1_B}$ for the constant function $1_B$
are the gauge actions 
$\rho_t^A$ on $\OA$ and 
$\rho_t^B$ on $\OB$, respectively.
Theorem \ref{thm:fg} (i) implies the following corollary.
\begin{corollary}\label{coro:COEgauge}
The one-sided topological Markov shifts
$(X_A, \sigma_A)$ and $(X_B,\sigma_B)$
are continuously orbit equivalent
if and only if  there exist an isomorphism
$\Phi:\OA \rightarrow \OB$
and continuous functions
$c_1:X_A \rightarrow \Z,$
$c_2:X_B \rightarrow \Z,$
 such that 
\begin{equation*}
\Phi(\DA) = \DB
\quad 
\text{  and }
\quad
\Phi \circ \rho^{A}_t = \rho^{B,c_2}_t \circ \Phi,
\quad
\Phi^{-1} \circ \rho^{B}_t = \rho^{A,c_1}_t \circ \Phi
\quad
\text{ for }
 t \in \RZ.
\end{equation*}
\end{corollary}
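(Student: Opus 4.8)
The plan is to prove both implications, with the forward direction carrying all the content and the reverse direction being essentially free. For the reverse direction, suppose an isomorphism $\Phi$ together with $c_1,c_2$ satisfying the stated conditions exists. In particular $\Phi(\DA)=\DB$ and $\Phi\circ\rho^A_t=\rho^{B,c_2}_t\circ\Phi$, which is an instance of the hypothesis of the ``only if'' part of Theorem \ref{thm:fg}(i) with $g=1_A$ and $f=c_2$; it therefore produces a homeomorphism $h:X_A\to X_B$ implementing a continuous orbit equivalence (with $\Psi_h(c_2)=1_A$). I note in passing that the circle-action relations are not even needed here: by the equivalence (i)$\Leftrightarrow$(ii) recalled in the Introduction, the single requirement $\Phi(\DA)=\DB$ already forces continuous orbit equivalence.

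For the forward direction, suppose $h:X_A\to X_B$ implements a continuous orbit equivalence. I would \emph{not} apply Theorem \ref{thm:fg}(i) twice, since that would yield two a priori different isomorphisms; instead I would take the single isomorphism $\Phi=\Ad(U_h)$ furnished by Theorem \ref{thm:COETFAE}, which has $\Phi(\DA)=\DB$ and intertwines $\rho^{A,\Psi_h(f)}$ with $\rho^{B,f}$ for \emph{every} $f\in C(X_B,\Z)$ simultaneously. The proof then reduces to specializing $f$ to two functions. Recalling from the discussion preceding the corollary that $\Psi_h(1_B)=c_1$, the choice $f=1_B$ gives $\Phi\circ\rho^{A,c_1}_t=\rho^{B,1_B}_t\circ\Phi=\rho^B_t\circ\Phi$, which rearranges to $\Phi^{-1}\circ\rho^B_t=\rho^{A,c_1}_t\circ\Phi^{-1}$, the second asserted relation.

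For the first relation I would take $f=c_2$. The key identity is $\Psi_h(c_2)=1_A$: since $c_2=\Psi_{h^{-1}}(1_A)$ by the discussion preceding the corollary, and $\Psi_h=(\Psi_{h^{-1}})^{-1}$ as mutually inverse isomorphisms of the groups $C(\cdot,\Z)$ (recalled in the Introduction), we get $\Psi_h(c_2)=\Psi_h(\Psi_{h^{-1}}(1_A))=1_A$. Feeding $f=c_2$ into the intertwining relation of Theorem \ref{thm:COETFAE} then yields $\Phi\circ\rho^{A,1_A}_t=\rho^{B,c_2}_t\circ\Phi$, that is $\Phi\circ\rho^A_t=\rho^{B,c_2}_t\circ\Phi$, as required.

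The main (indeed essentially the only) obstacle is the bookkeeping point that both intertwining relations must hold for one and the same $\Phi$; this is precisely why I appeal to the ``for all $f$'' form of Theorem \ref{thm:COETFAE} rather than to the single-pair statement of Theorem \ref{thm:fg}(i). The remaining input, $\Psi_h(c_2)=1_A$, is a one-line consequence of $\Psi_h$ and $\Psi_{h^{-1}}$ being mutual inverses, and everything else is just the substitutions $f=1_B$ and $f=c_2$ together with the identifications $\rho^{A,1_A}=\rho^A$ and $\rho^{B,1_B}=\rho^B$.
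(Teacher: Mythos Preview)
Your proof is correct and follows essentially the same route as the paper, which merely records that the corollary is implied by Theorem~\ref{thm:fg}(i); since the ``if'' part of Theorem~\ref{thm:fg}(i) is itself proved by invoking Theorem~\ref{thm:COETFAE} (hence via the single isomorphism $\Phi=\Ad(U_h)$ valid for all $f$ simultaneously), your direct appeal to Theorem~\ref{thm:COETFAE} with the two specializations $f=1_B$ and $f=c_2$ is exactly the intended argument, just spelled out explicitly. Your bookkeeping observation---that one must secure a single $\Phi$ for both intertwining relations---is well taken, and your justification via $\Psi_h(c_2)=\Psi_h(\Psi_{h^{-1}}(1_A))=1_A$ is the right one; note also that the displayed relation in the corollary should read $\Phi^{-1}\circ\rho^B_t=\rho^{A,c_1}_t\circ\Phi^{-1}$, as you have it.
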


In the case  $f=1_B, g=1_A$ 
in Theorem \ref{thm:fg},
the first assertion (i) describes
the commutativity of the gauge actions.
We say $(X_A,\sigma_A)$ and $(X_B,\sigma_B)$
 to be {\it eventually one-sided conjugate\/}
 if there exist a homeomorphism
$h:X_A\rightarrow X_B$ and continuous functions
$k_1:X_A\rightarrow \Z$
and
$k_2:X_B\rightarrow \Z$
such that
\begin{align}
\sigma_B^{k_1(x)} (h(\sigma_A(x))) 
& = \sigma_B^{k_1(x)+1}(h(x))
\quad 
\text{ for} \quad 
x \in X_A,  \label{eq:oneorbiteq1x} \\
\sigma_A^{k_2(y)} (h^{-1}(\sigma_B(y))) 
& = \sigma_A^{k_2(y)+1}(h^{-1}(y))
\quad 
\text{ for } \quad 
y \in X_B. \label{eq:oneorbiteq2y}
\end{align}  
If the conditions \eqref{eq:oneorbiteq1x} and \eqref{eq:oneorbiteq2y}
are fulfilled, 
one may take the functions  
$k_1, k_2$ to be the constant functions 
taking its values 
$K =\Max \{k_1(x), k_2(y) \mid x \in X_A, y \in X_B\}$.
As $\Psi_{h^{-1}} = (\Psi_h)^{-1}$,
the condition 
$\Psi_h(1_B) = 1_A$
is equivalent to
the condition 
$\Psi_{h^{-1}}(1_A) = 1_B$.
The latter and hence the former conditions
are equivalent to the conditions 
$l_1(x) = k_1(x) +1$ and 
$l_2(y) = k_2(y) +1$
by the  definition of $\Psi_h, \Psi_{h^{-1}}$. 
This means that the eventual one-sided conjugacy 
is equivalent to the condition
$\Psi_h(1_B) = 1_A$.
As a more general equivalence relation,
a notion of strongly continuous orbit equivalence 
between one-sided topological Markov shifts 
has been introduced in \cite{MaPre2014}
which is stronger than 
continuous orbit equivalence
but weaker than  eventual one-sided conjugacy. 
Two one-sided topological Markov shifts
$(X_A, \sigma_A)$ and $(X_B,\sigma_B)$
are said to be strongly continuous orbit equivalent
if there exists 
a homeomorphism
$h:X_A \rightarrow X_B$
giving  rise to a continuous orbit equivalence 
such that 
$[\Psi_h(1_B)] =[1_A]$ in $H^A$,
which is equivalent to the condition
$[\Psi_{h^{-1}}(1_A)] =[1_B]$ in $H^B$
(\cite[Lemma 4.3]{MaPre2014}).
We then have the following corollary of Theorem \ref{thm:fg}.
\begin{corollary}[{cf. \cite[2,17. Proposition]{CK},
\cite[Theorem 6.7]{MaPre2014}}] \label{coro:evstcoe}
Let $\rho^A$ (resp. $\rho^B$)
be the gauge action on $\OA$ (resp. $\OB$).
\begin{enumerate}
\renewcommand{\theenumi}{\roman{enumi}}
\renewcommand{\labelenumi}{\textup{(\theenumi)}}
\item
There exists an isomorphism
$\Phi:\OA \rightarrow \OB$ 
such that 
\begin{equation*}
\Phi(\DA) = \DB
\quad 
\text{  and }
\quad
\Phi \circ \rho^A_t = \rho^B_t \circ \Phi, 
\qquad t \in {\RZ}
\end{equation*}
if and only if 
$(X_A, \sigma_A)$ and $(X_B,\sigma_B)$
are eventually one-sided conjugate.
\item
There exist an isomorphism
$\Phi:\OA \rightarrow \OB$ 
and a unitary representation $v$ of $\RZ$ to $\DB$
such that 
\begin{equation*}
\Phi(\DA) = \DB
\quad 
\text{  and }
\quad
\Phi \circ \rho^A_t = \Ad(v_t)\circ \rho^B_t \circ \Phi, 
\qquad t \in {\RZ}
\end{equation*}
if and only if 
$(X_A, \sigma_A)$ and $(X_B,\sigma_B)$
are strongly continuous orbit equivalent.
\end{enumerate}
\end{corollary}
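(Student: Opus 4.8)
The plan is to deduce both statements directly from Theorem~\ref{thm:fg} by specializing it to the constant functions $f = 1_B$ and $g = 1_A$, for which $\rho^{A,1_A} = \rho^A$ and $\rho^{B,1_B} = \rho^B$ are precisely the gauge actions. Two translations of hypotheses, both already recorded in the discussion preceding the corollary, do the conceptual work: the condition $\Psi_h(1_B) = 1_A$ is equivalent to $l_1(x) = k_1(x)+1$ and $l_2(y) = k_2(y)+1$, i.e. to eventual one-sided conjugacy, while the condition $[\Psi_h(1_B)] = [1_A]$ in $H^A$ is by definition strong continuous orbit equivalence.

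For part (i) I would simply invoke Theorem~\ref{thm:fg}~(i) with $f = 1_B$, $g = 1_A$: it yields an isomorphism $\Phi : \OA \to \OB$ with $\Phi(\DA) = \DB$ and $\Phi \circ \rho^A_t = \rho^B_t \circ \Phi$ for all $t \in \RZ$ if and only if there is a homeomorphism $h$ realizing a continuous orbit equivalence with $\Psi_h(1_B) = 1_A$. Since the latter condition is exactly eventual one-sided conjugacy, part (i) follows with no further computation.

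For part (ii), Theorem~\ref{thm:fg}~(ii) with the same choice of $f,g$ already gives the equivalence of strong continuous orbit equivalence with the existence of $\Phi$ and a unitary one-cocycle $v_t \in \OA$ relative to $\rho^A$ satisfying $\Phi(\DA) = \DB$ and $\Phi \circ \Ad(v_t) \circ \rho^A_t = \rho^B_t \circ \Phi$. The only remaining task is to put this cocycle relation into the form stated in the corollary. Using $\Phi \circ \Ad(v_t) = \Ad(\Phi(v_t)) \circ \Phi$ and rearranging gives
\[
\Phi \circ \rho^A_t = \Ad(\Phi(v_t)^*) \circ \rho^B_t \circ \Phi ,
\]
so one sets $w_t := \Phi(v_t)^*$, which is the representation appearing in the statement. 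The main (though modest) point to verify is that $w_t$ is a genuine unitary representation of $\RZ$ \emph{into} $\DB$: as in the proof of Theorem~\ref{thm:fg}~(ii) the cocycle $v_t$ lies in $\DA$ because $\DA$ is maximal abelian in $\OA$ and $\rho^A$ fixes it, whence $v_{t+s} = v_t\,\rho^A_t(v_s) = v_t v_s$; applying the isomorphism $\Phi$ and taking adjoints in the abelian algebra $\DB$ shows $w_t \in \DB$ and $w_{t+s} = w_t w_s$. The converse direction reverses this substitution, setting $v_t := \Phi^{-1}(w_t^*) \in \DA$, which is again a unitary one-cocycle relative to $\rho^A$; this recovers the hypothesis of Theorem~\ref{thm:fg}~(ii), and part (ii) follows.
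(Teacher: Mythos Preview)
Your proposal is correct and follows exactly the approach the paper intends: the paper presents this result as an immediate corollary of Theorem~\ref{thm:fg} specialized to $f=1_B$, $g=1_A$, with the translations $\Psi_h(1_B)=1_A \Leftrightarrow$ eventual one-sided conjugacy and $[\Psi_h(1_B)]=[1_A] \Leftrightarrow$ strong continuous orbit equivalence already established in the discussion preceding the statement. Your explicit unpacking of the passage between the cocycle formulation of Theorem~\ref{thm:fg}(ii) and the unitary-representation formulation of part~(ii) (via $w_t=\Phi(v_t)^*$ and its inverse) is exactly the small bookkeeping step the paper leaves implicit.
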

\begin{remark}
Cuntz and Krieger had proved that
if 
$(X_A, \sigma_A)$ and $(X_B,\sigma_B)$
are one-sided conjugate,
then there exists an isomorphism
$\Phi:\OA \rightarrow \OB$ 
such that 
$
\Phi(\DA) = \DB
$
and
$\Phi \circ \rho^A_t = \rho^B_t \circ \Phi, 
t \in {\RZ}
$
(\cite[2.17. Proposition]{CK}).
The converse implication is seen in \cite{Tomforde}
as an open question.
By the above corollary, the open question may be rephrased 
such that whether or not
the eventual one-sided conjugacy implies one-sided conjugacy.
This seems to be open.
\end{remark}

We denote by $C(X_A,\R)$ 
the set of real valued continuous functions
on $X_A$.
Let
$\gamma^{A,f}_r\in\Aut(\OA), r \in \R$ 
be the one-parameter automorphism for $f \in C(X_A,\R)$
on $\OA$
defined by
\begin{equation}
\gamma^{A,f}_r(S_i) = \exp(\sqrt{-1}r f)S_i, \qquad i=1,\dots,N. 
\label{eq:gammaArf}
\end{equation}
For a homeomorphism $h:X_A\rightarrow X_B$ 
which gives rise to a continuous orbit equivalence 
between 
$(X_A, \sigma_A)$
and 
$(X_B, \sigma_B)$,
we define a linear map 
$\Psi_h: C(X_B, \R)\rightarrow C(X_A,\R)$
by the same formula  as in \eqref{eq:Psihfx}.
We may show the following proposition
in a similar way to Theorem \ref{thm:COETFAE}. 
\begin{proposition}\label{prop:COETFAER}
If the one-sided topological Markov shifts
$(X_A, \sigma_A)$ and $(X_B,\sigma_B)$
are continuously orbit equivalent
via a homeomorphism $h:X_A\rightarrow X_B$,
then there exists an isomorphism
$\Phi:\OA \rightarrow \OB$ such that 
\begin{equation*}
\Phi(\DA) = \DB
\quad 
\text{  and }
\quad
\Phi \circ \gamma^{A,\Psi_h(f)}_r = \gamma^{B,f}_r \circ \Phi
\quad
\text{ for }
f \in C(X_B,\R), \,\, 
 r \in \R.
\end{equation*}
\end{proposition}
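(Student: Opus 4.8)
The plan is to follow the proof of Theorem~\ref{thm:COETFAE} essentially verbatim, replacing the circle-valued unitaries $U_t(f)=\exp(2\pi\sqrt{-1}tf)$ throughout by the one-parameter unitaries $\exp\bigl(\sqrt{-1}rf\bigr)$ attached to a real-valued $f$, which again lie in the canonical diagonal subalgebra ($\DA$ or $\DB$ according to the side). First I would take the \emph{same} isomorphism $\Phi=\Ad(U_h)$, where $U_he_x^A=e_{h(x)}^B$, and normalise $k_1$ to the constant $K_1=\Max\{k_1(x)\mid x\in X_A\}$ exactly as before. The computation of $U_hS_i^AU_h^{*}=\sum_{\nu\in W^{(i)}}\sum_{\xi\in B_{K_1}(X_B)}S_\nu^B{S_\xi^B}^{*}Q_\nu^{(i)}$, together with the conclusions $\Phi(\OA)=\OB$ and $\Phi(\DA)=\DB$, involves only the homeomorphism $h$ and the orbit data $(k_1,l_1)$ and does not mention $f$ at all; these parts therefore transfer with no change.

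The substantive step is to re-run the intertwining computation with the real parameter $r$. I would first record the real-coefficient analogue of \eqref{eq:siu}, namely $S_i\exp\bigl(\sqrt{-1}rf\bigr)=\exp\bigl(\sqrt{-1}r(f\circ\sigma_A)\bigr)S_i$, which follows from $\sum_{j=1}^NS_jfS_j^{*}=f\circ\sigma_A$; this identity is valid for every $f\in C(X_A,\R)$ since the isomorphism $\DA\cong C(X_A)$ holds for all continuous, not merely integer-valued, functions. Iterating it yields the word version $\gamma_r^{A,f}(S_\mu)=\exp\bigl(\sqrt{-1}rf^n\bigr)S_\mu$ for $\mu\in B_n(X_A)$, the exact real analogue of the lemma preceding Theorem~\ref{thm:COETFAE}.

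With these identities in hand I would fix $i$, take $y\in E_\nu^{(i)}$ with $z=ih^{-1}(y)$, and carry out the same chain of equalities applied to $e_y^B$, now with $\gamma_r^{B,f}$ in place of $\rho_t^{B,f}$. The two exponential factors produced by $\gamma_r^{B,f}({S_\xi^B}^{*})$ and $\gamma_r^{B,f}(S_\nu^B)$ are $\exp\bigl(-\sqrt{-1}r\,f^{K_1}(h(\sigma_A(z)))\bigr)$ and $\exp\bigl(\sqrt{-1}r\,f^{l_1(z)}(h(z))\bigr)$; since $k_1\equiv K_1$, the formula \eqref{eq:Psihfx} reads $\Psi_h(f)(z)=f^{l_1(z)}(h(z))-f^{K_1}(h(\sigma_A(z)))$, so the product of these two factors is precisely $\exp\bigl(\sqrt{-1}r\,\Psi_h(f)(z)\bigr)$. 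This equals $U_h\gamma_r^{A,\Psi_h(f)}(S_i^A)U_h^{*}e_y^B$, which gives $\gamma_r^{B,f}\circ\Ad(U_h)=\Ad(U_h)\circ\gamma_r^{A,\Psi_h(f)}$ on each generator $S_i^A$ and hence on all of $\OA$, so that $\Phi=\Ad(U_h)$ has the asserted properties.

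I do not expect a genuine obstacle here: the proof of Theorem~\ref{thm:COETFAE} used integrality of $f$ only to guarantee that $U_t(f)$ is a \emph{unitary} lying in the diagonal and that $\Psi_h(f)$ lands in $C(X_A,\Z)$. Both features persist in the real setting, where $\exp\bigl(\sqrt{-1}rf\bigr)$ is a unitary in $\DA$ for every $f\in C(X_A,\R)$ and where $\Psi_h\colon C(X_B,\R)\to C(X_A,\R)$ is defined by the same finite-sum formula \eqref{eq:Psihfx} and is $\R$-linear. The only bookkeeping change is that $\{\gamma_r^{A,f}\}_{r\in\R}$ is a one-parameter action of $\R$ rather than of the circle $\RZ$, which affects none of the algebraic identities; the covariance is simply verified pointwise in $r$.
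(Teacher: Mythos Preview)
Your proposal is correct and matches the paper's approach exactly: the paper itself gives no separate proof, stating only that the proposition ``may [be shown] in a similar way to Theorem~\ref{thm:COETFAE},'' and you have faithfully spelled out that adaptation. Your observation that integrality of $f$ was used only to obtain a $\RZ$-action (periodicity of $t\mapsto U_t(f)$) and plays no role in the algebraic identities is precisely the point.
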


\section{Strong shift equivalence and circle actions}
It is well-known that 
a topological Markov shift
defined by a square matrix $A$ with entries in $\{0,1\}$
is naturally identified with 
a shift of finite type of the edge shift defined by 
the underlying directed graph. 
 In this section,
we consider edge shifts and hence  square matrices
with entries in nonnegative integers 
(cf.  \cite{Kitchens}, \cite{LM}, \cite{Williams}, etc.).
Such a matrix is simply called  a nonnegative square matrix.
Our hypothesis that the shift space $X_A$ is homeomorphic to 
a Cantor discontinuum forces the nonnegative square matrix
to be irreducible and not any permutation matrix.
For a nonnegative square matrix $A$,
denote by $E_A$ the edge set of the underlying graph $G_A$.
The two-sided shift space
$\bar{X}_A$  is identified with  
the two-sided sequences of concatenated edges of $E_A$.    
Two square matrices $A, B$ are said to be elementary equivalent 
if there exist  nonnegative rectangular matrices $C, D$ such that
$A = CD$ and $B=DC$ (\cite{Williams}, cf. \cite{LM}).
The graphs 
$G_A$ and $G_B$ become both bipartite graphs.
We may then take certain  bijections
$\varphi_{A,CD}$
from $E_A$ to a subset of $E_C \times E_D$
and 
$\varphi_{B,DC}$
from $E_B$ to a subset of $E_D \times E_C$.
We fix such bijections.
Through the bijections,
we may identify an edge $a$ of $E_A$ with a pair $cd$ of
edges $c \in E_C$ and $d\in E_D$,
and similarly
an edge $b$ of $E_B$ with a pair $dc$ of
edges $d \in E_D$ and $c\in E_C$.
An equivalence relation generated by elementary equivalences is called 
the strong shift equivalence. 
In \cite{Williams}, R. Williams  
has proved that two-sided topological Markov shifts
$(\bar{X}_A, \bar{\sigma}_A)$
and
$(\bar{X}_B, \bar{\sigma}_B)$
for irreducible matrices $A$ and $B$
are topologically conjugate if and only if 
the matrices $A$ and $B$ are strong shift equivalent.

For a $C^*$-algebra $\mathcal{A}$ without unit,
let $M({\mathcal{A}})$ stand for its multiplier $C^*$-algebra
defined by 
\begin{equation*}
M({\mathcal{A}}) = \{ a \in {\mathcal{A}}^{**} 
\mid a {\mathcal{A}} \subset {\mathcal{A}}, \, 
    {\mathcal{A}} a \subset {\mathcal{A}} \}. 
\end{equation*}
An action $\alpha$ of $\RZ$ on $\mathcal{A}$ always extends to 
  $M({\mathcal{A}})$ and we write the extended action still $\alpha$.
For an action $\alpha$ of $\RZ$ on ${\mathcal{A}}$,
a unitary one-cocycle $u_t, t \in \RZ$ relative to $\alpha$
is a continuous map $t \in \RZ \rightarrow u_t \in \U(M({\mathcal{A}}))$
to the unitary group  $\U(M({\mathcal{A}}))$ 
satisfying
$u_{t+s} = u_s \alpha_s(u_t), s,t \in \RZ$.
It is known that
if two matrices $A$ and $B$ are elementary equivalent,
there exists an isomorphism
$\Phi:\SOA \rightarrow \SOB$ 
and  a unitary one-cocycle
$u_t \in \U(M(\SOA))$ relative to the gauge action
$\rho^{A} \otimes\id$
such that 
\begin{equation*}
\Phi(\SDA) = \SDB
\quad
\text{ and }
\quad
\Phi \circ \Ad(u_t) \circ (\rho^{A}_t\otimes \id)
 = (\rho^{B}_t\otimes\id) \circ \Phi
\quad
\text{ for }
 t \in \RZ
\end{equation*}
(cf. \cite{CK}, \cite{MaETDS2004}, \cite{MPT}).
The proof given in \cite{MaETDS2004}
is due to a version of equivariant Morita equivalence 
(cf. \cite{BGR}, \cite{Combes}, \cite{CKRW}, \cite{RaeWill}).

In this section, 
we assume that two matrices $A$ and $B$ are elementary equivalent
such that
$A =CD$ and $B = DC$.
We set the square matrix
$Z =
\begin{bmatrix}
0 & C \\
D & 0
\end{bmatrix}
$
so that we see
\begin{equation*}
Z^2 =
\begin{bmatrix}
CD & 0 \\
0 & DC
\end{bmatrix}
=
\begin{bmatrix}
A & 0 \\
0 & B
\end{bmatrix}.
\end{equation*}
Let $\OZ$ be the Cuntz--Krieger algebra for the matrix $Z$.
Let us denote by
$ S_c, S_d, c \in E_C, d \in E_D$ 
the generating partial isometries of $\OZ$
such that 
$
\sum_{c \in E_C} S_c S_c^*
+
\sum_{d \in E_D} S_d S_d^*
=1$
and
\begin{equation*}
S_c^* S_c = \sum_{d \in E_D}Z(c,d) S_d S_d^*, \qquad
S_d^* S_d = \sum_{c \in E_C}Z(d,c) S_c S_c^*
\end{equation*}
for $c \in E_C, d \in E_D$.
We note that 
$S_c S_d \ne 0$ 
(resp. $S_d S_c \ne 0$) 
if and only if 
$\varphi_{A,CD}(a) = cd$
(resp.
$\varphi_{B,DC}(b) = dc$)
 for some $a \in E_A$
(resp. $b \in E_B$).
In this case, we identify 
$cd$ (resp. $dc$) 
with $a$ (resp. $b$)
through the map 
$\varphi_{A,CD}$
(resp. $\varphi_{B,DC}$).
We then write
$S_{cd} = S_a$
(resp. $S_{dc} = S_b$)
where $S_{cd} = S_c S_d$ (resp. $S_{dc} = S_d S_c$)
belongs to $\OZ$
whereas 
$S_a$ (resp. $S_b$) belongs to $\OA$ (resp. $\OB$).
Put
$P_C = \sum_{c \in E_C} S_c S_c^*$
and
$P_D = \sum_{d \in E_D} S_d S_d^*$
so that $P_C + P_D =1$.
It has been shown in \cite{MaETDS2004}
that 
\begin{equation}
P_C \OZ P_C = \OA, \qquad P_D \OZ P_D = \OB,\qquad
 \DZ P_C = \DA, \qquad  \DZ P_D = \DB
\end{equation}
and 
$P_C \OZ P_D$ has a natural structure of 
$\OA-\OB$ imprimitivity bimodule (\cite{Rieffel1}, \cite{Rieffel2}).
As $\DZ$ is commutative, we note that
$
P_C \DZ P_C =\DZ P_C
$
and
$P_D \DZ P_D =\DZ P_D.
$

We take and fix a function $f \in C(X_A,\Z)$.
It is regarded as an element of 
$\DA$ and hence of $\DZ$
by identifying it with 
$f\oplus 0$ in $\DA \oplus \DB = \DZ$.
Since
$
 \exp{(2\pi\sqrt{-1}t (f\oplus 0))}
 =\exp{(2\pi\sqrt{-1}t f)} +P_D
 \in \U(\DZ)$,
the automorphism
$\rho^{Z,f}_t \in \Aut(\OZ)$
for $t \in \RZ$ 
satisfies
\begin{equation}
\rho^{Z,f}_t(S_c) = \exp{(2\pi\sqrt{-1}t f)}S_c
\quad \text{ for } c \in E_C,\qquad
\rho^{Z,f}_t(S_d) = S_d
\quad \text{ for } d \in E_D.
\end{equation}
For $a \in E_A, b \in E_B$ such that
$
\varphi_{A,CD}(a) = cd,
\varphi_{B,DC}(b) = dc,$
we then have
\begin{align*}
\rho^{Z,f}_t(S_c S_d) 
& =\rho^{Z,f}_t(S_c)\rho^{Z,f}_t(S_d) 
 =\exp{(2\pi\sqrt{-1}t f)} S_c S_d 
 =\rho^{A,f}_t(S_{a}), \\ 
\rho^{Z,f}_t(S_d S_c) 
& =\rho^{Z,f}_t(S_d)\rho^{Z,f}_t(S_c) 
 = S_d \exp{(2\pi\sqrt{-1}t f)} S_c  
 = S_d \exp{(2\pi\sqrt{-1}t f)} S_d^* S_{b}.
\end{align*} 
Put 
$\phi(f) = \sum_{d\in E_D}S_d f S_d^* \in \DZ$.
Since 
$P_D \phi(f) P_D = \phi(f)$,
we see that
$\phi(f) \in \DB$ and hence $\phi(f) \in C(X_B,\Z)$ 
such that 
\begin{equation*}
\sum_{d \in E_D}S_d \exp{(2\pi\sqrt{-1}t f)}S_d^*
=
\exp{(2\pi\sqrt{-1}t\phi(f))} \in \U(\DB).
\end{equation*}
Therefore we see
\begin{equation*}
\rho^{Z,f}_t(S_d S_c)  
=\exp{(2\pi\sqrt{-1}t \phi(f))} S_{b}
=\rho^{B,\phi(f)}_t (S_{b}).
\end{equation*}
We have thus proved the following lemma.
\begin{lemma}
Suppose that 
$A = CD$ and $B =DC$.
For $f \in C(X_A,\Z)$,
there exists 
$\phi(f) \in C(X_B,\Z)$
such that 
\begin{equation}
\rho^{Z,f}_t(S_c S_d) =\rho^{A,f}_t(S_{a}), \qquad
\rho^{Z,f}_t(S_d S_c) =\rho^{B,\phi(f)}_t(S_{b}),
\qquad t \in \RZ \label{eq:rhozf}
\end{equation}
where
$a \in E_A, b \in E_B$
and
$c \in E_C, d\in E_D$
are satisfying
$
\varphi_{A,CD}(a) = cd,
\varphi_{B,DC}(b) = dc$.
\end{lemma}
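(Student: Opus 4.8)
The plan is to carry out everything inside $\OZ$, using the decomposition $1 = P_C + P_D$ and the fact that $f$, viewed as $f\oplus 0 \in \DA\oplus\DB = \DZ$, is implemented through the unitary $\exp(2\pi\sqrt{-1}t(f\oplus 0)) = \exp(2\pi\sqrt{-1}tf) + P_D$. Consequently $\rho^{Z,f}_t$ multiplies every generator $S_c$ ($c\in E_C$) on the left by $\exp(2\pi\sqrt{-1}tf)$ and fixes every generator $S_d$ ($d\in E_D$); these are exactly the two formulas already recorded. The whole lemma then amounts to tracking how this behaviour propagates through the two products $S_cS_d = S_a$ and $S_dS_c = S_b$.

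The first identity is immediate: by multiplicativity of the automorphism, $\rho^{Z,f}_t(S_cS_d) = \rho^{Z,f}_t(S_c)\,\rho^{Z,f}_t(S_d) = \exp(2\pi\sqrt{-1}tf)\,S_cS_d = \exp(2\pi\sqrt{-1}tf)\,S_a$, which is $\rho^{A,f}_t(S_a)$ by definition. The second identity is the substantive one, because $\rho^{Z,f}_t(S_dS_c) = S_d\exp(2\pi\sqrt{-1}tf)S_c$ has the scalar factor sandwiched between $S_d$ and $S_c$, and is therefore not visibly a product of a unitary in $\DB$ with $S_b$. My plan is to introduce $\phi(f) = \sum_{d\in E_D}S_d f S_d^* \in \DZ$ and to establish two facts. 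First, since $P_D\phi(f)P_D = \phi(f)$, we have $\phi(f)\in \DZ P_D = \DB$, so $\phi(f)$ is a genuine $\Z$-valued continuous function on $X_B$. Second, I would prove the functional-calculus identity $\sum_{d\in E_D}S_d\exp(2\pi\sqrt{-1}tf)S_d^* = \exp(2\pi\sqrt{-1}t\,\phi(f))$ inside $\OB = P_D\OZ P_D$.

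To finish, I would use the orthogonality $S_{d'}^* S_d = 0$ for $d'\ne d$: applied to $S_b = S_dS_c$ it kills all but one term, giving $\exp(2\pi\sqrt{-1}t\,\phi(f))S_b = S_d\exp(2\pi\sqrt{-1}tf)S_d^*S_b = S_d\exp(2\pi\sqrt{-1}tf)S_c = \rho^{Z,f}_t(S_dS_c)$, while the left-hand side is $\rho^{B,\phi(f)}_t(S_b)$ by definition. I expect the exponential identity to be the only real obstacle: it rests on checking that $a\mapsto \sum_d S_d a S_d^*$ is multiplicative on the relevant elements, which in turn uses $S_{d'}^*S_d = \delta_{d'd}S_d^*S_d$ together with the fact that $S_d^*S_d = \sum_c Z(d,c)S_cS_c^*$ lies in the commutative algebra $\DZ$ and hence commutes with $f$. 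Granting this, $\phi(f)^n = \sum_d S_d f^n S_d^*$ follows by induction, and the exponential series yields the claim term by term.
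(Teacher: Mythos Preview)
Your proposal is correct and follows essentially the same route as the paper: compute $\rho^{Z,f}_t$ on the generators $S_c,S_d$, define $\phi(f)=\sum_{d\in E_D}S_d f S_d^*\in\DB$, establish the exponential identity $\sum_d S_d\exp(2\pi\sqrt{-1}tf)S_d^*=\exp(2\pi\sqrt{-1}t\,\phi(f))$, and then use $S_{d'}^*S_d=0$ for $d'\neq d$ together with $S_d^*S_b=S_c$ to conclude. In fact you supply more justification for the exponential identity than the paper does, since the paper simply asserts it.
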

We note that the following symmetric equations to \eqref{eq:rhozf} 
\begin{equation}
\rho^{Z,g}_t(S_d S_c) =\rho^{B,g}_t(S_{b}), \qquad
\rho^{Z,g}_t(S_c S_d) =\rho^{A,\psi(g)}_t(S_{a}),
\qquad t \in \RZ \label{eq:rhozg}
\end{equation}
for 
$g \in C(X_B,\Z)$ and
$\psi(g) =\sum_{c \in E_C} S_c g S_c^* \in C(X_A,\Z)$
hold.
We then have the following lemma.
\begin{lemma}
Suppose that 
$A = CD$ and $B =DC$.
The homomorphisms
$\phi:C(X_A,\Z) \rightarrow  C(X_B,\Z)$
and
$\psi:C(X_B,\Z) \rightarrow  C(X_A,\Z)$
 defined by
\begin{equation*}
\phi(f) =\sum_{d \in E_D} S_d f S_d^*,\qquad
\psi(g) =\sum_{c \in E_C} S_c g S_c^* 
\end{equation*}
satisfy the following equalities
\begin{equation}
(\psi \circ \phi)(f) = f \circ \sigma_A,\qquad
(\phi \circ \psi)(g) = g \circ \sigma_B
\end{equation}
for $f \in C(X_A,\Z)$ and $g \in C(X_B,\Z)$.
Hence they induce isomorphisms
$\bar{\phi}:H^A\rightarrow H^B$
and
$\bar{\psi}:H^B\rightarrow H^A$
of ordered abelian groups in a natural way.
\end{lemma}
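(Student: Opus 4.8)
The plan is to prove the two composition identities by a direct computation in $\OZ$, and then to deduce the cohomological and order statements formally. For the first identity I would write
\[
(\psi\circ\phi)(f) = \sum_{c\in E_C}S_c\Big(\sum_{d\in E_D}S_d f S_d^*\Big)S_c^*
= \sum_{c\in E_C}\sum_{d\in E_D} S_cS_d\, f\, S_d^*S_c^*,
\]
and then observe that $S_cS_d\neq 0$ precisely when $cd=\varphi_{A,CD}(a)$ for some $a\in E_A$, in which case $S_cS_d=S_a$. Thus the double sum collapses to $\sum_{a\in E_A}S_a f S_a^*$, and this equals $f\circ\sigma_A$ by the identity $\sum_i S_i f S_i^*=f\circ\sigma_A$ recorded in Section~2, applied to the canonical generators $\{S_a\}_{a\in E_A}$ of $\OA=P_C\OZ P_C$. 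The identity $(\phi\circ\psi)(g)=g\circ\sigma_B$ is symmetric, using $S_dS_c=S_b$ and the generators $\{S_b\}_{b\in E_B}$ of $\OB=P_D\OZ P_D$. I expect the only genuinely delicate point here to be the combinatorial bookkeeping: one must check that the pairs $(c,d)$ with $S_cS_d\neq 0$ are in bijection with $E_A$ via $\varphi_{A,CD}$, which is exactly the identification fixed in the definition of elementary equivalence.

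Both $\phi$ and $\psi$ are additive, hence group homomorphisms, and they take values in $\Z$-valued functions by the computation preceding the lemma. To descend to $H^A$ and $H^B$, I would first establish the equivariance relations $\phi(\xi\circ\sigma_A)=\phi(\xi)\circ\sigma_B$ and $\psi(\eta\circ\sigma_B)=\psi(\eta)\circ\sigma_A$. These follow at once from the two composition identities: since $\xi\circ\sigma_A=(\psi\circ\phi)(\xi)$, applying $\phi$ and invoking $(\phi\circ\psi)(g)=g\circ\sigma_B$ with $g=\phi(\xi)$ yields
\[
\phi(\xi\circ\sigma_A)=(\phi\circ\psi)(\phi(\xi))=\phi(\xi)\circ\sigma_B.
\]
Consequently $\phi(\xi-\xi\circ\sigma_A)=\phi(\xi)-\phi(\xi)\circ\sigma_B$ is a coboundary in $C(X_B,\Z)$, so $\phi$ maps the defining subgroup of $H^A$ into that of $H^B$ and descends to a homomorphism $\bar\phi\colon H^A\to H^B$; symmetrically $\psi$ descends to $\bar\psi\colon H^B\to H^A$.

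It then remains to see that $\bar\phi$ and $\bar\psi$ are mutually inverse order isomorphisms. Inverseness comes for free from the composition identities: since $f\circ\sigma_A$ and $f$ represent the same class in $H^A$, and $g\circ\sigma_B$ and $g$ the same class in $H^B$, the relations $(\psi\circ\phi)(f)=f\circ\sigma_A$ and $(\phi\circ\psi)(g)=g\circ\sigma_B$ give $\bar\psi\circ\bar\phi=\id_{H^A}$ and $\bar\phi\circ\bar\psi=\id_{H^B}$. For the order structure I would note that $\phi(f)=\sum_{d}S_dfS_d^*$ is a finite sum of the positive maps $x\mapsto S_dxS_d^*$, hence positive whenever $f\ge 0$; as a positive $\Z$-valued element of $\DB\cong C(X_B)$ is a nonnegative function, this gives $\bar\phi(H^A_+)\subseteq H^B_+$, and symmetrically $\bar\psi(H^B_+)\subseteq H^A_+$.

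Combining the last two inclusions with the fact that $\bar\phi$ and $\bar\psi$ are mutually inverse forces $\bar\phi(H^A_+)=H^B_+$, so both are isomorphisms of ordered abelian groups. In summary, once the two composition identities of the first paragraph are secured, every subsequent assertion is formal, and so I regard the bijection bookkeeping between $E_A$ and the nonzero products $S_cS_d$ as the single substantive step of the argument.
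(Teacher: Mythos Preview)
Your proof is correct and follows essentially the same approach as the paper: the composition identities are obtained by the same direct computation collapsing the double sum over $(c,d)$ to a sum over $E_A$ via the bijection $\varphi_{A,CD}$, and then the cohomological statements are deduced formally. Your derivation of the equivariance $\phi(\xi\circ\sigma_A)=\phi(\xi)\circ\sigma_B$ from the two composition identities is in fact more explicit than the paper, which simply asserts this equality as ``easy to see'' without spelling out the argument; similarly, the paper does not spell out the positivity argument for the order structure, which you supply.
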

\begin{proof}
We have
\begin{equation*}
(\psi \circ \phi)(f) 
 = \sum_{c \in E_C} S_c (\sum_{d \in E_D} S_d f S_d^*) S_c^*
 = \sum_{a \in E_A} S_{a} f S_{a}^* 
= f \circ \sigma_A 
\quad
\text{ for } f \in C(X_A,\Z)
\end{equation*}
and
similarly 
$(\phi \circ \psi)(g) = g\circ \sigma_B
$
for
$ g \in C(X_B,\Z).$
It is easy to see that
the equality
$\phi(f - f \circ \sigma_A) = \phi(f) - \phi(f) \circ \sigma_B$
holds
so that 
$\phi:C(X_A,\Z) \rightarrow C(X_B,\Z)$ induces 
an isomorphism written
$\bar{\phi}:H^A\rightarrow H^B$
of ordered abelian groups whose inverse is 
$\bar{\psi}:H^B\rightarrow H^A$
induced by 
$\psi: C(X_B,\Z) \rightarrow C(X_A,\Z)$.
\end{proof}
We can prove the following proposition.
\begin{proposition}
Suppose that 
$A = CD$ and $B =DC$.
Then there exists an isomorphism
$\Phi:\SOA \rightarrow \SOB$ satisfying 
$\Phi(\SDA) = \SDB$
such that
for each function
$f\in C(X_A,\Z)$
there exists a unitary one-cocycle
$u_t^f \in \U(M(\SOA))$  relative to 
$\rho^{A,f} \otimes\id$
such that 
\begin{equation*}
\Phi \circ \Ad(u_t^f) \circ (\rho^{A,f}_t\otimes \id)
 = (\rho^{B,\phi(f)}_t\otimes\id) \circ \Phi
\quad
\text{ for }
 t \in \RZ.
\end{equation*}
\end{proposition}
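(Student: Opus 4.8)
The plan is to realize the entire family of identities through a single stabilized picture inside $\OZ$, using that $\OZ$ is the linking algebra of the $\OA$--$\OB$ imprimitivity bimodule $P_C\OZ P_D$ with $P_C+P_D=1$, and that this underlying Morita data does not depend on $f$. The crucial preliminary observation is that each $\rho^{Z,f}$ acts trivially on $\DZ$, so it fixes the diagonal projections $P_C,P_D\in\DZ$ and therefore preserves both corners $\OA=P_C\OZ P_C$ and $\OB=P_D\OZ P_D$. By the preceding lemma (the relations \eqref{eq:rhozf}), $\rho^{Z,f}$ restricts to $\rho^{A,f}$ on $\OA$ and to $\rho^{B,\phi(f)}$ on $\OB$, because the generators $S_a=S_cS_d$ and $S_b=S_dS_c$ generate these two corners.

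Next I would produce the $f$-\emph{independent} isomorphism $\Phi$. Since $A$ is irreducible, $\OZ$ is simple and $P_C,P_D$ are full projections, so by the stable isomorphism theorem for full corners of a $\sigma$-unital $C^*$-algebra the projections $P_C\otimes 1$ and $P_D\otimes 1$ are Murray--von Neumann equivalent in $M(\SOZ)$, where $1$ denotes the unit of $M(\mathcal{K})$. I would take an implementing partial isometry $V\in M(\SOZ)$ with $V^*V=P_C\otimes 1$ and $VV^*=P_D\otimes 1$ and set $\Phi=\Ad(V)$, which carries $\SOA=(P_C\otimes 1)(\SOZ)(P_C\otimes 1)$ onto $\SOB$. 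The delicate requirement is that $V$ be chosen so that in addition $\Phi(\SDA)=\SDB$; this diagonal-preserving construction is precisely the one carried out for the gauge action in \cite{MaETDS2004} (the special case $f=1_A$ of the present statement, where $\phi(1_A)=1_B$), and I would invoke that same $V$. I expect this diagonal-compatible stable isomorphism of the two corners to be the main obstacle; everything after it is formal.

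With $V$, hence $\Phi$, fixed once and for all, for each $f$ I would set $u_t^f:=V^*(\rho^{Z,f}_t\otimes\id)(V)$ for $t\in\RZ$. Because $\rho^{Z,f}_t\otimes\id$ fixes $P_C\otimes 1$ and $P_D\otimes 1$, the element $(\rho^{Z,f}_t\otimes\id)(V)$ is again a partial isometry with initial projection $P_C\otimes 1$ and final projection $P_D\otimes 1$; hence $u_t^f$ is a unitary in $M(\SOA)$ (with unit $P_C\otimes 1$), and $t\mapsto u_t^f$ is strictly continuous. A short computation using $V^*V=P_C\otimes 1$, $VV^*=P_D\otimes 1$, and the relation $V^*(P_D\otimes 1)=V^*$ yields the cocycle identity $u_{s+t}^f=u_s^f\,(\rho^{A,f}_s\otimes\id)(u_t^f)$, so $u^f$ is a unitary one-cocycle relative to $\rho^{A,f}\otimes\id$.

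Finally I would check the intertwining for all $f$ simultaneously. For $a\in\SOA$, using $(\rho^{A,f}_t\otimes\id)(a)=(\rho^{Z,f}_t\otimes\id)(a)$ together with the defining relations of $V$ and $u_t^f$, one computes
\begin{equation*}
\Phi\bigl(\Ad(u_t^f)((\rho^{A,f}_t\otimes\id)(a))\bigr)
=(P_D\otimes 1)(\rho^{Z,f}_t\otimes\id)(VaV^*)(P_D\otimes 1)
=(\rho^{B,\phi(f)}_t\otimes\id)(\Phi(a)),
\end{equation*}
the last equality holding because $VaV^*\in\SOB$ and $\rho^{Z,f}$ restricts to $\rho^{B,\phi(f)}$ there. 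This is exactly $\Phi\circ\Ad(u_t^f)\circ(\rho^{A,f}_t\otimes\id)=(\rho^{B,\phi(f)}_t\otimes\id)\circ\Phi$. The essential point is that $\Phi=\Ad(V)$ is built from the $f$-independent linking-algebra data, so one $\Phi$ with $\Phi(\SDA)=\SDB$ serves uniformly, while all dependence on $f$ is absorbed into the cocycle $u_t^f$.
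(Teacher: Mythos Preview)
Your proposal is correct and follows essentially the same route as the paper: the paper also works inside $\SOZ$, invokes \cite[Proposition 4.1]{MaETDS2004} to obtain diagonal-preserving isometries $v_A,v_B\in M(\SOZ)$ with $v_Av_A^*=P_C\otimes 1$, $v_Bv_B^*=P_D\otimes 1$, sets $w=v_Bv_A^*$ (which is exactly your $V$), $\Phi=\Ad(w)$, and $u_t^f=w^*(\rho^{Z,f}_t\otimes\id)(w)$, then verifies the intertwining by the same corner computation using that $\rho^{Z,f}$ restricts to $\rho^{A,f}$ and $\rho^{B,\phi(f)}$ on the two corners. The only cosmetic difference is that the paper builds $w$ as a product of two isometries rather than directly positing a single partial isometry between $P_C\otimes 1$ and $P_D\otimes 1$.
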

\begin{proof}
By \cite[Proposition 4.1]{MaETDS2004},
there exist
partial isometries
$v_{A}, v_{B} \in M(\SOZ)$
such that
\begin{equation}
v_{A}^*v_{A} = v_{B}^*v_{B} =1,
\qquad 
v_{A}v_{A}^* = P_C \otimes 1,
\qquad
v_{B}v_{B}^* = P_D \otimes 1
\end{equation}
and
\begin{equation}
\Ad(v_{A}^*):\SOA \rightarrow \SOZ
\quad
\text{ and }
\quad
\Ad(v_{B}^*):\SOB \rightarrow \SOZ
\end{equation}
are isomorphisms satisfying
\begin{equation*}
\Ad(v_{A}^*)(\SDA) = \SDZ
\quad
\text{ and }
\quad
\Ad(v_{B}^*)(\SDB) = \SDZ.
\end{equation*}
Put
$w = v_{B}v_{A}^* \in M(\SOZ)$
and
$\Phi =\Ad(w): \SOA\rightarrow\SOB$
which satisfies
$\Phi(\SDA) = \SDB$.
For
$f\in C(X_A,\Z)$,
we define
$u_t^f \in \U(M(\SOA))$ 
by setting
$u_t^f = w^*(\rho^{Z,f}_t \otimes\id)(w)$,
where 
$\rho^{Z,f}_t \otimes\id$
is extended to an automorphism of $M(\SOZ)$.
As 
$(P_C \otimes 1) v_{A} = v_A$, 
we have
\begin{equation*}
u_t^f = (P_C \otimes 1) w^*
        (\rho^{Z,f}_t \otimes\id)(w)(P_C \otimes 1) 
\end{equation*}
so that
$u_t^f \in (P_C\otimes 1)M(\SOZ)(P_C\otimes 1)$.
Since
$(P_C\otimes 1)M(\SOZ)(P_C\otimes 1) =M(\SOA)$,
we have
$u_t^f \in M(\SOA)$.
As $\rho^{A,f}_t(T) =\rho^{Z,f}_t(T)$ for $T \in \OA$,
it then follows that
for $T\otimes K\in \SOA$
\begin{align*}
[\Ad(u_t^f) \circ (\rho^{A,f}_t\otimes \id)](T\otimes K)
= & u_t^f(\rho^{A,f}_t(T)\otimes  K) (u_t^f)^* \\
= &  w^*(\rho^{Z,f}_t \otimes\id)(w)(\rho^{Z,f}_t(T)\otimes  K) 
(w^*(\rho^{Z,f}_t \otimes\id)(w))^* \\
= &  w^*(\rho^{Z,f}_t \otimes\id)(w (T\otimes  K)w^*) w. 
\end{align*}
Since
$w (T\otimes  K)w^* \in \SOB$
and $\rho^{Z,f}_t(R) =\rho^{B,\phi(f)}_t(R)$ for $R \in \OB$,
we have
\begin{align*}
w^*(\rho^{Z,f}_t \otimes\id)(w (T\otimes  K)w^*) w
= & w^*(\rho^{B,\phi(f)}_t \otimes\id)(w (T\otimes  K)w^*) w \\
= & [\Phi^{-1}\circ (\rho^{B, \phi(f)}_t \otimes\id)
    \circ \Phi](T\otimes K).
\end{align*} 
Hence we obtain
\begin{equation*}
\Ad(u_t^f) \circ (\rho^{A,f}_t\otimes \id)
=  \Phi^{-1}\circ (\rho^{B,\phi(f)}_t \otimes\id)\circ \Phi.
\end{equation*} 
\end{proof}
\begin{corollary}\label{cor:SSE}
Suppose that 
$A$ and $B$ are strong shift equivalent.
Then there exist an isomorphism
$\Phi:\SOA \rightarrow \SOB$ satisfying 
$\Phi(\SDA) = \SDB$
and a homomorphism
$\phi:C(X_A,\Z) \rightarrow C(X_B,\Z)$
of ordered groups
which induces an isomorphism
between
$H^A$ and $H^B$ of ordered groups
such that
for each function
$f\in C(X_A,\Z)$
there exists a unitary one-cocycle
$u_t^f \in \U(M(\SOA))$  
relative to 
$\rho^{A,f} \otimes\id$
satisfying 
\begin{equation*}
\Phi \circ \Ad(u_t^f) \circ (\rho^{A,f}_t\otimes \id)
 = (\rho^{B,\phi(f)}_t\otimes\id) \circ \Phi
\quad
\text{ for }
 t \in \RZ.
\end{equation*}
\end{corollary}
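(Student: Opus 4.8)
The plan is to reduce the statement to the preceding Proposition by using that, by its very definition, strong shift equivalence is the equivalence relation generated by elementary equivalences. Hence there is a finite chain of irreducible nonnegative matrices $A = A_0, A_1, \dots, A_n = B$ in which each consecutive pair is elementary equivalent, say $A_{k-1} = C_k D_k$ and $A_k = D_k C_k$. The preceding Proposition, applied to each link, furnishes an isomorphism $\Phi_k$ with $\Phi_k(\mathcal{D}_{A_{k-1}}\otimes\mathcal{C}) = \mathcal{D}_{A_k}\otimes\mathcal{C}$, a homomorphism $\phi_k : C(X_{A_{k-1}},\Z)\to C(X_{A_k},\Z)$, and for every $f$ a unitary one-cocycle realizing the intertwining relation. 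I would then set $\Phi = \Phi_n\circ\cdots\circ\Phi_1$ and $\phi = \phi_n\circ\cdots\circ\phi_1$. The ordered-group assertion is immediate: each $\phi_k$ induces an isomorphism $\bar\phi_k$ of ordered groups by the preceding Lemma, and ordered-group isomorphisms compose, so $\phi$ induces $\bar\phi_n\circ\cdots\circ\bar\phi_1 : H^A\to H^B$. Thus everything comes down to a single \emph{transitivity} step for the cocycle-intertwining data, which I then iterate along the chain.

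For that step, suppose the data $(\Phi_1,\phi_1,\{u^f\})$ intertwines between $A_{k-1}$ and $A_k$ and $(\Phi_2,\phi_2,\{w^g\})$ between $A_k$ and $A_{k+1}$, and abbreviate $\alpha^A_t = \rho^{A_{k-1},f}_t\otimes\id$, $\alpha^B_t = \rho^{A_k,\phi_1(f)}_t\otimes\id$, and $\alpha^C_t = \rho^{A_{k+1},\phi_2(\phi_1(f))}_t\otimes\id$. Solving the first relation for $\alpha^B_t$ and substituting it into the second, then collecting the inner automorphisms via $\Phi_1^{-1}\circ\Ad(w)\circ\Phi_1 = \Ad(\Phi_1^{-1}(w))$ (and using that $\Phi_1$ and its inverse extend to the multiplier algebras), one is led to
\begin{equation*}
(\Phi_2\circ\Phi_1)\circ\Ad\bigl(\Phi_1^{-1}(w_t^{\phi_1(f)})\,u_t^f\bigr)\circ\alpha^A_t = \alpha^C_t\circ(\Phi_2\circ\Phi_1), \qquad t\in\RZ .
\end{equation*}
This identifies the candidate composite cocycle $\tilde u_t^f := \Phi_1^{-1}(w_t^{\phi_1(f)})\,u_t^f \in \U(M(\SOA))$, relative to $\rho^{A_{k-1},f}\otimes\id$.

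The main obstacle, and the only point requiring genuine care, is to check that $\tilde u^f$ really is a unitary one-cocycle, i.e.\ that $\tilde u_{t+s} = \tilde u_s\,\alpha^A_s(\tilde u_t)$. The crucial input is the first intertwining relation read backwards, $\Phi_1^{-1}\circ\alpha^B_s = \Ad(u_s^f)\circ\alpha^A_s\circ\Phi_1^{-1}$, which I would apply to the term $\Phi_1^{-1}(\alpha^B_s(w_t^{\phi_1(f)}))$. Expanding $w_{t+s}$ by its cocycle identity relative to $\alpha^B$ and $u_{t+s}^f$ by its cocycle identity relative to $\alpha^A$, the conjugating unitary $u_s^f$ cancels against its own adjoint, and the surviving factors regroup precisely as $\tilde u_s\,\alpha^A_s(\tilde u_t)$; this is a short but delicate rearrangement that I expect to be the heart of the argument. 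Continuity of $t\mapsto\tilde u_t^f$ and its membership in $\U(M(\SOA))$ follow from the corresponding properties of the two given cocycles together with the fact that $\Phi_1^{-1}$ maps $M(\SOB)$ isomorphically onto $M(\SOA)$. Iterating this transitivity step along $A_0,\dots,A_n$ then produces the desired $\Phi$, $\phi$, and family $\{u^f\}$ for the pair $(A,B)$, which completes the proof.
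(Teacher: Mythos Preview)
Your proposal is correct and follows exactly the route the paper has in mind: the paper states the corollary immediately after the Proposition with no written proof, leaving the reader to iterate the elementary-equivalence step along a strong shift equivalence chain. Your argument supplies precisely the details the paper omits, in particular the verification that the composite $\tilde u_t^f := \Phi_1^{-1}(w_t^{\phi_1(f)})\,u_t^f$ satisfies the cocycle identity relative to $\rho^{A_{k-1},f}\otimes\id$; your use of the intertwining relation in the form $\Phi_1^{-1}\circ\alpha^B_s = \Ad(u_s^f)\circ\alpha^A_s\circ\Phi_1^{-1}$ is exactly what makes the $u_s^f$ and $(u_s^f)^*$ cancel, and the computation goes through as you describe.
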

We note that the above isomorphism
from $H^A$ to $H^B$ induced  by $\phi$ 
sends the class
$[1_A]$ of the constant $1$ function on $X_A$
to the class
$[1_B]$ of the constant $1$ function on $X_B$.

\section{Flow equivalence and circle actions}
For an $N \times N$ matrix
$A = [A(i,j)]_{i,j=1}^N$
with entries in $\{0,1\}$,
put
\begin{equation}
\tilde{A} =
\begin{bmatrix}
0      & A(1,1) & \cdots & A(1,N) \\
1      & 0      & \cdots & 0      \\
0      & A(2,1) & \cdots & A(2,N) \\
\vdots & \vdots &        & \vdots \\
0      & A(N,1) & \cdots & A(N,N)
\end{bmatrix}, \label{eq:tildeA}
\end{equation}
which is called the expansion of $A$ at the vertex $1$.
Let 
$\{0,1,\dots,N\}$
be the set of symbols for the topological Markov shifts $(X_{\tilde{A}},\sigma_{\tilde{A}})$
for the matrix $\tilde{A}$.
Let us denote by 
$\tilde{S}_0, \tilde{S}_1,\dots, \tilde{S}_N$
the canonical generating partial isometries of the Cuntz--Krieger algebra
${\mathcal{O}}_{\tilde{A}}$
which satisfy
$\sum_{j=0}^N
\tilde{S}_j\tilde{S}_j^* =1,
\tilde{S}_i^*\tilde{S}_i
= \sum_{j=0}^N
\tilde{A}(i,j) \tilde{S}_j\tilde{S}_j^*
$ 
for $i=0,1,\dots,N$.
Put
$
P = \sum_{i=1}^N
\tilde{S}_i\tilde{S}_i^*.
$
We note that the identities 
\begin{equation}
 \tilde{S}_1^* P \tilde{S}_1 
=\tilde{S}_1^* \tilde{S}_1 = \tilde{S}_0 \tilde{S}_0^*,\qquad
P + \tilde{S}_0 \tilde{S}_0^* =P + \tilde{S}_1^* P \tilde{S}_1 =1
\label{eq:PtildeS}
\end{equation}
hold.
We decompose the set $\N$ of natural numbers such as 
$\N = \cup_{j=1}^\infty \N_j$ and 
$\N_j = \cup_{k=0}^\infty \N_{j_k}$ as disjoint unions such that 
$\N_{j_k}$ is an infinite set for each $k, j$.  
We take and fix a canonical system  
$e_{i,j},\,  i,j \in \N$
of matrix units which generates the algebra
${\mathcal{K}}$
such that $e_{i,i}, i\in \N$ are projections of rank one in $\mathcal{C}$.
We put the projections
$
f_j = \sum_{i \in\N_j} e_{i,i}, \,
f_{j_k} = \sum_{i \in\N_{j_k}} e_{i,i}.
$
Take a partial isometry $s_{j_k,j}$ such that
$s_{j_k,j}^* s_{j_k,j} = f_j, \, s_{j_k,j} s_{j_k,j}^* = f_j.$  
\begin{lemma}
There exists a partial isometry
$w_n \in M(\SOTA),\, n=1,2,\dots $
such that 
\begin{gather*}
w_n^* w_n = 1_{\tilde{A}}\otimes f_n, \qquad
w_n w_n^* \le P \otimes f_n, \\
w_n^*({\mathcal{D}}_{\tilde{A}} P\otimes{\mathcal{C}})w_n \subset
 \SDTA,
 \qquad
 w_n(\SDTA)w_n^* \subset {\mathcal{D}}_{\tilde{A}} P\otimes{\mathcal{C}}.   
\end{gather*}
\end{lemma}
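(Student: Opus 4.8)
The plan is to write $w_n$ down by hand and then verify the four required relations by direct computation. The point of the expansion is that it introduces exactly one new symbol $0$, and identity \eqref{eq:PtildeS} says $1_{\tilde A} = P + \tilde{S}_0\tilde{S}_0^*$ while the ``extra'' summand $\tilde{S}_0\tilde{S}_0^* = \tilde{S}_1^*\tilde{S}_1$ is Murray--von Neumann equivalent, via $\tilde{S}_1$, to the subprojection $\tilde{S}_1\tilde{S}_1^* \le P$. So $w_n$ should consist of two legs: one carrying the $P$-part of $1_{\tilde A}\otimes f_n$ straight into $P$, and one folding the remaining $\tilde{S}_0\tilde{S}_0^*$-part into $P$ through $\tilde{S}_1$; the two target ranges are kept orthogonal by sending them into the orthogonal blocks $f_{n_0},f_{n_1}$ of $f_n$ using the partial isometries $s_{n_0,n},s_{n_1,n}$ (with $s_{n_k,n}^*s_{n_k,n}=f_n$ and $s_{n_k,n}s_{n_k,n}^*=f_{n_k}$). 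Concretely I would set
\[
w_n = P\otimes s_{n_0,n} + \tilde{S}_1\otimes s_{n_1,n}.
\]

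First I would check that $w_n$ is a partial isometry with the asserted source and range. The two summands have sources $P\otimes f_n$ and $\tilde{S}_1^*\tilde{S}_1\otimes f_n = \tilde{S}_0\tilde{S}_0^*\otimes f_n$, which are orthogonal since $P\perp \tilde{S}_0\tilde{S}_0^*$ by \eqref{eq:PtildeS}, and ranges $P\otimes f_{n_0}$ and $\tilde{S}_1\tilde{S}_1^*\otimes f_{n_1}$, which are orthogonal since $f_{n_0}\perp f_{n_1}$. Hence $w_n^*w_n = (P+\tilde{S}_0\tilde{S}_0^*)\otimes f_n = 1_{\tilde A}\otimes f_n$, and $w_nw_n^* = P\otimes f_{n_0} + \tilde{S}_1\tilde{S}_1^*\otimes f_{n_1}\le P\otimes f_n$ using $\tilde{S}_1\tilde{S}_1^*\le P$. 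Since $P,\tilde{S}_1\in\OTA$ and $s_{n_k,n}\in M(\mathcal{K})$, the element $w_n$ lies in $\OTA\otimes M(\mathcal{K})\subset M(\SOTA)$.

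For the two diagonal inclusions I would expand each conjugation into four terms. For a generator $d\otimes e$ with $d\in\DTA P$ and $e\in\mathcal{C}$, the two diagonal terms of $w_n^*(d\otimes e)w_n$ are $d\otimes s_{n_0,n}^*e\,s_{n_0,n}$ and $\tilde{S}_1^*d\tilde{S}_1\otimes s_{n_1,n}^*e\,s_{n_1,n}$, which lie in $\SDTA$ because $\tilde{S}_1^*\DTA\tilde{S}_1\subset\DTA$ and each $s_{n_k,n}$ preserves $\mathcal{C}$; the two cross terms vanish because their $\mathcal{K}$-factors contain $f_{n_0}e\,f_{n_1}=e\,f_{n_0}f_{n_1}=0$. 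Symmetrically, for $d'\otimes e'$ with $d'\in\DTA,e'\in\mathcal{C}$, the diagonal terms of $w_n(d'\otimes e')w_n^*$ are $Pd'P\otimes s_{n_0,n}e'\,s_{n_0,n}^*$ and $\tilde{S}_1d'\tilde{S}_1^*\otimes s_{n_1,n}e'\,s_{n_1,n}^*$, both in $\DTA P\otimes\mathcal{C}$ since $\tilde{S}_1\DTA\tilde{S}_1^*\subset\DTA P$; the cross terms vanish here for a different reason, namely $P\tilde{S}_1^*=0$ and $\tilde{S}_1 P=0$, which follow from \eqref{eq:PtildeS} via $\tilde{S}_1^*=\tilde{S}_0\tilde{S}_0^*\tilde{S}_1^*$ together with $P\tilde{S}_0\tilde{S}_0^*=0$.

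I expect the main obstacle to be arranging that the cross terms vanish in both directions simultaneously, since the two conditions require orthogonality on opposite tensor legs. Making the blocks $f_{n_0},f_{n_1}$ orthogonal is what kills the cross terms for the first inclusion but does nothing for the second; there the cancellation must come from the $\OTA$-side, and the clean vanishing $P\tilde{S}_1^*=0=\tilde{S}_1 P$ is precisely what \eqref{eq:PtildeS} supplies. Recognizing that one must split $f_n$ to separate the two ranges while at the same time relying on the source/range orthogonality built into $\tilde{S}_1$ is the real content; once the formula for $w_n$ is fixed, the remaining verifications are the routine four-term expansions indicated above.
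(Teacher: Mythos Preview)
Your proposal is correct and is essentially the paper's own proof: the paper sets $w_n = P\otimes s_{n_0,n} + P\tilde{S}_1\otimes s_{n_1,n}$, which coincides with your formula since $P\tilde{S}_1=\tilde{S}_1$, and then verifies the four relations by the same direct computations, using $\tilde{S}_1 P = 0$ (equivalently $P\tilde{S}_1^*=0$) to kill the cross terms in $w_n(\,\cdot\,)w_n^*$ and the orthogonality $f_{n_0}\perp f_{n_1}$ to kill the cross terms in $w_n^*(\,\cdot\,)w_n$.
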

\begin{proof}
Put
$w_n = P \otimes s_{n_0,n} + P \tilde{S}_1\otimes s_{n_1,n}$.
It then follows that
\begin{equation*}
w_n^* w_n 
=P \otimes f_n + \tilde{S}_1^*P \tilde{S}_1\otimes f_n
=(P+ \tilde{S}_1^*P \tilde{S}_1)\otimes f_n
= 1 \otimes f_n.
\end{equation*}
We note that 
$P \tilde{S}_1 = \tilde{S}_1$ 
and hence
$P \tilde{S}_1 P  = \tilde{S}_1 P =0$ 
 so that  
\begin{equation*}
w_n w_n^*  
 =P \otimes f_{n_0} +  P \tilde{S}_1\tilde{S}_1^* P\otimes f_{n_1} 
\le P \otimes (f_{n_0} + f_{n_1}) \le P\otimes f_n.
\end{equation*}
For $F \otimes L \in \SDTA$ with $F \in \DTA$ and $L \in {\mathcal{C}}$,
we have $P \tilde{S}_1 F P  = P F\tilde{S}_1^* P =0$ 
so that 
\begin{align*}
 w_n(F\otimes L)w_n^* 
=&  P F P \otimes s_{n_0,n} L s_{n_0,n}^*  
+ P \tilde{S}_1 F \tilde{S}_1^* P \otimes s_{n_1,n} L s_{n_1,n}^*, \\
w_n^*(P\otimes 1)(F\otimes L)(P\otimes 1)w_n
= & P F P \otimes  s_{n_0,n}^* L s_{n_0,n} 
+ \tilde{S}_1^* P F P \tilde{S}_1 \otimes s_{n_1,n}^*  L s_{n_1,n}
\end{align*}
so that 
$
 w_n(\SDTA)w_n^* \subset {\mathcal{D}}_{\tilde{A}} P\otimes{\mathcal{C}}
$
and
$
w_n^*({\mathcal{D}}_{\tilde{A}} P\otimes{\mathcal{C}})w_n \subset
 \SDTA.
$
\end{proof}
Let us denote by  $f_{n,m}$  a partial isometry satisfying 
$f_{n,m}^*f_{n,m}=f_m, f_{n,m}f_{n,m}^*=f_n.$
We set
$v_1 = w_1,\, v_{2} = (P\otimes f_1- v_{1} v_{1}^*)(P\otimes f_{1,2})$
and
\begin{equation*}
v_{2n-1} = w_n (1\otimes f_n- v_{2n-2}^* v_{2n-2}) , \quad
v_{2n} = (P\otimes f_n- v_{2n-1} v_{2n-1}^*)(P\otimes f_{n,n+1})
\quad \text{ for }
1< n \in \N.
\end{equation*}
We know that $\sum_{n=1}^\infty v_n$ converges to a partial isometry 
$v$ in the multiplier algebra $M(\SOTA)$ under the strict topology. 
By using the above lemma,
we then obtain the following lemma in a similar fashion to an argument 
giving in the proofs of 
\cite[Lemma 2.5]{Brown}
and 
\cite[Proposition 4.1]{MaETDS2004}.
\begin{lemma}
The partial isometry
$v \in M(\SOTA)$
satisfy  
\begin{gather*}
v^* v = 1_{\tilde{A}}\otimes 1, \qquad
v v^* = P \otimes 1, \\
v^*({\mathcal{D}}_{\tilde{A}} P\otimes{\mathcal{C}})v =
 \SDTA,
 \qquad
 v(\SDTA)v^* = {\mathcal{D}}_{\tilde{A}} P\otimes{\mathcal{C}}.   
\end{gather*}
\end{lemma}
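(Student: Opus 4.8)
The plan is to run the standard Murray--von Neumann packing computation, tracking the source projections $s_n = v_n^* v_n$ and range projections $r_n = v_n v_n^*$ of the building blocks by induction on $n$, and then to read off the two diagonal identities from the per-block inclusions recorded in the previous lemma via a corner argument. Using $w_n^* w_n = 1_{\tilde{A}}\otimes f_n$, the relations $f_{n,n+1}^* f_{n,n+1} = f_{n+1}$, $f_{n,n+1} f_{n,n+1}^* = f_n$, and the fact that $P\otimes f_n - v_{2n-1}v_{2n-1}^*$ and $1_{\tilde{A}}\otimes f_n - v_{2n-2}^* v_{2n-2}$ are projections dominated by $P\otimes f_n$ resp.\ $1_{\tilde{A}}\otimes f_n$, a direct calculation gives the telescoping relations
\begin{equation*}
r_{2n-1} + r_{2n} = P\otimes f_n \ (n\ge 1), \qquad
s_1 = 1_{\tilde{A}}\otimes f_1, \quad s_{2n-2} + s_{2n-1} = 1_{\tilde{A}}\otimes f_n \ (n\ge 2),
\end{equation*}
with $r_{2n-1}, r_{2n}\le P\otimes f_n$ and $s_{2n-2}, s_{2n-1}\le 1_{\tilde{A}}\otimes f_n$. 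Since $f_n f_m = 0$ for $n\ne m$ and, within each level, the two contributing projections are complementary, the families $\{r_n\}$ and $\{s_n\}$ each consist of mutually orthogonal projections; this is exactly what makes $v=\sum_n v_n$ a partial isometry. Summing over all blocks and invoking $\sum_n f_n = 1$ yields $vv^* = \sum_n r_n = P\otimes 1$ and $v^* v = \sum_n s_n = 1_{\tilde{A}}\otimes 1$.

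Next I would show, by the same induction, that each range projection $r_n$ lies in $\DTA P\otimes\mathcal{C}$ and each source projection $s_n$ lies in $\SDTA$ (note $\DTA P\otimes\mathcal{C} = (P\otimes 1)\SDTA(P\otimes 1)$ is a corner of $\SDTA$), and simultaneously that every block satisfies
\begin{equation*}
v_n^*(\DTA P\otimes\mathcal{C})v_n\subset\SDTA, \qquad
v_n(\SDTA)v_n^*\subset\DTA P\otimes\mathcal{C}.
\end{equation*}
For the odd blocks $v_{2n-1} = w_n(1_{\tilde{A}}\otimes f_n - s_{2n-2})$ this is immediate from the inclusions for $w_n$ in the previous lemma, since $1_{\tilde{A}}\otimes f_n - s_{2n-2}\in\SDTA$ by the inductive hypothesis; for the even blocks $v_{2n} = (P\otimes f_n - r_{2n-1})(P\otimes f_{n,n+1})$ it reduces to the fact that conjugation by $P\otimes f_{n,n+1}$ preserves $\DTA P\otimes\mathcal{C}$, which holds provided $f_{n,n+1}$ (and likewise $s_{n_0,n}, s_{n_1,n}$) is chosen to normalise $\mathcal{C}$, i.e.\ as a matching of the diagonal matrix units of $f_{n+1}$ and $f_n$. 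Because the $r_n$ are mutually orthogonal and lie in the commutative algebra $\DTA P\otimes\mathcal{C}$, for $a\in\DTA P\otimes\mathcal{C}$ all cross terms $v_m^* a v_n$ with $m\ne n$ vanish, so $v^* a v = \sum_n v_n^*(r_n a r_n)v_n$; the tail of this sum is norm-small because $a\in\OTA\otimes\mathcal{K}$, so it converges in the norm-closed algebra $\SDTA$, giving $v^*(\DTA P\otimes\mathcal{C})v\subset\SDTA$. Orthogonality of the $s_n$ yields $v(\SDTA)v^*\subset\DTA P\otimes\mathcal{C}$ symmetrically.

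Finally, the reverse inclusions are formal consequences of the first two identities: for $b\in\SDTA$ we have $b = (v^*v)b(v^*v) = v^*(vbv^*)v$ with $vbv^*\in\DTA P\otimes\mathcal{C}$, and for $a\in\DTA P\otimes\mathcal{C} = (vv^*)\SDTA(vv^*)$ we have $a = v(v^* a v)v^*$ with $v^* a v\in\SDTA$; hence both containments are equalities. I expect the main obstacle to be the inductive bookkeeping: keeping straight which telescoped projection sits in the full diagonal $\SDTA$ and which in the corner $\DTA P\otimes\mathcal{C}$, and checking that the matrix-unit partial isometries $f_{n,m}$ and $s_{j_k,j}$ really do normalise $\mathcal{C}$ so that the conjugations preserve the two diagonal subalgebras. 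Everything else follows the routine modelled on \cite[Lemma 2.5]{Brown} and \cite[Proposition 4.1]{MaETDS2004}.
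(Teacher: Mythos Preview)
Your proposal is correct and is precisely the argument the paper has in mind: the paper gives no detailed proof here, only the sentence ``we then obtain the following lemma in a similar fashion to an argument giving in the proofs of \cite[Lemma 2.5]{Brown} and \cite[Proposition 4.1]{MaETDS2004},'' and what you have written is exactly the standard telescoping/packing computation from those references, specialised to the present $w_n$'s and carrying along the extra diagonal-preservation bookkeeping. Your caveat that $f_{n,m}$ and $s_{j_k,j}$ must be chosen to normalise $\mathcal{C}$ is well taken---the paper leaves this implicit, but it is needed and harmless to impose.
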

We thus have
\begin{proposition}\label{prop:Phit}
Let $\tilde{A}$ be the matrix defined in \eqref{eq:tildeA}
for a matrix $A$.
Then there exists an isomorphism
$\tilde{\Phi}:\SOTA 
\rightarrow 
P{\mathcal{O}}_{\tilde{A}}P \otimes{\mathcal{K}}$ 
satisfying 
$\tilde{\Phi}(\SDTA) = 
{\mathcal{D}}_{\tilde{A}}P \otimes{\mathcal{C}}$
such that for each function
$\f\in C(X_{\tilde{A}},\Z)$
there exists a unitary one-cocycle
 $v_t^{\f} \in \U(M(\SOTA))$ relative to 
$\rho^{\tilde{A},\f} \otimes\id$
such that 
\begin{equation*}
\tilde{\Phi} \circ \Ad(v_t^{\f}) \circ (\rho^{\tilde{A},{\f}}_t\otimes \id)
 = (\rho^{\tilde{A},{\f}}_t\otimes\id) \circ \tilde{\Phi}
\quad
\text{ for }
 t \in \RZ.
\end{equation*}
\end{proposition}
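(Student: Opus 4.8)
The plan is to build $\tilde{\Phi}$ as the ampliation-corner isomorphism implemented by the partial isometry $v$ of the preceding lemma, and then to manufacture the cocycles by transporting the action $\rho^{\tilde{A},\f}\otimes\id$ across $v$, exactly in the spirit of the strong shift equivalence proposition of Section 4. First I would set $\tilde{\Phi} = \Ad(v)$, i.e. $\tilde{\Phi}(x) = vxv^*$ for $x\in\SOTA$. Since $v^*v = 1_{\tilde{A}}\otimes 1$ and $vv^* = P\otimes 1$, the map $\Ad(v)$ is a $*$-isomorphism from $\SOTA$ onto $(P\otimes 1)(\SOTA)(P\otimes 1) = P\OTA P\otimes\mathcal{K}$, and the last relation $v(\SDTA)v^* = \DTA P\otimes\mathcal{C}$ of the lemma gives $\tilde{\Phi}(\SDTA) = \DTA P\otimes\mathcal{C}$ at once. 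This settles the two structural assertions; all the content lies in the cocycle statement.

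Write $\tau_t = \rho^{\tilde{A},\f}_t\otimes\id$ for the extended action on $M(\SOTA)$. The key structural observation is that $P = \sum_{i=1}^N \tilde{S}_i\tilde{S}_i^*$ lies in $\DTA$, on which $\rho^{\tilde{A},\f}_t$ acts trivially; hence $\tau_t(P\otimes 1) = P\otimes 1 = vv^*$ for all $t$. Imitating Section 4, I would define
\[
v_t^{\f} = v^*\,\tau_t(v) = v^*\,(\rho^{\tilde{A},\f}_t\otimes\id)(v).
\]
Using $\tau_t(vv^*) = vv^*$ one checks unitarity directly: $(v_t^{\f})^*v_t^{\f} = \tau_t(v)^*(vv^*)\tau_t(v) = \tau_t(v^*\,vv^*\,v) = \tau_t(v^*v) = 1_{\tilde{A}}\otimes 1$, and $v_t^{\f}(v_t^{\f})^* = v^*(P\otimes 1)v = v^*vv^*v = 1_{\tilde{A}}\otimes 1$, so each $v_t^{\f}$ is a unitary of $M(\SOTA)$; continuity in $t$ is inherited from the action.

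For the one-cocycle law relative to $\tau = \rho^{\tilde{A},\f}\otimes\id$ I would compute
\[
v_s^{\f}\,\tau_s(v_t^{\f}) = v^*\tau_s(v)\,\tau_s(v)^*\,\tau_{s+t}(v) = v^*\,\tau_s(vv^*)\,\tau_{s+t}(v) = v^*(vv^*)\tau_{s+t}(v) = v^*\tau_{s+t}(v) = v_{t+s}^{\f},
\]
which is precisely $v_{t+s}^{\f} = v_s^{\f}\,\tau_s(v_t^{\f})$. The intertwining then reduces to the single identity $v\,v_t^{\f} = v(v^*\tau_t(v)) = (P\otimes 1)\tau_t(v) = \tau_t(vv^*)\tau_t(v) = \tau_t(v)$: for $x\in\SOTA$,
\[
(\tilde{\Phi}\circ\Ad(v_t^{\f})\circ\tau_t)(x) = (v\,v_t^{\f})\,\tau_t(x)\,(v\,v_t^{\f})^* = \tau_t(v)\,\tau_t(x)\,\tau_t(v)^* = \tau_t(vxv^*) = (\tau_t\circ\tilde{\Phi})(x),
\]
which is the asserted relation $\tilde{\Phi}\circ\Ad(v_t^{\f})\circ(\rho^{\tilde{A},\f}_t\otimes\id) = (\rho^{\tilde{A},\f}_t\otimes\id)\circ\tilde{\Phi}$.

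The only point that genuinely needs care — and the step I would treat as the main obstacle — is the invariance $\tau_t(P\otimes 1) = P\otimes 1$: it is exactly what places $v_t^{\f}$ in the unitary group of $M(\SOTA)$ and forces the cocycle identity, and it rests entirely on $P$ lying in the diagonal $\DTA$ on which $\rho^{\tilde{A},\f}_t$ is trivial. All manipulations take place in the multiplier algebra, which is legitimate because $\tau_t$ extends strictly continuously to $M(\SOTA)$ and both $v$ and $v_t^{\f}$ belong to $M(\SOTA)$.
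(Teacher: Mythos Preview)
Your proof is correct and follows essentially the same approach as the paper: both define $\tilde{\Phi}=\Ad(v)$ using the partial isometry $v$ from the preceding lemma and set $v_t^{\f}=v^*(\rho^{\tilde{A},\f}_t\otimes\id)(v)$, with the crucial input being $\tau_t(P\otimes 1)=P\otimes 1$. If anything, you supply more detail than the paper, which omits the explicit verification of unitarity and of the cocycle identity for $v_t^{\f}$.
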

\begin{proof}
Take a partial isometry
$v \in M(\SOTA)$
as in the above lemma.
Define 
$\tilde{\Phi}:\SOTA \rightarrow 
P{\mathcal{O}}_{\tilde{A}}P \otimes{\mathcal{K}}
$
by setting
$\tilde{\Phi}(T\otimes K) = v (T \otimes K)v^*$ for $T \in \OTA$
and
$K\in \mathcal{K}$.
We set 
$v_t^{\f}  = v^*(\rho_t^{\tilde{A},\f}\otimes \id) (v)$ 
for $t \in \RZ$.
It then follows that
\begin{align*}
(\rho^{\tilde{A},\f}_t\otimes \id) \circ \tilde{\Phi}(T\otimes K) 
& = (\rho^{\tilde{A},\f}_t\otimes \id) (v (T \otimes K)v^*) \\ 
& = (\rho^{\tilde{A},\f}_t\otimes \id) 
((P\otimes 1)v (T \otimes K)v^*(P\otimes 1)) \\ 
& = (P\otimes 1)(\rho^{\tilde{A},\f}_t\otimes \id) (v (T \otimes K) v^*)
    (P\otimes 1) \\ 
& = v v^*(\rho^{\tilde{A},\f}_t\otimes \id) (v) 
         (\rho^{\tilde{A},\f}_t\otimes \id)(T \otimes K)
         (\rho^{\tilde{A},\f}_t\otimes \id)(v^*) v v^* \\ 
& = \tilde{\Phi}\circ \Ad(v_t^{\f})\circ 
(\rho^{\tilde{A},\f}_t\otimes \id)(T \otimes K).
\end{align*}
Hence we have
\begin{equation*}
\tilde{\Phi} \circ \Ad(v_t^{\f}) \circ (\rho^{\tilde{A},\f}_t\otimes \id)
 = (\rho^{\tilde{A},{\f}}_t\otimes\id) \circ \tilde{\Phi}
\quad
\text{ for }
 t \in \RZ.
\end{equation*}
\end{proof}

Let
$\xi:X_A \rightarrow X_{\tilde{A}}$
be the continuous map
defined by
substituting  the symbol $1$ by the word $(1,0)$ 
in $X_A$ such as
\begin{equation}
(1,2,1,1,2,1,1,1,2, \dots) \in X_A
\rightarrow
(1,0,2,1,0,1,0,2,1,0,1,0,1,0,2, \dots) \in X_{\tilde{A}}.
\end{equation}
We define continuous functions
$k_1, l_1 \in C(X_A,\Zp)$
by setting
\begin{equation*}
k_1(x)=0,
\qquad
l_1(x)
=
\begin{cases}
2 & \text{ if } x_1 = 1, \\
1 & \text{ otherwise}
\end{cases}
\end{equation*}
for $x = (x_n)_{n \in \N} \in X_A$
so that 
$\xi:X_A \rightarrow X_{\tilde{A}}$
satisfies the identity
\begin{equation}
\sigma_{\tilde{A}}^{k_1(x)}(\xi(\sigma_A(x))) 
= \sigma_{\tilde{A}}^{l_1(x)}(\xi(x))
\quad \text{ for }
\quad 
x \in X_A. \label{eq: sigmaxi}
\end{equation}
Hence 
$\xi:X_A \rightarrow X_{\tilde{A}}$
is a continuous orbit map in the sense of 
\cite{MaPre2014}.
We define $\Psi_\xi(\f) \in C(X_A,\Z)$
for $\f \in C(X_{\tilde{A}},\Z) $
in the same formula as
\eqref{eq:Psihfx} so that
\begin{equation*}
\Psi_\xi(\f)(x) =
\begin{cases}
\f(\xi(x)) + \f(\sigma_{\tilde{A}}(\xi(x))) & \text{ if } x_1 =1, \\ 
\f(\xi(x)) & \text{ otherwise}
\end{cases}
\end{equation*}
for $x=(x_n)_{n \in \N} \in X_A$.
As in the proof of \cite[4.1 Theorem]{CK},
the partial isometries
$
\tilde{S}_1 \tilde{S}_0,
\tilde{S}_2, \dots,
\tilde{S}_N
$
satisfy the relations
\eqref{eq:CK}
so that we may regard them as the generating 
partial isometries
$S_1,S_2, \dots, S_N$
of $\OA$ and put
$
S_1 =\tilde{S}_1 \tilde{S}_0,
S_2 =\tilde{S}_2, \dots,
S_N =\tilde{S}_N.
$
\begin{lemma}[{cf. \cite[4.1 Theorem]{CK}}] \label{lem:Phi0A}
The identities
\begin{equation}
P \tilde{S}_1 \tilde{S}_0 P = S_1,
\qquad
P \tilde{S}_i P = S_i
\qquad
\text{ for } i=2,\dots,N
\end{equation}
hold and the correspondence
$
\Phi^0(P \tilde{S}_1 \tilde{S}_0 P) = S_1,
\Phi^0(P \tilde{S}_i P) = S_i, i=2,\dots,N
$
gives rise to an isomorphism 
$\Phi^0:P\OTA P \rightarrow \OA$
such that
$\Phi^0(\DTA P)= \DA$
and
\begin{equation*}
\Phi^0 \circ \rho^{{\tilde{A}},\f}_t 
=\rho^{A, \Psi_\xi(\f)}_t\circ \Phi^0
\quad
\text{ for }
\f \in C(X_{\tilde{A}},\Z),
t \in \RZ.
\end{equation*}  
\end{lemma}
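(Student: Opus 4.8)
The plan is to realize $\OA$ as the corner $P\OTA P$ exactly along the lines of \cite[4.1 Theorem]{CK}, and then to transport the gauge-type actions $\rho^{\tilde{A},\f}$ through this identification. First I would record the two algebraic identities. Using \eqref{eq:PtildeS} together with $\tilde{S}_i^*\tilde{S}_i=\sum_{j=0}^N\tilde{A}(i,j)\tilde{S}_j\tilde{S}_j^*$, one checks directly that $P\tilde{S}_i=\tilde{S}_i$ for every $i\in\{1,\dots,N\}$, that $\tilde{S}_i\tilde{S}_0=0$ for $i\ge2$ (because $\tilde{A}(i,0)=0$), and that $\tilde{S}_0\tilde{S}_0=0$ (because $\tilde{A}(0,0)=0$); these give $P\tilde{S}_iP=\tilde{S}_i=S_i$ for $i\ge2$ and $P\tilde{S}_1\tilde{S}_0P=\tilde{S}_1\tilde{S}_0=S_1$. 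Next I would verify that $S_1,\dots,S_N$ satisfy \eqref{eq:CK} for $A$ with unit $P$: the relation $\tilde{S}_1^*\tilde{S}_1=\tilde{S}_0\tilde{S}_0^*$ of \eqref{eq:PtildeS} yields $S_1S_1^*=\tilde{S}_1\tilde{S}_1^*$, hence $\sum_{i=1}^N S_iS_i^*=\sum_{i=1}^N\tilde{S}_i\tilde{S}_i^*=P$, while the same identity gives $S_1^*S_1=\tilde{S}_0^*\tilde{S}_0=\sum_{j=1}^N A(1,j)\tilde{S}_j\tilde{S}_j^*$ and $S_i^*S_i=\tilde{S}_i^*\tilde{S}_i=\sum_{j=1}^N A(i,j)\tilde{S}_j\tilde{S}_j^*$ for $i\ge2$, where in each case $\sum_j A(i,j)\tilde{S}_j\tilde{S}_j^*=\sum_j A(i,j)S_jS_j^*$.

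The heart of the argument, and the step I expect to be the main obstacle, is to show that $S_1,\dots,S_N$ generate \emph{all} of $P\OTA P$ (the inclusion $C^*(S_1,\dots,S_N)\subseteq P\OTA P$ being immediate from the identities above). Since $P=\sum_{i=1}^N\tilde{S}_i\tilde{S}_i^*$ is the projection onto the words not beginning with $0$, one has $P\tilde{S}_\mu\tilde{S}_\nu^*P=\tilde{S}_\mu\tilde{S}_\nu^*$ when $\mu_1,\nu_1\ne0$ and $=0$ otherwise, so it suffices to express each such $\tilde{S}_\mu\tilde{S}_\nu^*$ through the $S_i$. Here I would use the combinatorics of $\tilde{A}$: because $\tilde{A}(1,j)=0$ for $j\ne0$ and $\tilde{A}(i,0)=0$ for $i\ne1$, in any admissible word of $\tilde{A}$ the symbol $1$ is always immediately followed by $0$ and the symbol $0$ is always immediately preceded by $1$. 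Hence an admissible word with first symbol $\ne0$ factors uniquely into blocks, each either a single symbol $i\in\{2,\dots,N\}$ or the pair $10$, possibly followed by a trailing lone $1$; correspondingly $\tilde{S}_\mu=S_{\bar\mu}$ or $\tilde{S}_\mu=S_{\bar\mu}\tilde{S}_1$ for an admissible word $\bar\mu$ of $A$. A short case analysis on whether $\mu,\nu$ end in a lone $1$ then shows that the cross terms with exactly one trailing $1$ vanish, since $\tilde{S}_1^*\tilde{S}_1=\tilde{S}_0\tilde{S}_0^*$ is orthogonal to $\tilde{S}_{\nu_{|\nu|}}^*\tilde{S}_{\nu_{|\nu|}}$ whenever $\nu_{|\nu|}\ne1$; in the remaining cases $\tilde{S}_\mu\tilde{S}_\nu^*=S_{\bar\mu}S_{\bar\nu}^*$ or $S_{\bar\mu}(S_1S_1^*)S_{\bar\nu}^*$, using $\tilde{S}_1\tilde{S}_1^*=S_1S_1^*$. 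Thus $P\OTA P=C^*(S_1,\dots,S_N)$.

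Granting this, the isomorphism $\Phi^0$ follows from the universal property of $\OA$ and the Cuntz--Krieger uniqueness theorem (valid since $A$ satisfies condition (I)): the nonzero partial isometries $S_1,\dots,S_N$ obeying \eqref{eq:CK} determine an isomorphism $\OA\to C^*(S_1,\dots,S_N)=P\OTA P$, whose inverse is $\Phi^0$. For the diagonal statement I would note that $\xi$ is a homeomorphism of $X_A$ onto the clopen set $X_{\tilde{A}}\setminus U_0$ (where $U_0\subset X_{\tilde{A}}$ is the cylinder of words beginning with $0$), that $P\in\DTA$ corresponds to $\chi_{X_{\tilde{A}}\setminus U_0}$, and that under the block factorization the generating projections $\tilde{S}_\mu\tilde{S}_\mu^*$ of $\DTA P$ go to $S_{\bar\mu}S_{\bar\mu}^*$ (or $S_{\bar\mu}S_1S_1^*S_{\bar\mu}^*$); whence $\Phi^0(\DTA P)=\DA$, realized concretely by the identification $\xi^*$.

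Finally, for the intertwining I would use that $\rho^{\tilde{A},\f}_t$ fixes $\DTA\ni P$, so it restricts to an automorphism of $P\OTA P$, and then compute on generators. By \eqref{eq:siu} one has $\rho^{\tilde{A},\f}_t(\tilde{S}_1\tilde{S}_0)=U_t(\f+\f\circ\sigma_{\tilde{A}})\,S_1$ and $\rho^{\tilde{A},\f}_t(\tilde{S}_i)=U_t(\f)\,S_i$ for $i\ge2$. Since the range projection $S_1S_1^*=\tilde{S}_1\tilde{S}_1^*$ is supported on the cylinder of words beginning with $1$, only the values of the diagonal coefficient there are seen; matching these through $\xi$ with the explicit formula for $\Psi_\xi(\f)$, which equals $\f\circ\xi+\f\circ\sigma_{\tilde{A}}\circ\xi$ on $U_1\subset X_A$ and $\f\circ\xi$ elsewhere, shows that $\Phi^0\bigl(\rho^{\tilde{A},\f}_t(S_j)\bigr)=\rho^{A,\Psi_\xi(\f)}_t(S_j)$ for each generator $S_j$. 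As both sides are homomorphisms agreeing on generators, the identity $\Phi^0\circ\rho^{\tilde{A},\f}_t=\rho^{A,\Psi_\xi(\f)}_t\circ\Phi^0$ follows for all $t\in\RZ$.
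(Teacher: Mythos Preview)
Your proposal is correct and follows essentially the same route as the paper. The paper's proof is terser: it records $P\tilde{S}_1=\tilde{S}_1$, $\tilde{S}_0P=\tilde{S}_0$ and $P\tilde{S}_iP=\tilde{S}_i$ for $i\ge2$, then defers to \cite[4.1 Theorem]{CK} for the isomorphism $\Phi^0:P\OTA P\to\OA$ with $\Phi^0(\DTA P)=\DA$, and finally checks the intertwining on generators via the identity $U_t(\f^2)\tilde{S}_1\tilde{S}_0=U_t(\Psi_\xi(\f))S_1$; your combinatorial block-factorization argument for generation of $P\OTA P$ and your diagonal analysis through $\xi$ simply make explicit what the paper outsources to \cite{CK}.
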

\begin{proof}
Since
$P \tilde{S}_1 = \tilde{S}_1$
and
$
\tilde{S}_0 P = \tilde{S}_0,
$
we have
$P \tilde{S}_1\tilde{S}_0 P = {S}_1$.
It is easy to see that
$P \tilde{S}_i P = S_i$
for $i=2,\dots,N$.
As in the proof of \cite[4.1 Theorem]{CK}, 
the correspondence
$\Phi^0:P\OTA P \rightarrow \OA$ above
is shown to be an isomorphism satisfying 
$\Phi^0(\DTA P) = \DA$.
We note that 
$
U_t({\f}^2)\tilde{S}_1 \tilde{S}_0 
= U_t(\Psi_\xi(\f))S_1
$
which implies that
$
\Phi^0(U_t({\f}^2)\tilde{S}_1 \tilde{S}_0 )
= U_t(\Psi_\xi(\f))S_1.
$
It then follows that
\begin{align*}
\Phi^0 \circ \rho^{{\tilde{A}},\f}_t(P \tilde{S}_1 \tilde{S}_0 P)
&=\Phi^0(\rho^{{\tilde{A}},\f}_t( \tilde{S}_1 \tilde{S}_0 ))
=\Phi^0 (U_t({\f}^2)\tilde{S}_1 \tilde{S}_0 )\\
&= U_t(\Psi_\xi(\f))S_1 
 = \rho^{A,\Psi_\xi(\f)}_t(\Phi^0(P \tilde{S}_1 \tilde{S}_0 P)).
\end{align*}
Similarly
\begin{equation*}
\Phi^0 \circ \rho^{{\tilde{A}},\f}_t(P \tilde{S}_i P)
 = \rho^{A,\Psi_\xi(\f)}_t(\Phi^0(P \tilde{S}_i P)),
 \qquad 
 i=2,\dots,N.
\end{equation*}
Hence we obtain that 
$\Phi^0 \circ \rho^{{\tilde{A}},\f}_t 
=\rho^{A,\Psi_\xi(\f)}_t\circ \Phi^0$.
\end{proof}
Let 
$\eta:X_{\tilde{A}}\rightarrow X_A$
be the continuous map 
defined by deleting the symbol $0$ in elements of $X_{\tilde{A}}$
such as 
\begin{align*}
(1,0,2,1,0,1,0,2,1,0,1,0,1,0,2, \dots) \in X_{\tilde{A}}
&\rightarrow
(1,2,1,1,2,1,1,1,2, \dots) \in X_A, \\
(0,2,1,0,1,0,2,1,0,1,0,1,0,2,1, \dots) \in X_{\tilde{A}}
&\rightarrow
(2,1,1,2,1,1,1,2,1, \dots) \in X_A.
\end{align*}
We define continuous functions
$\tilde{k}_1, \tilde{l}_1 \in C(X_{\tilde{A}},\Zp)$
by setting
\begin{equation*}
\tilde{k}_1(\tilde{x})=0,
\qquad
\tilde{l}_1(\tilde{x})
=
\begin{cases}
0 & \text{ if } \tilde{x}_1 = 0, \\
1 & \text{ otherwise}
\end{cases}
\end{equation*}
for $\tilde{x} = (\tilde{x}_n)_{n \in \N} \in X_{\tilde{A}}$
so that 
$\eta:X_{\tilde{A}} \rightarrow X_A$
satisfies the identity
\begin{equation}
\sigma_A^{\tilde{k}_1(\tilde{x})}(\eta(\sigma_{\tilde{A}}(\tilde{x}))) 
= \sigma_A^{\tilde{l}_1(\tilde{x})}(\eta(\tilde{x}))
\quad \text{ for }
\quad 
\tilde{x} \in X_{\tilde{A}}. \label{eq: sigmatilde}
\end{equation}
Hence 
$\eta:X_{\tilde{A}} \rightarrow X_A$
is a continuous orbit map in the sense of 
\cite{MaPre2014}.
We define $\Psi_\eta(f)\in C(X_{\tilde{A}},\Z)$
for $f \in C(X_A,\Z)$
in the same formula as
\eqref{eq:Psihfx} so that
\begin{equation*}
\Psi_\eta(f)(\tilde{x}) =
\begin{cases}
0 & \text{ if } \tilde{x}_1 =0,\\
f(\eta(\tilde{x})) & \text{ otherwise}
\end{cases}
\end{equation*}
for $\tilde{x}=(\tilde{x}_n)_{n \in \N} \in X_{\tilde{A}}$.
We then have
\begin{lemma}\label{lem:Psietaxi}
The homomorphisms
$\Psi_\eta: C(X_A,\Z) \rightarrow C(X_{\tilde{A}},\Z)$
and
$\Psi_\xi: C(X_{\tilde{A}},\Z) \rightarrow C(X_A,\Z)$
induce isomorphisms of ordered abelian groups 
$\bar{\Psi}_\eta: H^A \rightarrow H^{\tilde{A}}$
and
$\bar{\Psi}_\xi: H^{\tilde{A}} \rightarrow H^A$,
respectively such that
they are inverses to each other.
\end{lemma}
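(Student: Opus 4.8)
The plan is to work directly from the two explicit piecewise formulas for $\Psi_\xi$ and $\Psi_\eta$ recorded just before the lemma, and to establish three things in turn: that each map descends to the ordered cohomology, that each is order-preserving, and that the two induced maps are mutually inverse. For the descent I would reuse the telescoping argument available for any continuous orbit map: since $\xi$ and $\eta$ satisfy the orbit identities \eqref{eq: sigmaxi} and \eqref{eq: sigmatilde}, feeding a coboundary $\f = \tilde b - \tilde b\circ\sigma_{\tilde A}$ into the defining sum \eqref{eq:Psihfx} makes each inner sum telescope, and the orbit identity cancels the surviving boundary terms, leaving $\Psi_\xi(\tilde b - \tilde b\circ\sigma_{\tilde A}) = \tilde b\circ\xi - (\tilde b\circ\xi)\circ\sigma_A$, again a coboundary; the same computation handles $\Psi_\eta$. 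Additivity being obvious from the formulas, this shows $\bar\Psi_\xi$ and $\bar\Psi_\eta$ are well-defined group homomorphisms. Order-preservation is then immediate: because $k_1\equiv 0$ for $\xi$ and $\tilde k_1\equiv 0$ for $\eta$, the value $\Psi_\xi(\f)(x)$ is an honest sum of values of $\f$ and $\Psi_\eta(f)(\tilde x)$ an honest sum of values of $f$, so nonnegative functions go to nonnegative functions and $\bar\Psi_\xi(H^{\tilde A}_+)\subset H^A_+$, $\bar\Psi_\eta(H^A_+)\subset H^{\tilde A}_+$.

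Next I would compute the easy composite $\Psi_\xi\circ\Psi_\eta$ and show it equals the identity on $C(X_A,\Z)$ already, before passing to cohomology. The relevant facts are that $\eta\circ\xi = \id_{X_A}$ by construction, that $\xi(x)$ never begins with the symbol $0$, and that $\sigma_{\tilde A}(\xi(x))$ begins with $0$ exactly when $x_1 = 1$. Plugging these into the two formulas, the extra summand that $\Psi_\xi$ produces at $x_1 = 1$ is annihilated because $\Psi_\eta$ vanishes on cylinders beginning with $0$, and what remains in every case is $f(\eta(\xi(x))) = f(x)$.

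The hard part will be the other composite $\Psi_\eta\circ\Psi_\xi$, where the obstacle is precisely that $\xi\circ\eta$ is not the identity. The crucial structural observation is that in $X_{\tilde A}$ the symbol $0$ can occur only immediately after the symbol $1$ (the only nonzero entry in column $0$ of $\tilde A$ is $\tilde A(1,0)$, and from the state $1$ one is forced into $0$), whence $\xi(\eta(\tilde x)) = \tilde x$ holds precisely when $\tilde x_1\neq 0$, while $\tilde x_1 = 1 \iff \tilde x_2 = 0$. Splitting on $\tilde x_1\in\{0,1,\ge 2\}$ and using these facts, I obtain $\Psi_\eta(\Psi_\xi(\f))(\tilde x)$ equal to $0$, to $\f(\tilde x)+\f(\sigma_{\tilde A}(\tilde x))$, and to $\f(\tilde x)$ in the three respective cases.

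To finish I would exhibit the correcting coboundary. Setting $b = \chi_{U_0}\f\in C(X_{\tilde A},\Z)$, supported on the cylinder $U_0 = \{\tilde x_1 = 0\}$, and noting that $b\circ\sigma_{\tilde A}$ is then supported exactly on $\{\tilde x_1 = 1\}$, a three-case check yields $\f - \Psi_\eta(\Psi_\xi(\f)) = b - b\circ\sigma_{\tilde A}$. Hence $\bar\Psi_\eta\circ\bar\Psi_\xi = \id$ on $H^{\tilde A}$, and together with the second paragraph $\bar\Psi_\xi\circ\bar\Psi_\eta = \id$ on $H^A$. Since both induced maps carry positive cone into positive cone and are mutually inverse, they are inverse isomorphisms of ordered abelian groups, as claimed.
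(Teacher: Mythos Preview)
Your proposal is correct and follows essentially the same route as the paper: both arguments reduce to the same three-case formula for $(\Psi_\eta\circ\Psi_\xi)(\tilde f)$ and correct it by the same coboundary $b=\chi_{U_0}\tilde f$ (the paper's $\tilde f_0$). The only cosmetic difference is that the paper invokes the contravariant functoriality $\Psi_\xi\circ\Psi_\eta=\Psi_{\eta\circ\xi}$, $\Psi_\eta\circ\Psi_\xi=\Psi_{\xi\circ\eta}$ and the general descent/order results for continuous orbit maps from \cite{MaPre2014}, whereas you unpack these directly from the explicit piecewise formulas; your version is thus a bit more self-contained but otherwise identical in substance.
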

\begin{proof}
Both the maps 
$\eta:X_{\tilde{A}} \rightarrow X_A$
and
$\xi:X_A \rightarrow X_{\tilde{A}}$
are continuous orbit maps
so that the homomorphisms
$\Psi_\eta: C(X_A,\Z) \rightarrow C(X_{\tilde{A}},\Z)$
and
$\Psi_\xi: C(X_{\tilde{A}},\Z) \rightarrow C(X_A,\Z)$
induce homomorphisms of ordered abelian groups 
$\bar{\Psi}_\eta: H^A \rightarrow H^{\tilde{A}}$
and
$\bar{\Psi}_\xi: H^{\tilde{A}} \rightarrow H^A$,
respectively (\cite{MaPre2014}).
It suffices to prove that they are inverses to each other.
It is easy to see that
\begin{align*}
\eta \circ \xi (x) 
&= x \quad \text{ for } x \in X_A, \\
\xi \circ \eta(\tilde{x})
& = 
{
\begin{cases}
\sigma_{\tilde{A}}(\tilde{x}) & \text{ if } \tilde{x}_1 =0, \\
\tilde{x} & \text{ otherwise }
\end{cases}
}
\quad
\text{ for }
\tilde{x} = (\tilde{x}_n)_{n \in \N} \in X_{\tilde{A}}.
\end{align*}
As in \cite[Proposition 3.4]{MaPre2014},
$\eta \rightarrow \Psi_\eta$ is contravariantly functorial,
so that we have   
\begin{align*}
(\Psi_\xi \circ \Psi_\eta) (f) 
&= \Psi_{\eta \circ \xi} (f) = f 
 \quad \text{ for } f \in C(X_A,\Z), \\
(\Psi_\eta \circ \Psi_\xi )(\f) (\tilde{x})
& =\Psi_{\xi \circ \eta}(\f)(\tilde{x})
 = 
{
\begin{cases}
0 & \text{ if } \tilde{x}_1 =0, \\
\f(\tilde{x}) + \f(\sigma_{\tilde{A}}(\tilde{x}))
 & \text{ if } \tilde{x}_1 =1, \\
\f(\tilde{x}) &  \text{ otherwise }
\end{cases}
}
\end{align*}
for 
$\tilde{x} = (\tilde{x}_n)_{n \in \N} \in X_{\tilde{A}}$.
We set
\begin{equation*}
\f_0(\tilde{x}) =
\begin{cases}
\f(\tilde{x}) & \text{ if } \tilde{x}_1 =0, \\
0  & \text{ otherwise.}
\end{cases}
\end{equation*}
It is straightforward to see that
$$
(\Psi_\eta \circ \Psi_\xi )(\f) (\tilde{x}) -\f(\tilde{x})
= \f_0(\sigma_{\tilde{A}}(\tilde{x})) - \f_0(\tilde{x})
\quad 
\text{ for  }
\tilde{x} \in X_{\tilde{A}}
$$
so that
$
(\Psi_\eta \circ \Psi_\xi )(\f) -\f
= \f_0\circ\sigma_{\tilde{A}} - \f_0
$
and hence
$
[(\Psi_\eta \circ \Psi_\xi )(\f)]
= [\f]
$
for all $\f \in C(X_{\tilde{A}},\Z)$.
\end{proof}

By combining 
Proposition \ref{prop:Phit} 
with
Lemma \ref{lem:Phi0A} and Lemma \ref{lem:Psietaxi},
we have the following proposition.
\begin{proposition} \label{prop:tildeA}
Let $\tilde{A}$ be the matrix defined by \eqref{eq:tildeA}
for a matrix $A$.
Then there exist an isomorphism
$\Phi:\SOTA 
\rightarrow 
\SOA$ 
satisfying  
$\Phi(\SDTA) = \SDA$,
a homomorphism 
$\Psi_\xi :C(X_{\tilde{A}},\Z) \rightarrow C(X_A,\Z)$
inducing an isomorphism
$\bar{\Psi}_\xi: H^{\tilde{A}} \rightarrow H^A$
of ordered abelian groups, 
and
a unitary one-cocycle
 $v_t^{\f} \in \U(M(\SOTA))$ relative to 
$\rho^{\tilde{A},\f} \otimes\id$
 for each  function
$\f\in C(X_{\tilde{A}},\Z)$
such that  
\begin{equation*}
\Phi \circ \Ad(v_t^{\f}) \circ (\rho^{\tilde{A},\f}_t\otimes \id)
 = (\rho^{A,\Psi_\xi(\f)}_t\otimes\id) \circ {\Phi}
\quad
\text{ for }
 t \in \RZ.
\end{equation*}
\end{proposition}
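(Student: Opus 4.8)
The plan is to realize the asserted isomorphism as the composition of the two maps already built in Proposition \ref{prop:Phit} and Lemma \ref{lem:Phi0A}, and then to read off the ordered-group statement from Lemma \ref{lem:Psietaxi}. Concretely, I would set $\Phi=(\Phi^0\otimes\id)\circ\tilde{\Phi}$, where $\tilde{\Phi}:\SOTA\to P\OTA P\otimes{\mathcal{K}}$ is the isomorphism of Proposition \ref{prop:Phit} and $\Phi^0:P\OTA P\to\OA$ is the isomorphism of Lemma \ref{lem:Phi0A}; since both factors are isomorphisms, so is $\Phi:\SOTA\to\SOA$. The diagonal condition then follows by chaining the two diagonal-preservation properties: Proposition \ref{prop:Phit} gives $\tilde{\Phi}(\SDTA)=\DTA P\otimes{\mathcal{C}}$ and Lemma \ref{lem:Phi0A} gives $\Phi^0(\DTA P)=\DA$, whence $(\Phi^0\otimes\id)(\DTA P\otimes{\mathcal{C}})=\DA\otimes{\mathcal{C}}=\SDA$, so that $\Phi(\SDTA)=\SDA$.

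For the cocycle identity I would reuse the very cocycle $v_t^{\f}\in\U(M(\SOTA))$ relative to $\rho^{\tilde{A},\f}\otimes\id$ supplied by Proposition \ref{prop:Phit}, which there satisfies $\tilde{\Phi}\circ\Ad(v_t^{\f})\circ(\rho^{\tilde{A},\f}_t\otimes\id)=(\rho^{\tilde{A},\f}_t\otimes\id)\circ\tilde{\Phi}$. Here it is worth recording that $P=\sum_{i=1}^N\tilde{S}_i\tilde{S}_i^*$ lies in $\DTA$ and is hence fixed by $\rho^{\tilde{A},\f}_t$, so that $\rho^{\tilde{A},\f}_t$ restricts to an automorphism of $P\OTA P$ and the right-hand side is legitimate. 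Applying $\Phi^0\otimes\id$ on the left and using the intertwining relation $\Phi^0\circ\rho^{\tilde{A},\f}_t=\rho^{A,\Psi_\xi(\f)}_t\circ\Phi^0$ of Lemma \ref{lem:Phi0A}, tensored with $\id$, to push $\Phi^0\otimes\id$ through the gauge action on the right, I obtain
\begin{equation*}
\Phi\circ\Ad(v_t^{\f})\circ(\rho^{\tilde{A},\f}_t\otimes\id)
=(\rho^{A,\Psi_\xi(\f)}_t\otimes\id)\circ\Phi
\quad\text{for } t\in\RZ,
\end{equation*}
which is exactly the stated equation.

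The remaining assertion, that $\Psi_\xi:C(X_{\tilde{A}},\Z)\to C(X_A,\Z)$ induces an isomorphism $\bar{\Psi}_\xi:H^{\tilde{A}}\to H^A$ of ordered abelian groups, is precisely Lemma \ref{lem:Psietaxi} and requires nothing further. I expect no genuine obstacle here: since all the analytic work is absorbed into the three quoted results, the task reduces to correct bookkeeping, chiefly verifying that $v_t^{\f}$ remains a one-cocycle relative to $\rho^{\tilde{A},\f}\otimes\id$ under the composition and that the gauge action descends to the corner $P\OTA P\otimes{\mathcal{K}}$, both of which are secured by the gauge-invariance of $P\in\DTA$.
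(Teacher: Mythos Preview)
Your proposal is correct and follows essentially the same approach as the paper: you define $\Phi=(\Phi^0\otimes\id)\circ\tilde{\Phi}$, chain the diagonal-preservation properties, reuse the cocycle $v_t^{\f}$ from Proposition~\ref{prop:Phit}, and then push through with the intertwining relation of Lemma~\ref{lem:Phi0A}, just as the paper does. Your added remark that $P\in\DTA$ is gauge-invariant (so that $\rho^{\tilde{A},\f}_t$ restricts to the corner) is a helpful clarification not made explicit in the paper.
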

\begin{proof}
By Lemma \ref{lem:Phi0A}
there exists an isomorphism
\begin{equation*}
\Phi^0: P\OTA P \rightarrow \OA
\quad
\text{ such that }
\quad
\Phi^0(\DTA P) = \DA
\end{equation*}
and for $ \f \in C(X_{\tilde{A}},\Z)$
\begin{equation*}
\Phi^0 \circ \rho^{\tilde{A},\f}_t = \rho^{A, \Psi_\xi(\f)}_t\circ \Phi^0.
\end{equation*}
By Proposition \ref{prop:Phit}
there exist an isomorphism
\begin{equation*}
\tilde{\Phi}:\SOTA 
\rightarrow 
P{\mathcal{O}}_{\tilde{A}}P \otimes{\mathcal{K}}
\quad
\text{  such that }
\quad
 \tilde{\Phi}(\SDTA) = 
{\mathcal{D}}_{\tilde{A}}P \otimes{\mathcal{C}}
\end{equation*}
and 
a unitary one-cocycle
 $v_t^{\f} \in \U(M(\SOTA))$ 
 relative to 
$\rho^{\tilde{A},\f} \otimes\id$
such that 
\begin{equation*}
\tilde{\Phi} \circ \Ad(v_t^{\f}) \circ (\rho^{\tilde{A},\f}_t\otimes \id)
 = (\rho^{\tilde{A},\f}_t\otimes\id) \circ \tilde{\Phi}.
\end{equation*}
We set
\begin{equation*}
\Phi= ({\Phi^0}\otimes \id) \circ {\tilde{\Phi}} :
\SOTA 
\overset{\tilde{\Phi}}{\longrightarrow}
P{\mathcal{O}}_{\tilde{A}}P \otimes{\mathcal{K}}
\overset{{\Phi^0}\otimes \id}{\longrightarrow}
\SOA
\end{equation*}
so that 
$
\Phi(\SDTA)
= \SDA
$
and for $\f \in C(X_{\tilde{A}},\Z)$, we have
\begin{align*}
\Phi\circ \Ad(v^{\f}_t) \circ (\rho^{{\tilde{A}}, \f}_t \otimes\id)
=&  ({\Phi^0}\otimes \id) \circ {\tilde{\Phi}} 
\circ \Ad(v^{\f}_t) \circ (\rho^{{\tilde{A}}, \f}_t \otimes\id) \\
=& ({\Phi^0}\otimes \id) \circ (\rho^{{\tilde{A}}, \f}_t 
\otimes\id)\circ {\tilde{\Phi}}\\
=& [({\Phi^0}\circ \rho^{{\tilde{A}},\f}_t)\otimes \id ]
\circ {\tilde{\Phi}}\\
=& [(\rho^{A,\Psi_\xi(\f)}_t\circ 
{\Phi^0})\otimes \id ]\circ {\tilde{\Phi}}\\
=& (\rho^{A,\Psi_\xi(\f)}_t\otimes \id)\circ \Phi.
\end{align*}
\end{proof}

In \cite{PS}, Parry--Sullivan proved 
that the flow equivalence relation of topological Markov shifts 
is generated by the expansions $A \rightarrow \tilde{A}$
and topological conjugacies.
Therefore 
we arrive at the following theorem.
\begin{theorem}\label{thm:FETFAE}
If two-sided topological Markov shifts
$(\bar{X}_A, \bar{\sigma}_A)$ and $(\bar{X}_B,\bar{\sigma}_B)$
are flow equivalent,
then there exist an isomorphism
$\Phi:\SOA \rightarrow \SOB$,
a homomorphism
$\varphi:C(X_A,\Z) \rightarrow C(X_B,\Z)$
of ordered abelian groups
and 
a unitary one-cocycle
 $u_t^f \in \U(M(\SOA))$ relative to 
$\rho^{A,f} \otimes\id$
for each function
$f\in C(X_A,\Z)$
satisfying the following conditions
\begin{enumerate}
\renewcommand{\theenumi}{\roman{enumi}}
\renewcommand{\labelenumi}{\textup{(\theenumi)}}
\item
$\Phi(\SDA) = \SDB$,
\item
$\Phi \circ \Ad(u_t^f) \circ (\rho^{A,f}_t\otimes \id)
 = (\rho^{B,\varphi(f)}_t\otimes\id) \circ \Phi
\text{ for }
 f \in C(X_A,\Z), \, t \in \RZ,
$
\item
$\varphi:C(X_A,\Z) \rightarrow C(X_B,\Z)$
induces 
an isomorphism
from  
$H^A$
to  $H^B$
as ordered abelian groups.
\end{enumerate}
\end{theorem}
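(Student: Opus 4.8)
The plan is to isolate the content of the two elementary moves that, by Parry--Sullivan's theorem, generate flow equivalence, and then to show that the assertion is stable under composing and inverting such moves. Call a triple $(\Phi,\varphi,\{u^f\}_{f})$ \emph{admissible} from $A$ to $B$ if $\Phi\colon\SOA\to\SOB$ is an isomorphism with $\Phi(\SDA)=\SDB$, if $\varphi\colon C(X_A,\Z)\to C(X_B,\Z)$ is a homomorphism of ordered groups inducing an isomorphism $H^A\to H^B$, and if for every $f$ the map $u^f$ is a unitary one-cocycle relative to $\rho^{A,f}\otimes\id$ satisfying condition (ii). The identity triple $(\id,\id,\{1\})$ shows the relation ``there is an admissible triple from $A$ to $B$'' is reflexive; the theorem will follow once I show this relation is transitive, symmetric, and contains the two generating moves.

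The two generating moves are already supplied by the preceding results. If $(\bar{X}_A,\bar{\sigma}_A)$ and $(\bar{X}_B,\bar{\sigma}_B)$ are topologically conjugate, then $A$ and $B$ are strong shift equivalent by Williams' theorem, and Corollary \ref{cor:SSE} gives an admissible triple from $A$ to $B$. If $\tilde{A}$ is the expansion of $A$ in \eqref{eq:tildeA}, then Proposition \ref{prop:tildeA} gives an admissible triple from $\tilde{A}$ to $A$, with $\varphi=\Psi_\xi$.

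For transitivity, suppose $(\Phi_1,\varphi_1,\{u^{1,f}\})$ is admissible from $A$ to $A'$ and $(\Phi_2,\varphi_2,\{u^{2,g}\})$ from $A'$ to $B$. I would set $\Phi=\Phi_2\circ\Phi_1$ and $\varphi=\varphi_2\circ\varphi_1$, the latter a composite of ordered isomorphisms on cohomology, and define
$$u_t^f=\Phi_1^{-1}\bigl(u_t^{2,\varphi_1(f)}\bigr)\,u_t^{1,f}.$$
Feeding the intertwining identity of the first triple into that of the second yields the intertwining identity (ii) for $(\Phi,\varphi,\{u^f\})$; this is exactly the cocycle chain rule. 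The point requiring care is that $u^f$ is again a one-cocycle relative to $\rho^{A,f}\otimes\id$: the factor $u^{1,f}$ is such a cocycle, while $\Phi_1^{-1}(u^{2,\varphi_1(f)})$ is a cocycle for the perturbed action $\Ad(u_t^{1,f})\circ(\rho^{A,f}_t\otimes\id)=\Phi_1^{-1}\circ(\rho^{A',\varphi_1(f)}_t\otimes\id)\circ\Phi_1$, so that their product again satisfies the one-cocycle identity, is strictly continuous, and lies in $\U(M(\SOA))$.

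For symmetry I would invert an admissible triple. Strong shift equivalence is symmetric, so conjugacy needs nothing new; the only case needing attention is reversing an expansion. Here I use the splitting $\Psi_\xi\circ\Psi_\eta=\id$ on $C(X_A,\Z)$ from Lemma \ref{lem:Psietaxi}: for $f\in C(X_A,\Z)$ put $\f=\Psi_\eta(f)$, so that $\Psi_\xi(\f)=f$, and rewrite the relation of Proposition \ref{prop:tildeA} to obtain an admissible triple from $A$ to $\tilde{A}$ with isomorphism $\Phi^{-1}$, ordered homomorphism $\Psi_\eta$ (the cohomology inverse of $\bar{\Psi}_\xi$ by Lemma \ref{lem:Psietaxi}), and cocycle $\Phi((v_t^{\f})^{*})$; that the adjoint of a one-cocycle is again a one-cocycle for the $\Ad$-perturbed action makes this legitimate. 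Finally, Parry--Sullivan's theorem writes a flow equivalence between $A$ and $B$ as a finite chain of expansions and topological conjugacies, and composing and inverting the generating triples along this chain produces the desired admissible triple from $A$ to $B$. I expect the main obstacle to be purely the cocycle bookkeeping in the transitivity and symmetry steps---selecting the correct products and adjoints and checking that the one-cocycle identity, strict continuity, and membership in $\U(M(\SOA))$ all survive---since the conceptual input is carried entirely by Corollary \ref{cor:SSE} and Proposition \ref{prop:tildeA}.
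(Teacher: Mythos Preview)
Your proposal is correct and follows essentially the same approach as the paper: invoke Parry--Sullivan to reduce to the two generating moves, and then apply Corollary~\ref{cor:SSE} for conjugacy and Proposition~\ref{prop:tildeA} for expansion. The paper's proof is a single sentence that leaves the composition and inversion bookkeeping implicit, whereas you spell out the cocycle chain rule for transitivity and use the identity $\Psi_\xi\circ\Psi_\eta=\id$ from Lemma~\ref{lem:Psietaxi} to reverse the expansion step---this is exactly the routine verification the paper omits, and your handling of it is sound.
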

\begin{proof}
By the Parry--Sullivan's result,
we obtain the  assertion  from 
Proposition \ref{prop:tildeA} and
Corollary \ref{cor:SSE}.
\end{proof}
Recall that 
$\gamma^{A,f}_r \in \Aut(\OA), r \in \R$ 
stand for the one-parameter 
automorphisms for 
$f \in C(X_A,\R)$ 
defined by \eqref{eq:gammaArf}. 
We may similarly show the following result:
\begin{proposition}\label{prop:FETFAER}
If two-sided topological Markov shifts
$(\bar{X}_A, \bar{\sigma}_A)$ and $(\bar{X}_B,\bar{\sigma}_B)$
are flow equivalent, 
then there exists an isomorphism
$\Phi:\SOA \rightarrow \SOB$ such that 
$\Phi(\SDA) = \SDB$
and for each function
$f\in C(X_A,\R)$
there exists
a function
$\tilde{f}\in C(X_B,\R)$
and a unitary one-cocycle
 $u_r^f \in \U(M(\SOA))$ relative to 
$\gamma^{A,f} \otimes\id$
such that 
\begin{equation*}
\Phi \circ \Ad(u_r^f) \circ (\gamma^{A,f}_r\otimes \id)
 = (\gamma^{B,\tilde{f}}_r\otimes\id) \circ \Phi
\quad
\text{ for }
 r \in \R.
\end{equation*}
\end{proposition}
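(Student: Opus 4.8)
The plan is to run the proof of Theorem~\ref{thm:FETFAE} verbatim in the real-valued category, replacing throughout each $\Z$-valued function by an $\R$-valued one and each gauge-type circle action $\rho^{A,f}$ by the one-parameter automorphism group $\gamma^{A,f}$ of \eqref{eq:gammaArf}; concretely, every occurrence of the unitary $U_t(f)=\exp(2\pi\sqrt{-1}tf)$ is replaced by $\exp(\sqrt{-1}rf)$, $r\in\R$. By the Parry--Sullivan theorem \cite{PS}, the flow equivalence of $(\bar{X}_A,\bar{\sigma}_A)$ and $(\bar{X}_B,\bar{\sigma}_B)$ is generated by elementary equivalences (equivalently topological conjugacies, by Williams' theorem \cite{Williams}) and expansions $A\rightarrow\tilde{A}$, so it suffices to establish the real analogues of Corollary~\ref{cor:SSE} and Proposition~\ref{prop:tildeA} and then compose along the resulting finite chain.

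For an elementary equivalence $A=CD$, $B=DC$, first I would set $\phi(f)=\sum_{d\in E_D}S_dfS_d^{*}$, which now lies in $C(X_B,\R)$, and redo the computation preceding Corollary~\ref{cor:SSE} with $\gamma^{Z,f}_r(S_c)=\exp(\sqrt{-1}rf)S_c$ and $\gamma^{Z,f}_r(S_d)=S_d$; this reproduces $\gamma^{Z,f}_r(S_cS_d)=\gamma^{A,f}_r(S_a)$ and $\gamma^{Z,f}_r(S_dS_c)=\gamma^{B,\phi(f)}_r(S_b)$ word for word. The partial isometries $v_A,v_B\in M(\SOZ)$ of \cite[Proposition 4.1]{MaETDS2004} and $w=v_Bv_A^{*}$ then give $\Phi=\Ad(w)$ with $\Phi(\SDA)=\SDB$, and $u_r^f=w^{*}(\gamma^{Z,f}_r\otimes\id)(w)\in\U(M(\SOA))$ is a one-cocycle relative to $\gamma^{A,f}\otimes\id$ satisfying $\Phi\circ\Ad(u_r^f)\circ(\gamma^{A,f}_r\otimes\id)=(\gamma^{B,\phi(f)}_r\otimes\id)\circ\Phi$.

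For an expansion $A\rightarrow\tilde{A}$, the orbit-cocycle map $\Psi_\xi$ of Lemma~\ref{lem:Phi0A} is given by the formula \eqref{eq:Psihfx} and so extends unchanged to $\Psi_\xi\colon C(X_{\tilde{A}},\R)\rightarrow C(X_A,\R)$; the identity $\exp(\sqrt{-1}r\,\f^{2})\tilde{S}_1\tilde{S}_0=\exp(\sqrt{-1}r\,\Psi_\xi(\f))S_1$ yields $\Phi^0\circ\gamma^{\tilde{A},\f}_r=\gamma^{A,\Psi_\xi(\f)}_r\circ\Phi^0$ exactly as in Lemma~\ref{lem:Phi0A}. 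The multiplier partial isometry $v\in M(\SOTA)$ of Proposition~\ref{prop:Phit} does not depend on the function, so setting $v_r^{\f}=v^{*}(\gamma^{\tilde{A},\f}_r\otimes\id)(v)$ and composing $\Phi^0\otimes\id$ with $\tilde{\Phi}=\Ad(v)$ reproduces Proposition~\ref{prop:tildeA} with $\rho$ replaced by $\gamma$.

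Finally I would assemble the chain: taking $\Phi$ to be the composite isomorphism, $\varphi$ the corresponding composite of the maps $\phi$ and $\Psi_\xi$ (and their inverses, where a step runs in the opposite direction) and $\tilde{f}=\varphi(f)$, the total cocycle is obtained by the usual chain rule for one-cocycles under composition of equivariant isomorphisms, precisely as the $\Z$-valued cocycle is built in Theorem~\ref{thm:FETFAE}. I expect the only point requiring genuine verification, rather than mere symbol substitution, to be the functional-calculus identity $\sum_{d\in E_D}S_d\exp(\sqrt{-1}rf)S_d^{*}=\exp(\sqrt{-1}r\phi(f))$ together with its expansion counterpart; these rest only on the multiplicativity of $T\mapsto S_dTS_d^{*}$ on the corner $S_dS_d^{*}\DZ S_dS_d^{*}$ and the orthogonality of the ranges of the $S_d$, neither of which uses the integrality of $f$, so they pass unchanged from $\Z$ to $\R$. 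The mild bookkeeping of composing cocycles across the Parry--Sullivan chain is identical to the integer-valued case and presents no new difficulty.
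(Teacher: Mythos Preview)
Your proposal is correct and is precisely the approach the paper intends: the paper offers no separate argument for Proposition~\ref{prop:FETFAER} beyond the sentence ``We may similarly show the following result,'' meaning one is to rerun the proof of Theorem~\ref{thm:FETFAE} with $\R$-valued functions and the actions $\gamma^{A,f}$ in place of $\rho^{A,f}$, exactly as you have outlined. Your identification of the one nontrivial check---the functional-calculus identity for $\phi(f)$ and its expansion analogue---and your observation that it nowhere uses integrality are both on point.
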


{\it Acknowledgments:}
The author would like to thank the referee for his careful reading the manuscript
and various suggestions. 
This work was supported by JSPS KAKENHI Grant Numbers 
23540237 and 15K04896.



\begin{thebibliography}{99}






















\bibitem{BH}
{\sc M. Boyle and D. Handelman},
{\it Orbit equivalence, flow equivalence and ordered cohomology},
Israel J.\  Math.\
{\bf 95}(1996), pp.\ 169--210.

\bibitem{BF} {\sc R. Bowen and J. Franks},
{\it Homology for zero-dimensional nonwandering sets},
Ann.\  Math.\ {\bf 106}(1977), pp.\ 73--92.





\bibitem{Brown}
{\sc L. G. Brown},
{\it Stable isomorphism of hereditary subalgebras of $C^*$-algebras},
Pacific J.\ Math.\ {\bf 71}(1977), pp.\ 335--348.



\bibitem{BGR}
{\sc L. G. Brown, P. Green and M. A. Rieffel},
{\it Stable isomorphism and strong Morita equivalence of $C^*$-algebras},
 Pacific  J.\ Math.\ {\bf 71}(1977), pp.\ 349--363.



\bibitem{Combes}
{\sc F. Combes},
{\it Crossed products and Morita equivalence},
Proc.\ London Math.\ Soc.\
{\bf 49}(1984), pp.\ 289--306.

\bibitem{CKRW}
{\sc  D. Crocker, A. Kumjian, I. Raeburn and D. P. Williams},
{\it An equivariant Brauer group and actions of groups of $C^*$-algebras},
J.\ Funct.\ Anal.\ {\bf 146}(1997), pp.\ 151--184.
























\bibitem{Cu2}{\sc J. Cuntz},
{\it Automorphisms of certain simple $C^*$-algebras},
 Quantum Fields-Algebras, Processes, Springer Verlag, Wien-New York, 1980,
pp.\ 187--196. 








\bibitem{CK}{\sc J. ~Cuntz and W. ~Krieger},
{\it A class of $C^*$-algebras and topological Markov chains},
 Invent.\ Math.\
 {\bf 56}(1980), pp.\ 251--268.





%

\bibitem{Franks}{\sc J. Franks},
{\it Flow equivalence of subshifts of finite type},
Ergodic Theory Dynam. Systems {\bf 4}(1984), pp.\ 53--66.



















\bibitem{Kitchens}{\sc B.~P. ~Kitchens},
{\it Symbolic dynamics}, 
Springer-Verlag, Berlin, Heidelberg and New York (1998).






\bibitem{LM}{\sc D. ~Lind and B. ~Marcus},
{\it An introduction to symbolic dynamics and coding},
 Cambridge University Press, Cambridge
(1995).









\bibitem{MaJOT2000} {\sc K. Matsumoto},
{\it On automorphisms of $C^*$-algebras associated with subshifts},
J. Operator Theory {\bf 44}(2000), pp.\  91--112.

\bibitem{MaETDS2004} {\sc K. Matsumoto},
{\it Strong shift equivalence of symbolic dynamical systems 
and Morita equivalence of $C^*$-algebras},
Ergodic Theory Dynam. Systems {\bf 24}(2002), pp.\ 199--215.


\bibitem{MaPacific}
{\sc K. Matsumoto},
{\it Orbit equivalence of topological Markov shifts and Cuntz--Krieger algebras},
Pacific J.\ Math.\ 
{\bf 246}(2010), 199--225.













\bibitem{MaPAMS}
{\sc K. Matsumoto},
{\it  Classification of Cuntz--Krieger algebras by orbit equivalence of topological Markov shifts},
Proc. Amer. Math. Soc.
{\bf 141}(2013), pp.\ 2329--2342.




\bibitem{MaPre2014}
{\sc K. Matsumoto},
{\it Strongly continuous orbit equivalence of 
one-sided topological Markov shifts},
J. Operator Theory 
{\bf 74}(2015), pp.\ 101--127.

\bibitem{MMKyoto}
{\sc K. Matsumoto and H. Matui},
{\it Continuous orbit equivalence of topological Markov shifts 
and Cuntz--Krieger algebras},
Kyoto J. Math.
{\bf 54}(2014), pp.\ 863--878.


\bibitem{MMPre2014}
{\sc K. Matsumoto and H. Matui},
{\it Continuous orbit equivalence of topological Markov shifts 
and dynamical zeta functions}, preprint, arXiv:1403.0719,
to appear in Ergodic Theory Dynam. Systems.




\bibitem{MatuiPLMS}
{\sc H. Matui}, 
{\it Homology and topological full groups of {\'e}tale groupoids on totally disconnected spaces},
 Proc. London Math. Soc. {\bf 104}(2012), 
 pp.\ 27--56.

\bibitem{MatuiPre2012}
{\sc H. Matui}, 
{\it Topological full groups of one-sided shifts of finite type},
J. Reine Angew. Math.
{\bf 705}(2015), pp. \ 35--84.





\bibitem{MPT}
{\sc P. S. Muhly, D.  Pask and M. Tomforde},
{\it Strong shift equivalence of $C^*$-correspondences},
Israel J. Math. {\bf 167} (2008), pp. \ 315--346. 


\bibitem{PS}
{\sc W. Parry and D. Sullivan},
{\it A topological invariant for flows on one-dimensional spaces},
Topology
 {\bf 14}(1975), pp.\ 297--299.









\bibitem{Po}
{\sc Y. T. Poon},
{\it A K-theoretic invariant for dynamical systems}, 
Trans.\  Amer.\ Math.\ Soc.\ 
{\bf 311}(1989), pp.\ 513--533.









\bibitem{RaeWill}  
{\sc I. Raeburn and D. P. Williams},
{\it Morita equivalence and continuous-trace $C^*$-algebras},
Mathematical Surveys and Monographs, vol(60)
Amer. Math. Soc, (1998).



\bibitem{Renault}{\sc J. Renault},
{\it A groupoid approach to $C^*$-algebras},
Lecture Notes in Math.  793, 
Springer-Verlag, Berlin, Heidelberg and New York (1980).


\bibitem{Rieffel1}
{\sc M. A. Rieffel},
{\it Induced representations of $C^*$-algebras},
Adv.\ in Math.\
{\bf 13}(1974), pp. 176--257.


\bibitem{Rieffel2}
{\sc M. A. Rieffel},
{\it  Morita equivalence for  $C^*$-algebras and  $W^*$-algebras},
J.\ Pure Appl.\ Algebra {\bf 5}(1974), pp.\ 51--96.  











\bibitem{Ro}
{\sc M. R{\o}rdam},
{\it Classification of Cuntz-Krieger algebras},
 K-theory {\bf 9}(1995), pp.\  31--58.






\bibitem{Tomforde}{\sc M. Tomforde}
{\it The Graph Algebra Problem Page : List of Open Problems},
http://www.math.uh.edu/tomforde/GraphAlgebraProblems/ListOfProblems.html.


















\bibitem{Williams} {\sc R. F. Williams},
{\it Classification of subshifts of finite type}, 
 Ann.\ Math.\  {\bf 98}(1973), pp.\ 120--153.
 erratum, Ann.\ Math.\
{\bf 99}(1974), pp.\ 380--381.

\end{thebibliography}
\end{document}